\documentclass[notitlepage,reqno,11pt]{amsart}
\usepackage{latexsym,amssymb, epsfig, amsmath,amsfonts, subfigure,amsthm}

\usepackage{rotating}
\usepackage[toc,page]{appendix}
\usepackage{color}
\usepackage{multirow}
\usepackage{relsize}
\usepackage{microtype}

\usepackage[colorlinks, allcolors=blue]{hyperref}

\usepackage[margin=1in]{geometry}


\raggedbottom

\numberwithin{equation}{section}
 
\newtheorem{lemma}{Lemma}[section]
\newtheorem{theorem}{Theorem}[section]

\newtheorem{prop}{Proposition}[section]
\newtheorem{remark}{Remark}[section]

\newlength{\defbaselineskip}
\setlength{\defbaselineskip}{\baselineskip}
\newcommand{\setlinespacing}[1]%
           {\setlength{\baselineskip}{#1 \defbaselineskip}}

\newcommand{\R}{{\mathbb R}}
\newcommand{\Z}{{\mathbb Z}}

\def\E{\mathbb{E}}
\def\P{\mathbb{P}}

\newcommand{\sF}{{\mathcal{F}}}
\newcommand{\sI}{{\mathcal{I}}}

\newcommand{\sK}{{\mathcal{K}}}
\newcommand{\sL}{{\mathcal{L}}}

\newcommand{\ep}{\varepsilon}

\newcommand{\tT} {\sigma^N_{_T}}

\newcommand{\bD}{{\mathbf D}}

\newcommand{\bC}{{\mathbf C}}

\newcommand{\bone}{{\mathbf 1}}

\newcommand{\qinq}{\quad\mbox{in}\quad}
\newcommand{\qforallq}{\quad\mbox{for all}\quad}

\newcommand{\non}{\nonumber}

\newcommand{\ttl}{\Large Multi-patch multi-group epidemic model with varying infectivity}

\begin{document}

\title[]{\ttl}
\author[Rapha\"el \ Forien]{Rapha\"el Forien}
\address{INRAE, Centre INRAE PACA, Domaine St-Paul - Site Agroparc
84914 Avignon Cedex
France}
\email{raphael.forien@inrae.fr}

\author[Guodong \ Pang]{Guodong Pang}
\address{Department of Computational Applied Mathematics and Operations Research,
George R. Brown College of Engineering,
Rice University,
Houston, TX 77005}
\email{gdpang@rice.edu}

\author[{\'E}tienne \ Pardoux]{{\'E}tienne Pardoux}
\address{Aix Marseille Univ, CNRS, I2M, Marseille, France}
\email{etienne.pardoux@univ-amu.fr}

\date{\today}

\begin{abstract} 
This paper presents a law of large numbers result, as the size of the population tends to infinity, of SIR stochastic epidemic models,
for a population distributed over $L$ distinct patches (with migrations between them) and $K$  distinct groups (possibly age groups). 
The limit is a set of Volterra-type integral equations, and the result shows the effects of both spatial and population heterogeneity. 
The novelty of the model is that the infectivity of an infected individual is infection age dependent. More precisely, to each  infected individual is attached a random infection-age dependent infectivity function, such that the various random functions attached to distinct individuals are i.i.d.

The proof involves a novel construction of a sequence of i.i.d. processes to invoke the law of large numbers for processes in $\bD$, 
by using the solution of a MacKean-Vlasov type Poisson-driven stochastic equation (as in the propagation of chaos theory). 
We also establish an identity using the Feynman-Kac formula for an adjoint backward ODE. The advantage of this approach is that it assumes much weaker conditions on the random infectivity functions than our earlier work for the homogeneous model  in \cite{FPP2020b}, where standard tightness criteria for convergence of stochastic processes were employed. 
To illustrate this new approach,  we first explain the new proof under the weak assumptions for the homogeneous model, and then describe the multipatch-multigroup model and prove the law of large numbers for that model.  


\end{abstract}

\keywords{stochastic epidemic model, multi-patch multi-group, varying infectivity, SIR model, law of large numbers, Feynman-Kac formula, Poisson-driven (McKean-Vlasov type) stochastic equations, propagation of chaos}

\maketitle



\allowdisplaybreaks

\section{Introduction}
It is well--known that ODE epidemic models are law of large numbers (LLN) limits, as the size of the population 
tends to infinity, of individual based stochastic Markov models, see e.g. \cite{andersson2012stochastic} and Chapter 2 of Part I in \cite{britton2018stochastic}. The Markov property of such stochastic epidemic models requires in particular that the duration of the infectious period of the individuals follows an exponential distribution.
For epidemic models with non--exponential infectious periods, LLNs have been derived  using different methods, see \cite{reinert1995asymptotic} for the SIR model, and  \cite{wang1975limit} for the case of a non--Markov population model. 
Related models of age-structured populations have been studied before.
In \cite{oelschlager_limit_1990}, Oelschl\"ager studies an age-structured birth and death process in which the birth and death rates depend on the age structure of the whole population, and proves a law of large numbers and a central limit theorem for the empirical distribution of the age of individuals in the population.
Similar results have been proved for more general models in \cite{tran_large_2008,meleard_slow_2012} and \cite{hamza_age_2013,fan_convergence_2020}.
Recently, in \cite{PP-2020}, the last two authors have shown the LLN results for various non-Markov models (including SIR, SEIR, SIS, SIRS models) with the infectious periods having any general distribution. The limits in these non-Markov models are systems of Volterra-type integral equations. Of course, without proving LLNs,  Volterra integral equations were already used to describe the epidemic dynamics in the literature, see, for example, \cite{brauer1975nonlinear,cooke1976epidemic,diekmann1977limiting,hethcote1995sis,van2000simple,feng2007epidemiological}.

On the other hand, not only should the infectious period be non-exponentially distributed, but, as was advocated as early as 1927 by Kermack and McKendrick in \cite{KMK}, the infectivity should be allowed to vary with the time elapsed since infection. 
A deterministic integral equation was also developed to describe the age of infection in epidemic models, see  \cite{brauer2008age} and  \cite[Chapter 4.5]{BCF-2019}. 
In \cite{FPP2020b}, the authors have obtained the Kermack--McKendrick model as the large population LLN limit of a stochastic model, where to each infectious individual is attached an independent copy of a random infection age dependent random infectivity function. The last two authors have established a central limit theorem in the same framework, see \cite{PP2020-FCLT-VI}. Furthermore, by tracking the age of infection of all the individuals in that framework, the last two authors in  \cite{PP-2020c} have also established the LLN with the limit being a system of first order partial differential equations (PDEs) with an integral equation as boundary condition, which is consistent with the Kermack and McKendrick PDE model introduced in 1932 \cite{KM32}. 


The present paper contains two novelties. First, the techniques employed in \cite{FPP2020b} for the SIR model with varying infectivity to prove the law of large numbers require that each infectivity function be uniformly bounded and satisfy a regularity condition as imposed in \cite[Assumption 2.1]{FPP2020b}. Specifically, the infectivity function is assumed to have at most finitely many jumps, and to satisfy a uniform continuity assumption between the jumps. 
These conditions were necessary in the proof of convergence of the aggregate infectivity process 
in the space $\bD$ of real-valued c{\`a}dl{\`a}g functions, 
 using standard tightness criteria via the conditions on the modulus of continuity as stated in \cite{billingsley1999convergence} (see the specific criteria used in Lemmas 4.3--4.5 in \cite{FPP2020b}). In fact, the
same technique fails to work for a model with both infection age varying infectivity and recovery age varying susceptibility in \cite{FPPZ}. 
Rather, the only approach with which we were able to prove the result in that case was using a comparison of the model with a sequence of i.i.d. processes, which is easily shown to converge thanks to the law of large numbers for processes in $\bD$, cf. \cite{rao1963law}. The construction of the law of those processes involves the solution of a McKean--Vlasov type Poisson driven stochastic equations
(see equation \eqref{SDE-MKV} for the homogeneous model and equation \eqref{eq:McKV}  for the multi-patch multi-group model), 
as in the ``propagation of chaos'' theory, see Sznitman \cite{sznitman1991topics}. Note that our result can in fact be interpreted as a propagation of chaos result, and that the general approach of the proof is inspired by the work of Chevallier \cite{chevallier_mean-field_2017}. The advantage of this new approach is that the proof requires much weaker assumptions on the infection age dependent infectivity functions than those needed in our previous paper \cite{FPP2020b}: namely, we only assume that those functions belong a.s. to $\bD$ and are uniformly bounded. We first present this new proof of the LLN result from \cite{FPP2020b} for the homogeneous model with varying infectivity in Section  \ref{sec:hom}. 

The main goal (and the second novelty) of the present paper is to adapt this approach to a multipatch-multigroup model. In this model, the population is divided into groups, which are mainly thought of as age groups, and into patches, which are distinct geographical areas. 
Individuals  remain in the same group during the epidemic, while they may move from one patch to another.  
Infectious individuals infect susceptible ones from other groups, but also from other patches. 
Epidemic models with multiple types or groups have been extensively studied, see, e.g., \cite{ball1992final,becker1996immunization,diekmann1990definition,hethcote1985stability,huang1992stability}, as well as models with different geographic areas or patches, see, e.g., \cite{sattenspiel1995structured,allen2007asymptotic,xiao2014transmission,PP-2020b}. 
In \cite{ball2001stochastic}, a stochastic SIR model with multitype individuals that are partitioned into households where infections occur within a household (locally) and between households (globally) is considered. 
A multipatch-multigroup epidemic model was recently studied in \cite{bichara2018multi}, which focuses on the ODEs for the Markovian SEIRS model and its global stability property. Our work is also motivated by the study in Britton et al. \cite{britton2020mathematical} on the influence of population heterogeneity on herd immunity in the recent Covid-19 pandemic. 
Our model with multiple patches and groups captures both spatial and population heterogeneity. 
The groups could partition the population according to various levels of social activity.

In our model, the infection rate is assumed to take a very general form, as given in formula \eqref{eqn-Upsilon}. It allows a different  infectivity rate from each group and patch to others, thus including in particular non-local infections. 
The main reason for allowing non-local infections is that the propagation of an epidemic from one patch to another is partly due to movements of individuals going from home to work and back, as well as those who  visit a given place during holidays or weekends, and then return home. 
These movements cannot be conveniently modeled as migrations, and their effect on the epidemic are infections at distance.

The multi-patch SEIR model with a homogeneous population in each patch but with a constant infectivity rate was recently considered in \cite{PP-2020b}, where both the LLNs and functional central limit theorems (FCLTs) are established. Note that the techniques for the proof of convergence in \cite{PP-2020b} also use the standard tightness criteria as discussed above, and as in \cite{PP-2020} do not require any condition on the exposed and infectious period distributions  (except that the distributions for the initially infected individuals are continuous for the FCLT). In the present paper,  since the infectious periods are induced from the random infectivity functions, for which no regularity conditions are imposed, the distributions of the infectious periods are also completely general. 

The main result for the multipatch-multigroup model is stated in Theorem \ref{thm-LLN}, where the LLN limit is given by a system of Volterra type integral equations.  
Besides the complication of the notation (a double index for the group and the patch), the main difficulty of adapting the new proof to the multipatch-multigroup model is the need for a formula for the proportion (in the large population limit) of susceptibles from group $k$ located in patch $\ell$ at time $t$. In the homogeneous model, this formula is the well-known formula for the solution of a linear one-dimensional ODE (see equation \eqref{eqn-barS-solution} and its use in the proof of Lemma \ref{le:MKV}). In the multipatch-multigroup model, this formula is replaced by  formula \eqref{eq:FKr} in Proposition \ref{pro:FK}. This is the formula for the solution at time $t$ of a forward ODE, which is the law of the location of a susceptible, weighted by an exponential factor taking into account the patches visited between time $0$ and time $t$. That exponential factor is the conditional probability, given the various positions of the individual during the time interval $[0,t]$, of not having been infected by time $t$. The proof of that formula relies upon the Feynman--Kac formula for an adjoint backward ODE, which is established in Lemma \ref{le:FK}. This formula for the proportion (in the large population limit) of susceptibles from group $k$ located in patch $\ell$ plays a crucial role in the subsequent proofs. First of all, it is used to establish the existence of a unique solution to the system of McKean-Vlasov Poisson-driven stochastic equations (equation \eqref{eq:McKV}) in Lemma \ref{le:EU}. Then it is used repeatedly in the proof of Theorem \ref{thm-LLN}, see the proofs of Lemmas \ref{convS}--\ref{convR}. Notably, the proof of the LLNs for the multipatch-multigroup model is much more sophisticated than the same proof for the homogeneous model.

The paper is organized as follows. In Section \ref{sec:hom}, we present the new proof of the LLNs for the homogeneous model, and in Section \ref{sec:multi}, we describe our multipatch-multigroup model, and state the law of large numbers for this model. In Section \ref{sec-proofs}, we provide the proof for that main result. Specifically, in Section \ref{sec-exist-uniq-solution} we prove the existence and uniqueness of a solution to the limiting system of integral equations, in Section \ref{sec-barS-FK}, we derive an expression for the limiting proportion of susceptibles in each patch and group via a Feynman-Kac formula for the associated backward ODEs, in Section \ref{sec-McKV-SDE}, we propose an auxiliary system of Poisson-driven McKean-Vlasov stochastic equations and prove that it has a unique solution, in Section \ref{sec-estimates-iid}, we construct a sequence of i.i.d. processes from the solution to the Poisson-driven stochastic equations and establish estimates for the differences between the original processes and the i.i.d. processes for various quantities, and finally in Section \ref{sec-completing}, we complete the proof of the law of large numbers using the constructed i.i.d. processes and the estimates from previous sections.

\section{The homogeneous model}\label{sec:hom}
We reformulate the SIR epidemic model with varying infectivity,  and obtain the LLN result under weaker assumptions than in the authors' previous paper \cite{FPP2020b}.

Let $\{\lambda_{-j}, j\ge1\}$ and 
$\{\lambda_j,\ j\ge1\}$ be two mutually independent sequences of i.i.d. random elements of $\bD$ (in this paper, $\bD$ denotes the space of c\`adl\`ag paths from $\R_+$ into $\R$, which we equip with the Skorohod $J_1$ topology, see \cite{billingsley1999convergence} for details). $\lambda_{-j}(t)$ is the infectivity at time $t$ of the $j$-th initially infected individual, and $ \lambda_j(t) $ is the infectivity at time $t$ after its time of infection of the $j$-th individual infected after time 0. 
We assume that there exists a deterministic $\lambda^\ast>0$ such that $0\le \lambda_j(t)\le\lambda^\ast$, for all $j\in\Z\backslash\{0\}$ and $t\ge0$, almost surely. We extend $\lambda_j(t)$ for $j\ge1$ to all $t\in\R$, assuming that $\lambda_j(t)=0$ for $t<0$. 
We next define, for each $j\in\Z\backslash\{0\}$,
\begin{align*}
\eta_j:=\sup\{t>0,\ \lambda_j(t)>0\}.
\end{align*}
We denote by $F(t):=\P(\eta_1\le t)$ and $F_0(t):=\P(\eta_{-1}\le t)$ the distribution functions of $\eta_j$ for $j\ge1$ and for $j\le-1$ respectively. Also, let $F_0^c=1-F_0$ and $F^c=1-F$. Finally we define $\bar{\lambda}^0(t)=\E[\lambda_{-1}(t)]$, $\bar{\lambda}(t)=\E[\lambda_1(t)]$.

\subsection{Model and Results}

We split our population in two subsets: those who were infected at time $t=0$, there are $I^N(0)$ of them, and those who were susceptible at time $t=0$, there are $S^N(0)$ of them.
(Hence $R^N(0)=0$, that is, there are no recovered individuals at time 0.)
 We assume that $\bar{I}^N(0):=N^{-1}I^N(0)\to \bar{I}(0)$ and $\bar{S}^N(0):=N^{-1}S^N(0)\to \bar{S}(0)$ a.s., where $(\bar{I}^N(0),\bar{S}(0))\in(0,1)^2$ is deterministic and such that $\bar{I}(0)+\bar{S}(0)\le1$. Note that the random vector 
$(S^N(0),I^N(0))$ is assumed to be independent of the sequence $\{(\lambda_j,Q_j),\ j\ge1\}$ to be defined below.

For $1\le j\le S^N(0)$, we define $A^N_j(t)$ to be the $\{0,1\}$-valued counting process which is zero if the individual $j$ has not been infected by time $t$, and $1$ if he/she has been infected by time $t$. We also define
$\tau^N_j:=\inf\{t>0,\ A^N_j(t)=1\}$. 

The total force of infection in the population at time $t$ is
\begin{align}\label{FNt}
 \mathfrak{F}^N(t)&=\sum_{j=1}^{I^N(0)}\lambda_{-j}(t)+\sum_{j=1}^{S^N(0)}\lambda_j(t-\tau^N_j) 
 \end{align}
Moreover, with the notation $\bar{\mathfrak{F}}^N(t):=N^{-1}\mathfrak{F}^N(t)$, we define the $A^N_j$'s as follows:
\begin{equation}\label{ANt}
 A^N_j(t)=\int_0^t\int_0^\infty{\bf1}_{A^N_j(s^-)=0}{\bf1}_{u\le\bar{\mathfrak{F}}^N(s^-)}Q_j(ds,du)\,,
 \end{equation}
where $\lbrace Q_j,\ j\ge1 \rbrace$ are mutually independent standard Poisson random measures (PRMs) on $\R^2_+$.
Denoting the number of susceptible individuals in the population at time $t$ by $S^N(t)$, we clearly have
\begin{equation}\label{SNt}
 S^N(t)=S^N(0)-\sum_{j=1}^{S^N(0)} A^N_j(t)\,.
 \end{equation}
In addition, the processes $I^N(t)$ and $R^N(t)$ can be written as
\begin{align} 
I^N(t) &= \sum_{j=1}^{I^N(0)} \bone_{\eta_{-j} > t} + \sum_{j=1}^{S^N(0)} \bone_{t \ge \tau^N_j> t- \eta_j} \non \\
& = \sum_{j=1}^{I^N(0)} \bone_{\eta_{-j} > t} + \sum_{j=1}^{S^N(0)} A^N_j(t) 
-  \sum_{j=1}^{S^N(0)} \bone_{ \tau^N_j+ \eta_j \le t}\,,  \label{INt}\\
R^N(t) &= \sum_{j=1}^{I^N(0)} \bone_{\eta_{-j} \le t} +  \sum_{j=1}^{S^N(0)} \bone_{ \tau^N_j+ \eta_j \le t}\,.  \label{RNt}
\end{align}
Note that the system of stochastic equations \eqref{FNt}--\eqref{RNt} uniquely determines the epidemic dynamics. 

We define $\bar{S}^N(t):=N^{-1}S^N(t)$, $\bar{\mathfrak{F}}^N(t):=N^{-1}\mathfrak{F}^N(t)$, 
  $\bar{I}^N(t):=N^{-1}I^N(t)$ and  $\bar{R}^N(t):=N^{-1}R^N(t)$ for $t\ge 0$.  
We prove the following LLN result. Recall that we have assumed that $(\bar{S}^N(0),\bar{I}^N(0))\to(\bar{S}(0),\bar{I}(0))$ a.s. as $N\to\infty$. 

\begin{theorem}\label{LLN-VI} As $N\to\infty$, $(\bar{S}^N,\bar{\mathfrak{F}}^N,\bar{I}^N, \bar{R}^N)\to(\bar{S},\bar{\mathfrak{F}}, \bar{I}, \bar{R})$ in $\bD^4$  in probability, where for $t\ge0$, the limits $(\bar{S},\bar{\mathfrak{F}})$ are the unique solution to the following system of integral equations 
\begin{equation}\label{limitmodel}
\begin{split}
\bar{S}(t)&=\bar{S}(0)-\int_0^t\bar{S}(s)\bar{\mathfrak{F}}(s)ds,\\
\bar{\mathfrak{F}}(t)&=\bar{I}(0)\bar{\lambda}^0(t)+\int_0^t\bar{\lambda}(t-s)\bar{S}(s)\bar{\mathfrak{F}}(s)ds\,,
\end{split}
\end{equation}
and, given the solution $(\bar{S},\bar{\mathfrak{F}})$,  the limits $(\bar{I}, \bar{R})$ are given by the following integral expressions
\begin{equation} \label{limitmodel-IR}
\begin{split}
\bar{I}(t) &= \bar{I}(0) F^c_0(t)  +  \int_0^t F^c(t-s) \bar{S}(s) \bar{\mathfrak{F}}(s)ds\,,\\
\bar{R}(t) &= \bar{I}(0) F_0(t)  +  \int_0^t F(t-s) \bar{S}(s) \bar{\mathfrak{F}}(s)ds\,.
\end{split}
\end{equation}
\end{theorem}

We note that existence and uniqueness of a solution to the system of Volterra equations \eqref{limitmodel} (and hence that of \eqref{limitmodel-IR}) follows from e.g. Theorem~1.2.13  in \cite{brunner_volterra_2017}.

\subsection{The new idea}
The new idea is to associate to a standard PRM $Q$ on $\R_+^2$ the process $A(t)$ which solves
\begin{equation}\label{SDE-MKV}
\begin{split}
A(t)&=\int_0^t\int_0^\infty{\bf1}_{A(s^-)=0}{\bf1}_{u\le\bar{\mathfrak{G}}(s^-)}Q(ds,du),\text{ where}\\
\bar{\mathfrak{G}}(t)&=\bar{I}(0)\bar{\lambda}^0(t)+\bar{S}(0)\E[\lambda(t-\tau)],\text{ with}\\
\tau&=\inf\{t>0,\ A(t)=1\}\,.
\end{split}
\end{equation}
Here $\bar{\lambda}^0(t)=\E[\lambda_{-1}(t)]$, and $\lambda$ is a random element of $\bD$ which is independent of $Q$ (hence also of $\tau$), and has the same law as $\lambda_1$. Hence in particular 
$\E[\lambda(t-\tau)]=\E[\bar{\lambda}(t-\tau)]= \mathbb{E} [ \int_0^t \bar{\lambda}(t-s)dA(s) ]$.

\begin{remark} \label{rem-McKV-hom}
In this system of stochastic equations, one coefficient of the second equation for $\bar{\mathfrak{G}}(t)$ depends on the law of the unknown $\tau$ which is a functional of $A(\cdot)$. 
We call a system of stochastic equations where the law of the unknown function enters the coefficients a McKean--Vlasov stochastic equation. The equation above can be regarded as a Poisson-driven McKean--Vlasov stochastic equation. 

McKean--Vlasov stochastic equations appear naturally in the theory of propagation of chaos, see Sznitman \cite{sznitman1991topics}. In fact, this new approach can be considered as establishing a propagation of chaos result for the times of infection of the initially susceptible individuals.
\end{remark}

Observe that by the first equation in \eqref{limitmodel}, we have the formula
\begin{equation} \label{eqn-barS-solution}
\bar{S}(t)=\bar{S}(0)\exp\left(-\int_0^t \bar{\mathfrak{F}}(s)ds\right)\,, \quad t\ge 0. 
\end{equation}

We first study existence and uniqueness of a solution to the system of equations \eqref{SDE-MKV}. Note that the first component $A$ is random and belongs a.s. to the space of right continuous piecewise constant functions, which are $0$ before the stopping time $\tau$, and then $1$ (with the possibility that $\tau=+\infty$, in which case $A\equiv0$). The second component $\bar{\mathfrak{G}}$ is a deterministic measurable function from $\R_+$ into $[0,\lambda^\ast]$.
\begin{lemma}\label{le:MKV}
Equation \eqref{SDE-MKV} has a unique solution $(A,\bar{\mathfrak{G}})$, which is given by $\bar{\mathfrak{G}}\equiv\bar{\mathfrak{F}}$, where $(\bar{S},\bar{\mathfrak{F}})$ is the unique solution of \eqref{limitmodel}.
\end{lemma}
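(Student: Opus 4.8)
The plan is to recognize \eqref{SDE-MKV} as the system \eqref{limitmodel} in disguise, so that uniqueness for \eqref{SDE-MKV} will follow from uniqueness for \eqref{limitmodel} (an elementary Volterra statement). Everything rests on two observations. First, the function $\bar{\mathfrak{G}}$ appearing in \eqref{SDE-MKV} is \emph{deterministic} (it is an expectation), bounded, $0\le\bar{\mathfrak{G}}(t)\le\lambda^\ast(\bar{I}(0)+\bar{S}(0))\le\lambda^\ast$, and belongs to $\bD$ (by dominated convergence, using $\bar{\lambda}^0,\bar{\lambda}\in\bD$ and the independence of $\lambda$ and $\tau$), so that $\bar{\mathfrak{G}}(s^-)=\bar{\mathfrak{G}}(s)$ for almost every $s$. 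Second, once a deterministic bounded $\bar{\mathfrak{G}}$ is given, the first line of \eqref{SDE-MKV} has a unique $\{0,1\}$-valued solution: its right-hand side receives no contribution after the first jump, hence $A=\bone_{[\tau,\infty)}$, where $\tau$ is the first time a point of $Q$ lands in $\{(s,u):u\le\bar{\mathfrak{G}}(s^-)\}$; since this region carries intensity $\bar{\mathfrak{G}}(s)\,ds$, a standard thinning argument for the Poisson random measure gives
\[
\P(\tau>t)=\exp\left(-\int_0^t\bar{\mathfrak{G}}(s)\,ds\right),\qquad t\ge0 .
\]

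For the forward direction, let $(A,\bar{\mathfrak{G}})$ be any solution of \eqref{SDE-MKV} and set $\bar{S}(t):=\bar{S}(0)\exp\left(-\int_0^t\bar{\mathfrak{G}}(s)\,ds\right)$, equivalently $\bar{S}(t)=\bar{S}(0)-\int_0^t\bar{S}(s)\bar{\mathfrak{G}}(s)\,ds$. Using $\lambda(u)=0$ for $u<0$ (so the possible atom of $\tau$ at $+\infty$ does not contribute), the independence of $\lambda$ and $\tau$, and the law of $\tau$ above,
\[
\bar{S}(0)\,\E[\lambda(t-\tau)]=\bar{S}(0)\int_0^t\bar{\lambda}(t-s)\,\bar{\mathfrak{G}}(s)\,e^{-\int_0^s\bar{\mathfrak{G}}(r)\,dr}\,ds=\int_0^t\bar{\lambda}(t-s)\,\bar{S}(s)\,\bar{\mathfrak{G}}(s)\,ds .
\]
Inserting this into the definition of $\bar{\mathfrak{G}}$ in \eqref{SDE-MKV} shows that $(\bar{S},\bar{\mathfrak{G}})$ solves \eqref{limitmodel}. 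Since \eqref{limitmodel} is well-posed — insert \eqref{eqn-barS-solution} into its second line to obtain a scalar Volterra equation for the unknown function, which has a unique solution on each $[0,T]$ by Picard iteration and Gr\"onwall's lemma, using $0\le\bar{\lambda},\bar{\lambda}^0\le\lambda^\ast$ — we deduce $\bar{\mathfrak{G}}\equiv\bar{\mathfrak{F}}$ (and the associated $\bar{S}$ coincides with the $\bar{S}$ of \eqref{limitmodel}). As $A$ is the explicit pathwise functional of $Q$ determined by $\bar{\mathfrak{G}}=\bar{\mathfrak{F}}$, the solution $(A,\bar{\mathfrak{G}})$ is unique.

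For existence, I would argue in the opposite direction: take the unique solution $(\bar{S},\bar{\mathfrak{F}})$ of \eqref{limitmodel}, define $A$ from $Q$ through \eqref{SDE-MKV} with $\bar{\mathfrak{G}}:=\bar{\mathfrak{F}}$, and check the consistency relation. By the law of $\tau$, $\P(\tau\in ds)=\bar{\mathfrak{F}}(s)e^{-\int_0^s\bar{\mathfrak{F}}(r)\,dr}\,ds=\bar{\mathfrak{F}}(s)\,\bar{S}(s)/\bar{S}(0)\,ds$ on $(0,\infty)$, so the computation above yields $\bar{I}(0)\bar{\lambda}^0(t)+\bar{S}(0)\E[\lambda(t-\tau)]=\bar{I}(0)\bar{\lambda}^0(t)+\int_0^t\bar{\lambda}(t-s)\bar{S}(s)\bar{\mathfrak{F}}(s)\,ds=\bar{\mathfrak{F}}(t)$ by \eqref{limitmodel}; thus $(A,\bar{\mathfrak{F}})$ genuinely solves \eqref{SDE-MKV}, which completes the proof. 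I expect the only real content to be the identification of the law of $\tau$ by thinning of $Q$, together with the elementary well-posedness of the scalar Volterra equation; once $\bar{\mathfrak{G}}$ is recognized as deterministic, the McKean--Vlasov coupling collapses to this classical fixed-point problem, so there is no genuine propagation-of-chaos difficulty at this stage.
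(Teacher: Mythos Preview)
Your proof is correct and takes essentially the same approach as the paper: both recognize that $\bar{\mathfrak{G}}$ is deterministic, compute the law of $\tau$ via the Poisson thinning identity $\P(\tau>t)=\exp\bigl(-\int_0^t\bar{\mathfrak{G}}(s)\,ds\bigr)$, set $\bar{S}(t)=\bar{S}(0)\exp\bigl(-\int_0^t\bar{\mathfrak{G}}(s)\,ds\bigr)$, and thereby identify the self-consistency relation for $\bar{\mathfrak{G}}$ with the system \eqref{limitmodel}. The paper packages this as a fixed-point problem $m=\bar{\mathfrak{G}}^{(m)}$ for an auxiliary parameter $m$, while you separate uniqueness and existence, but the underlying computation is identical.
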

\begin{proof}
Let $m\in \bD$ be such that $0\le m(t)\le\lambda^\ast$ for all $t\ge0$. We consider the increasing $\{0,1\}$-valued process $A^{(m)}$ defined by
\[ A^{(m)}(t)=\int_0^t  \int_0^\infty {\bf1}_{A^{(m)}(s^-)=0}{\bf1}_{u\le m(s^-)}Q(ds,du)\,,\]
and define $\tau^{(m)}=\inf\{t>0,\ A^{(m)}(t)=1\}$.
Also set
\begin{align*}
	\bar{\mathfrak{G}}^{(m)}(t) &= \bar{I}(0)\bar{\lambda}^0(t)+\bar{S}(0)\E[\lambda(t-\tau^{(m)})].
\end{align*}
We then note that any $ m $ such that $ m = \bar{\mathfrak G}^{(m)} $ yields a solution to \eqref{SDE-MKV}.
Let us then show that $m=\bar{\mathfrak{G}}^{(m)}$ if and only if
the pair $(\bar{S}^{(m)},\bar{\mathfrak{G}}^{(m)})$ solves the system of integral equations \eqref{limitmodel}, where $\bar{S}^{(m)}(t):=\bar{S}(0)e^{-\int_0^tm(r)dr}$ (compare with equation \eqref{eqn-barS-solution}).
The result will then follow from the existence and uniqueness of a solution to \eqref{limitmodel}.
Note that
\begin{equation} \label{eqn-mfG-p1}
\begin{split}
\bar{\mathfrak{G}}^{(m)}(t) &= \bar{I}(0)\bar{\lambda}^0(t)+\bar{S}(0)\E \left[ \int_0^t \bar{\lambda}(t-s)dA^{(m)}(s) \right]\\
 &= \bar{I}(0)\bar{\lambda}^0(t)+\bar{S}(0)\int_0^t \bar{\lambda}(t-s)\P(A^{(m)}(s)=0)m(s)ds\\
 &= \bar{I}(0)\bar{\lambda}^0(t)+\bar{S}(0)\int_0^t \bar{\lambda}(t-s)m(s)e^{-\int_0^sm(r)dr}ds\,.
 \end{split}
 \end{equation}
 The third equation follows from the definition of $A^{(m)}(t)$.
 We thus see that $m=\bar{\mathfrak{G}}^{(m)}$ if and only if the pair $(\bar{S}^{(m)},\bar{\mathfrak{G}}^{(m)})$ solves \eqref{limitmodel}.
 Hence $m=\bar{\mathfrak{G}}^{(m)}$ has a unique solution $m^\ast$, and moreover $m^\ast\equiv\bar{\mathfrak{F}}$, which establishes the Lemma.
\end{proof}

We next define 
$\{(A_j(t),\tau_j),\ j\ge1\}$ as the solution of \eqref{SDE-MKV} with $(Q,\lambda)$ replaced by
$(Q_j,\lambda_j)$. This yields an i.i.d. sequence $\{(A_j(\cdot),\tau_j),\ j\ge1\}$ of random elements of
$\bD\times\R_+$.

We prove the following estimate when using the i.i.d. sequence $\{(A_j(t),\tau_j),\ j\ge1\}$  to approximate $\{(A^N_j(t),\tau_j),\ j\ge1\}$. This has a similar flavor as that established for the model with varying infectivity and susceptibility in
\cite{FPPZ}, while we note the clear distinctions in the model and proof; see also Lemma 6.2 in \cite{FPP-survey}. 
\begin{lemma}\label{le:estim}
There exists a positive constant $C_{T,\lambda^\ast}$ such that, for all $N\ge1$, $0\le t\le T$,
\[ \frac{1}{N}\E\left[\sum_{j=1}^{S^N(0)}\sup_{0\le t\le T}|A^N_j(t)-A_j(t)|\right]\le C_{T,\lambda^\ast}
(\ep_N+2N^{-1/2})\,,\]
where $\ep_N:=\E\left[|\bar{I}^N(0)-\bar{I}(0)|+|\bar{S}^N(0)-\bar{S}(0)|\right]$.
\end{lemma}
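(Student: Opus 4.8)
\smallskip
\noindent\emph{Proof plan.} The point is to couple the two systems through the \emph{same} randomness: for each $j$ the processes $A^N_j$ (from \eqref{ANt}) and $A_j$ (from \eqref{SDE-MKV}) are driven by the same Poisson random measure $Q_j$ and carry the same infectivity function $\lambda_j$, the only difference being that $A^N_j$ responds to the random aggregate force $\bar{\mathfrak{F}}^N(s^-)$ whereas, by Lemma \ref{le:MKV}, $A_j$ responds to the deterministic curve $\bar{\mathfrak{F}}(s^-)$. Set $\Phi^N(t):=N^{-1}\E\bigl[\sum_{j=1}^{S^N(0)}\sup_{0\le s\le t}|A^N_j(s)-A_j(s)|\bigr]$; when $I^N(0),S^N(0)$ are random, all estimates below are run conditionally on $(I^N(0),S^N(0))$ (assumed independent of the i.i.d.\ families $\{Q_j\}$, $\{\lambda_j\}$, $\{\lambda_{-j}\}$) and then averaged. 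The goal is a Gr\"onwall inequality for $\Phi^N$ on $[0,T]$.

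First I would subtract \eqref{SDE-MKV} from \eqref{ANt} and use that $t\mapsto\int_0^t\int_0^\infty|\cdots|\,Q_j$ is nondecreasing to get, for $0\le t\le T$,
\[
\sup_{0\le s\le t}|A^N_j(s)-A_j(s)|\ \le\ \int_0^t\!\!\int_0^\infty\bigl|\bone_{A^N_j(s^-)=0}\bone_{u\le\bar{\mathfrak{F}}^N(s^-)}-\bone_{A_j(s^-)=0}\bone_{u\le\bar{\mathfrak{F}}(s^-)}\bigr|\,Q_j(ds,du).
\]
Since the integrand vanishes once $u>\lambda^\ast$ (because $\bar{\mathfrak{F}}^N(s^-)\vee\bar{\mathfrak{F}}(s^-)\le\lambda^\ast$, valid as $I^N(0)+S^N(0)\le N$ and $\bar I(0)+\bar S(0)\le1$), and since $|ax-by|\le|a-b|+|x-y|$ for $a,b,x,y\in\{0,1\}$ together with $|\bone_{A^N_j(s^-)=0}-\bone_{A_j(s^-)=0}|=|A^N_j(s^-)-A_j(s^-)|$, the inner $u$-integral is bounded by $\lambda^\ast|A^N_j(s^-)-A_j(s^-)|+|\bar{\mathfrak{F}}^N(s^-)-\bar{\mathfrak{F}}(s^-)|$. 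Compensating $Q_j$, summing over $1\le j\le S^N(0)$, and dividing by $N$ (using $S^N(0)\le N$ and that $\bar{\mathfrak{F}}^N$ does not depend on $j$) gives
\[
\Phi^N(t)\ \le\ \lambda^\ast\int_0^t\Phi^N(s)\,ds\ +\ \int_0^t\E\bigl[|\bar{\mathfrak{F}}^N(s)-\bar{\mathfrak{F}}(s)|\bigr]\,ds.
\]

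Next I would close the loop by bounding $\E[|\bar{\mathfrak{F}}^N(s)-\bar{\mathfrak{F}}(s)|]$. Decompose the difference into: (i)~$N^{-1}\sum_{j\le S^N(0)}\bigl(\lambda_j(s-\tau^N_j)-\lambda_j(s-\tau_j)\bigr)$; (ii)~$N^{-1}\sum_{j\le S^N(0)}\lambda_j(s-\tau_j)-\bar S(0)\E[\lambda_1(s-\tau_1)]$; (iii)~$N^{-1}\sum_{j\le I^N(0)}\lambda_{-j}(s)-\bar I(0)\bar\lambda^0(s)$. For (i), since the \emph{same} $\lambda_j$ appears in both terms, $|\lambda_j(s-\tau^N_j)-\lambda_j(s-\tau_j)|\le\lambda^\ast\bone_{\tau^N_j\ne\tau_j,\,\tau^N_j\wedge\tau_j\le s}=\lambda^\ast\sup_{0\le r\le s}|A^N_j(r)-A_j(r)|$, hence $\E|(\mathrm{i})|\le\lambda^\ast\Phi^N(s)$ --- this is the step that uses only $\lambda_j\in\bD$, $\lambda_j\le\lambda^\ast$, and no regularity. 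For (ii) and (iii) the families $\{(\lambda_j,\tau_j)\}_j$ and $\{\lambda_{-j}\}_j$ are i.i.d.\ (each $\tau_j$ is a measurable functional of $Q_j$ alone, because $\bar{\mathfrak{F}}$ is deterministic by Lemma \ref{le:MKV}, and $\lambda_j$ is independent of $Q_j$); splitting off the initial-condition errors $(\bar S^N(0)-\bar S(0))$ and $(\bar I^N(0)-\bar I(0))$ and using the $L^1$ bound $\E|N^{-1}\sum_{j\le n}(Y_j-\E Y_1)|\le\lambda^\ast n^{1/2}/N\le\lambda^\ast N^{-1/2}$ for i.i.d.\ $[0,\lambda^\ast]$-valued $Y_j$ with $n\le N$ yields $\E|(\mathrm{ii})|+\E|(\mathrm{iii})|\le\lambda^\ast\ep_N+2\lambda^\ast N^{-1/2}$. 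Altogether $\E[|\bar{\mathfrak{F}}^N(s)-\bar{\mathfrak{F}}(s)|]\le\lambda^\ast\Phi^N(s)+\lambda^\ast(\ep_N+2N^{-1/2})$, so
\[
\Phi^N(t)\ \le\ 2\lambda^\ast\int_0^t\Phi^N(s)\,ds\ +\ \lambda^\ast T\,(\ep_N+2N^{-1/2}),\qquad 0\le t\le T,
\]
and Gr\"onwall's lemma gives $\Phi^N(T)\le\lambda^\ast T e^{2\lambda^\ast T}(\ep_N+2N^{-1/2})$; finally one averages over $(I^N(0),S^N(0))$ when these are random. This is the assertion, with $C_{T,\lambda^\ast}=\lambda^\ast T e^{2\lambda^\ast T}$.

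I expect the main obstacle to be the self-referential structure: the source term in the inequality for $\Phi^N$, namely $\E[|\bar{\mathfrak{F}}^N-\bar{\mathfrak{F}}|]$, feeds $\Phi^N$ back into itself through term (i), so the estimate only closes via Gr\"onwall and careful bookkeeping of the constants. The conceptual crux is that, because the $\lambda_j$ carry no continuity, the increment $\lambda_j(s-\tau^N_j)-\lambda_j(s-\tau_j)$ cannot be made small using closeness of $\tau^N_j$ to $\tau_j$; it can only be dominated by $\lambda^\ast\bone_{\tau^N_j\ne\tau_j}$, i.e.\ by $|A^N_j-A_j|$ itself. This is exactly why both processes must be built from the \emph{same} $Q_j$ and $\lambda_j$, and it is the mechanism by which the argument succeeds under the weak hypotheses $\lambda_j\in\bD$, $0\le\lambda_j\le\lambda^\ast$, where the $\bD$-tightness route of \cite{FPP2020b} required a modulus-of-continuity condition.
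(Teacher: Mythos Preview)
Your proof is correct and follows essentially the same route as the paper: couple $A^N_j$ and $A_j$ through the same $Q_j$ and $\lambda_j$, bound $\sup_{r\le t}|A^N_j-A_j|$ by a $Q_j$-integral, compensate, decompose $\bar{\mathfrak F}^N-\bar{\mathfrak F}$ into the same three pieces, and close via Gronwall. The only difference is that the paper first passes to $\tilde A^N_j(t)=\int_0^t\int_0^\infty\bone_{u\le\bar{\mathfrak F}^N(s^-)}Q_j(ds,du)$ (dropping the $\bone_{A^N_j(s^-)=0}$ factor) and uses $|A^N_j-A_j|\le|\tilde A^N_j-\tilde A_j|$, which eliminates your extra $\lambda^\ast\int_0^t\Phi^N(s)\,ds$ term and yields the slightly sharper constant $C_{T,\lambda^\ast}=\lambda^\ast e^{\lambda^\ast T}$.
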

\begin{proof}
From \eqref{ANt} and \eqref{SDE-MKV},
\begin{align*}
|A^N_j(t)-A_j(t)| 
&\le\int_0^t\int_{\bar{\mathfrak{F}}^N(s^-)\wedge\bar{\mathfrak{F}}(s^-)}^{\bar{\mathfrak{F}}^N(s^-)\vee\bar{\mathfrak{F}}(s^-)} Q_j(ds,du). 
\end{align*}
Since the right hand side is non-decreasing,
\begin{align*}
	\sup_{0\le r\le t} |A^N_j(r)-A_j(r)|&\le
	\int_0^t\int_{\bar{\mathfrak{F}}^N(s^-)\wedge\bar{\mathfrak{F}}(s^-)}^{\bar{\mathfrak{F}}^N(s^-)\vee\bar{\mathfrak{F}}(s^-)} Q_j(ds,du).\\
\end{align*}
Taking expectations on both sides then yields
\begin{align}\label{estim1}
	\E\left[\sup_{0\le r\le t} |A^N_j(r)-A_j(r)|\right]&\le\E \left[ \int_0^t|\bar{\mathfrak{F}}^N(s)-\bar{\mathfrak{F}}(s)|ds \right]\,.
\end{align}
Next, using \eqref{FNt}, \eqref{limitmodel} and \eqref{SDE-MKV},
\begin{multline}
\E\left[|\bar{\mathfrak{F}}^N(t)-\bar{\mathfrak{F}}(t)|\right]\le
\E\left[\frac{1}{N}\left|\sum_{j=1}^{I^N(0)}(\lambda_{-j}(t)-\bar{\lambda}^0(t))\right|+\frac{1}{N}\left|\sum_{j=1}^{S^N(0)}(\lambda_j(t-\tau^N_j)-\E[\lambda_j(t-\tau_j)])\right|\right]\\
+\lambda^\ast\E\left[|\bar{I}^N(0)-\bar{I}(0)|+|\bar{S}^N(0)-\bar{S}(0)|\right]\,. \label{estim2}
\end{multline}
Since the $\lambda_{-j}$'s are mutually independent, and globally independent of $I^N(0)$,
\begin{align}
\E\left[\frac{1}{N}\left|\sum_{j=1}^{I^N(0)}(\lambda_{-j}(t)-\bar{\lambda}^0(t))\right|\right]
&\le \frac{1}{N}\E\left[\left|\sum_{j=1}^{I^N(0)}(\lambda_{-j}(t)-\bar{\lambda}^0(t))\right|^2\right]^{1/2}\non\\
&\le \frac{\lambda^\ast}{\sqrt{N}}\,.\label{estim3}
\end{align}
Moreover, 
\begin{multline}
\E\left[\frac{1}{N}\left|\sum_{j=1}^{S^N(0)}(\lambda_j(t-\tau^N_j)-\E[\lambda_j(t-\tau_j)])\right|\right]\\
\le
\E\left[\frac{1}{N}\left|\sum_{j=1}^{S^N(0)}(\lambda_j(t-\tau^N_j)-\lambda_j(t-\tau_j))\right|\right]
+\E\left[\frac{1}{N}\left|\sum_{j=1}^{S^N(0)}(\lambda_j(t-\tau_j)-\E[\lambda_j(t-\tau_j)])\right|\right]
\label{estim41}
\end{multline}
Since the sequence $ \lbrace (\lambda_j(\cdot), \tau_j), j \geq 1\rbrace $ is i.i.d., independent of $S^N(0)$ and $ \lambda_j \leq \lambda^* $ almost surely,
\begin{align*}
	\E\left[\frac{1}{N}\left|\sum_{j=1}^{S^N(0)}(\lambda_j(t-\tau_j)-\E[\lambda_j(t-\tau_j)])\right|\right] \leq \frac{\lambda^\ast}{\sqrt{N}}.
\end{align*}
Note also that
\begin{equation*}
\E\left[\frac{1}{N}\left|\sum_{j=1}^{S^N(0)}[\lambda_j(t-\tau^N_j)-\lambda_j(t-\tau_j)\right|\right]
\le \E\left[ |\lambda_j(t-\tau^N_j)-\lambda_j(t-\tau_j)| \right]\,.
\end{equation*}
On the other hand, using Markov's inequality and the fact that $ A_j $ and $ A^N_j $ are $\{0,1\}$-valued,
\begin{align*}
	\E \left[ \left| \lambda_j(t-\tau^N_j)-\lambda_j(t-\tau_j) \right| \right] &\leq \lambda^* \P(\tau^N_j\wedge t\not=\tau_j\wedge t) \\
	&= \lambda^* \P\left(\sup_{0\le r\le t}|A^N_j(r)-A_j(r)| \geq 1 \right) \\
	&\leq \lambda^* \E\left[\sup_{0\le r\le t}|A^N_j(r)-A_j(r)|\right].
\end{align*}
Combining  \eqref{estim41} with the last three inequalities, we obtain that
\begin{align} \label{estim4}
	\E\left[\frac{1}{N}\left|\sum_{j=1}^{S^N(0)}(\lambda_j(t-\tau^N_j)-\E[\lambda_j(t-\tau_j)])\right|\right] \leq \lambda^* \E\left[\sup_{0\le r\le t}|A^N_j(r)-A_j(r)|\right] + \frac{\lambda^*}{\sqrt{N}}.
\end{align}
 It now follows from 
\eqref{estim1}, \eqref{estim2}, \eqref{estim3} and \eqref{estim4} that
\[ \E\left[\sup_{0\le r\le t} |A^N_j(r)-A_j(r)|\right]\le\lambda^\ast(\ep_N+2N^{-1/2}) t+
\lambda^\ast\int_0^t\E\left[\sup_{0\le r\le s}|A^N_j(r)-A_j(r)|\right]ds\,.\]
The result, with $C_{T,\lambda^\ast}:=\lambda^\ast T\exp(\lambda^\ast T)$ now follows from Gronwall's Lemma.
\end{proof}

Note that since $\sup_{0\le t\le T}\big|A^N_j(t)-A_j(t)\big|$ is either zero or else $1$, this Lemma implies that
\[ \P\left(\sup_{0\le t\le T}\big|A^N_j(t)-A_j(t)\big|\not=0\right)\le C(\ep_N+2N^{-1/2})\,,\]
and also
\[ \P\left(\tau^N_j\wedge T\not=\tau_j\wedge T\right)\le C(\ep_N+2N^{-1/2})\,.\]

\begin{proof}[Completing the proof of Theorem \ref{LLN-VI}]

Now let us turn back to \eqref{FNt} and write
\begin{align*}
	\bar{\mathfrak F}^N(t) &= \frac{1}{N}\sum_{j=1}^{I^N(0)}\lambda_{-j}(t) + \frac{1}{N}\sum_{j=1}^{S^N(0)}\lambda_j(t-\tau_j) + \frac{1}{N}\sum_{j=1}^{S^N(0)}\left(\lambda_j(t-\tau^N_j)-\lambda_j(t-\tau_j)\right), \\
	&=: \bar{\mathfrak{F}}^N_0(t) + \bar{\mathfrak{F}}^N_1(t) + V^N(t).
\end{align*}
It follows from \cite{rao1963law} that, as $N\to\infty$, the first two terms converge a.s. in $\bD$, 
\begin{align*}
	(\bar{\mathfrak{F}}^N_i(t), t \in [0,T]) \to (\bar{\mathfrak{F}}_i(t), t \in [0,T]), \quad i \in \lbrace 0, 1 \rbrace,
\end{align*}
where $ \bar{\mathfrak{F}}_0(t)=\bar{I}(0)\bar{\lambda}^0(t) $ and $ \bar{\mathfrak{F}}_1(t) = \bar{S}(0)\E[\lambda_1(t-\tau)] $.
It remains to consider the error term $ V^N $, which tends to $0$ locally uniformly in $t$ in probability, thanks to Lemma \ref{le:estim}. Indeed
\begin{align*}
\E \left[ \left|\frac{1}{N}\sum_{j=1}^{S^N(0)}\left(\lambda_j(t-\tau^N_j)-\lambda_j(t-\tau_j)\right)\right|\right]
&\le \E \left[ \frac{\lambda^\ast}{N}\sum_{j=1}^{S^N(0)}\P\left(\tau^N_j\wedge t\not=\tau_j\wedge t\right) \right]\\
&\le\lambda^\ast C(\ep_N+2N^{-1/2}),
\end{align*}
which tends to zero as $N\to\infty$. 
Thus we have shown that  $(\bar{S}^N,\bar{\mathfrak{F}}^N_0,\bar{\mathfrak{F}}^N_1,V^N)\to(\bar{S},\bar{\mathfrak{F}}_0,\bar{\mathfrak{F}}_1,0)$ in $\bD^3$ in probability as $N \to \infty$. It follows from Lemma \ref{le:continuity} below that $\bar{\mathfrak{F}}_1$ is continuous, hence as $N\to\infty$,
\[\bar{\mathfrak{F}}^N=\bar{\mathfrak{F}}^N_0+\bar{\mathfrak{F}}^N_1 + V^N \to\bar{\mathfrak{F}}=\bar{\mathfrak{F}}_0+\bar{\mathfrak{F}}_1\]
in $\bD$ in probability.

Let us now show the convergence of $ (\bar{I}^N, \bar{R}^N) $ to $ (\bar{I}, \bar{R}) $, defined in \eqref{limitmodel-IR}.
By the law of large numbers,
\begin{equation}\label{convI0}
N^{-1}\sum_{j=1}^{I^N(0)} \bone_{\eta_{-j} > t} \to \bar{I}(0) F^c_0(t) \qinq \bD
\end{equation}
in probability as $N\to \infty$. 
By the above proof, 
observing that 
\[
\E[A_j(t)] = 1- \exp\left( -\int_0^t \bar{\mathfrak{F}}(s)ds \right)
\]
and using the expressions of $\bar{S}(t)$ in \eqref{limitmodel} and \eqref{eqn-barS-solution}, 
we also have 
\[ N^{-1}\sum_{j=1}^{S^N(0)} A^N_j(t) \to \int_0^t\bar{S}(s)\bar{\mathfrak{F}}(s)ds \qinq \bD\]
in probability as $N\to \infty$. 
By an argument similar to the derivation of \eqref{eqn-mfG-p1},
 we obtain
\begin{align} \label{eqn-convI-keystep}
\bar{S}(0) \E[ \bone_{ \tau+ \eta_1 \le t}] & =  \bar{S}(0) \E\left[\int_0^t F(t-s)d A(s)\right] \non\\ 
& =  \bar{S}(0)\int_0^t F(t-s) \bar{\mathfrak{F}}(s) e^{-\int_0^s \bar{\mathfrak{F}}(r)dr} ds  \non \\
&= \int_0^t F(t-s) \bar{S}(s) \bar{\mathfrak{F}}(s)ds \,. 
\end{align}
Then by  Lemma \ref{le:estim} and LLN of i.i.d. random elements in $\bD$,  we obtain 
\begin{equation}\label{convI1}
N^{-1}  \sum_{j=1}^{S^N(0)} \bone_{ \tau^N_j+ \eta_j \le t} \to  \int_0^t F(t-s) \bar{S}(s) \bar{\mathfrak{F}}(s)ds \qinq \bD
\end{equation}
in probability as $N\to\infty$. 
Moreover again by Lemma \ref{le:continuity}, the limit is continuous, hence adding
\eqref{convI0} and \eqref{convI1}, we conclude that $\bar{I}^N\to\bar{I}$ in $\bD$ in probability. Since 
$\bar{R}^N(t)=1-\bar{S}^N(t)-\bar{I}^N$, $\bar{R}(t)=1-\bar{S}(t)-\bar{I}(t)$ and $\bar{S}$ is continuous, we conclude
that  $(\bar{S}^N,\bar{\mathfrak{F}}^N,\bar{I}^N,\bar{R}^N)\to(\bar{S},\bar{\mathfrak{F}},\bar{I},\bar{R})$  $\bD^4$ in probability as $N \to \infty$. We have used above several times the fact that, while $(f_n,g_n)\to (f,g)$ in $\bD^2$ does not imply that $f_n+g_n\to f+g$, this last statement holds if in addition either $f$ or $g$ is continuous. See the comment following Lemma \ref{le:continuity} below.
This completes the proof.
\end{proof}

\section{The multipatch-multigroup model}\label{sec:multi}

We assume that the population of size $N$ is split into $K$ groups, and distributed on $L$ distinct patches. 
Let $S^{N,\ell}_k(t)$, $I^{N,\ell}_k(t)$ and $R^{N,\ell}_k(t)$ denote the numbers of susceptible, infected and recovered individuals from group $k$ who are in patch $\ell$ at time $t$, respectively. Let $B^{N,\ell}_k(t)$ be the number of individuals in group $k$ and in patch $\ell$ at time $t$, i.e.,
\[  B^{N,\ell}_k(t)=S^{N,\ell}_k(t)+I^{N,\ell}_k(t)+R^{N,\ell}_k(t)\,.\]
Note that in our model, the total population size, $\sum_{\ell=1}^L\sum_{k=1}^K B^{N,\ell}_k(t)$ is fixed, and equal to $ N $.

We are given a collection of positive numbers $\{\bar{S}^\ell_k(0), \bar{I}^\ell_k(0), \bar{R}^\ell_k(0),\ 1\le \ell\le L,\ 1\le k\le K\}$ which are such that, with $\bar{S}_k(0)=\sum_{\ell=1}^L\bar{S}^\ell_k(0)$, $\bar{I}_k(0)=\sum_{\ell=1}^L\bar{I}^\ell_k(0)$ and $\bar{R}_k(0)=\sum_{\ell=1}^L\bar{R}^\ell_k(0)$, $\bar{S}_k(0)+\bar{I}_k(0)+\bar{R}_k(0)=1$. For each $1\le\ell\le L$ and $1\le k\le K$, we let $\bar{B}^\ell_k(0):=\bar{S}^\ell_k(0)+\bar{I}^\ell_k(0)+ \bar{R}^\ell_k(0)$, and we assume that $\min_{\ell,k}\bar{B}^\ell_k(0)>0$.

We choose arbitrarily the $3K$ integers $S^N_k(0), I^N_k(0), R^N_k(0)$ for $1\le k\le K$ in such a way that
$\sum_k[S^N_k(0)+I^N_k(0)+R^N_k(0)]=N$ and for each $1\le k\le K$, we have
\[ |S^N_k(0)-N\bar{S}_k(0)|\vee |I^N_k(0)-N\bar{I}_k(0)|\vee |R^N_k(0)-N\bar{R}_k(0)|\le 1\,.\]

For each $1\le k\le K$, the susceptible (resp. infected, resp. recovered) individuals from group $k$ jump from patch to patch according to a continuous time jump Markov process, that we shall denote by $X_k(t)$ (resp. $Y_k(t)$, resp. $Z_k(t)$), and whose dynamics will be specified below. We now specify the initial conditions of those processes, together with the initial populations in each compartment, patch and group.
The three collections of r.v.'s $\{X_{j,k}(0),\ 1\le j\le S^N_k(0)\}$, $\{Y_{j,k}(0),\ 1\le j\le I^N_k(0)\}$ and $\{Z_{j,k}(0),\ 1\le j\le R^N_k(0)\}$ are mutually independent, each one is i.i.d., and the distributions are specified as follows:
\[ \P(X_{j,k}(0)=\ell)=\frac{\bar{S}^\ell_k(0)}{\bar{S}_k(0)},\quad \P(Y_{j,k}(0)=\ell)=\frac{\bar{I}^\ell_k(0)}{\bar{I}_k(0)},\quad \P(Z_{j,k}(0)=\ell)=\frac{\bar{R}^\ell_k(0)}{\bar{R}_k(0)}\,.\] Finally, for $1\le\ell\le L$ and $1\le k\le K$, $S^{N,\ell}_k(0)$ (resp. $I^{N,\ell}_k(0)$, resp. $R^{N,\ell}_k(0)$) is the cardinal of the set $\{j: X_{j,k}(0)=\ell\}$ (resp. $\{j: Y_{j,k}(0)=\ell\}$, resp. $\{j: Z_{j,k}(0)=\ell\}$).
In fact we shall not use the processes $Z_{j,k}$ below. However, it is convenient to define the repartition of the initially recovered individuals in the various patches as we did for the initially susceptible and for the initially infected ones.

It clearly follows from the above definitions and the law of large numbers that as $N\to \infty$, $\bar{S}^{N,\ell}_k(0) = N^{-1}S^{N,\ell}_k(0) \to \bar{S}^{\ell}_k(0)$, $ \bar{I}^{N,\ell}_k(0) = N^{-1}I^{N,\ell}_k(0) \to \bar{I}^{\ell}_k(0)$ and $ \bar{R}^{N,\ell}_k(0) = N^{-1}R^{N,\ell}_k(0) \to \bar{R}^{\ell}_k(0)$ a.s.

Moreover, we assume that the above defined initial random variables $S^{N,\ell}_k(0)$, $I^{N,\ell}_k(0)$ and $R^{N,\ell}_k(0)$ 
are independent of the random objects $\lambda_{j,k}$, $X_k, Y_k$ and $Q_{j,k}^{\ell}$ defined below that dictate the dynamics after time 0, as well as of the processes $X_{j,k}(t), Y_{j,k}(t), Z_{j,k}(t)$.

While susceptible, an individual from group $k$ moves from patch to patch according to a time-inhomogeneous Markov process $X_k(t)$, with jump rates $\nu_{S,k}^{\ell, \ell'}(t)$ and transition function $p_k^{\ell,\ell'}(s,t)=\P(X_k(t)=\ell'|X_k(s)=\ell)$, and  while infectious, an individual from group $k$ moves from patch to patch according to a time-inhomogeneous Markov process $Y_k(t)$ with jump rates $\nu_{I,k}^{\ell, \ell'}(t)$ and transition function
\begin{align} \label{transition_function_q}
	q_k^{\ell,\ell'}(s,t)=\P(Y_k(t)=\ell'|Y_k(s)=\ell).
\end{align}
Similarly, the recovered individuals migrate with rates $\nu_{R,k}^{\ell, \ell'}(t)$.
We assume that those movements of the various individuals are mutually independent. The time inhomogeneity may be due to restrictions of movements  imposed by the authorities during the epidemic. We assume that all the rates are locally bounded, 
  i.e., for any $T>0$,
 \[\sup_{0\le t\le T,\, k, \ell, \ell'}[\nu_{S,k}^{\ell, \ell'}(t)+\nu_{I,k}^{\ell, \ell'}(t)+\nu_{R,k}^{\ell, \ell'}(t)]<\infty\,.\] 
 We shall write $X^{s,\ell}_{j,k}(t)$ (resp. $Y^{s,\ell}_{j,k}(t)$) for the position at time $t$ of the individual $j$ from group $k$ if it is susceptible (resp. infected) during the time interval $(s,t)$, and was in patch $\ell$ at time $s$. 
 $X_{j,k}(t)$ (resp. $Y_{j,k}(t)$) will denote the position of the individual $j$ at time $t$, in case that individual is initially susceptible (resp. infected) and is still susceptible (resp. infected) at time $t$. 

The initially infected individual $j$ from group $k$ has at time $t\ge0$ the infectivity $\lambda_{j, k}(t)$ (recall that in this case $j\le -1$), while an initially susceptible individual $j$ from group $k$ who is infected at time 
$\tau_{j,k}^N$ has at time $t$ the infectivity 
$\lambda_{j,k}(t-\tau_{j,k}^N)$. The random functions $\{\lambda_{j,k},\ j\in\Z,\ 1\le k\le K\}$ are mutually independent.
For each $1\le k\le K$,  $\{\lambda_{j,k},\ j\le -1\}$ have the same law, as well as  $\{\lambda_{j,k},\ j\ge 1\}$. But the laws of $\lambda_{-1,k}$ and $\lambda_{1,k}$ are different:
the infectivity of the initially infected individuals has a different distribution from that of the newly infected individuals.
The infectivity depends upon the group $k$, which is quite natural in the case of age groups, since the reaction to infection depends upon the age. On the contrary, the infectivity does not depend upon the patch where the infected individual finds itself. We assume that for any $j\in\Z \backslash \{0\}$ and $1\le k\le K$, $\lambda_{j, k}$ has trajectories in $\bD$, and moreover, $0\le \lambda_{j, k}(t)\le\lambda^\ast$ a.s. for all $j\in\Z \backslash\{0\}$, $1\le k\le K$ and $t\ge0$, where $\lambda^\ast>0$ is a fixed constant.
\begin{remark}
It would be more satisfactory to assume that $\lambda^\ast$ is a r.v. with some finite moment. Unfortunately,
at this stage we are not able to extend our proofs to such a situation.
\end{remark}

We also define, for each $j\in\Z\backslash\{0\}$ and $k=1,\ldots,K$, $\eta_{j,k}=\sup\{t>0,\ \lambda_{j,k}(t)>0\}$. 
 We assume that $\lambda_{j,k}(t)=0$  for $t<0$. 
The random variable $\eta_{j,k}$ represents  the infected periods of newly and initially infected individuals for $j>0$ and $j<0$, respectively.  
Note that the infected period may include both exposed and infectious periods. 

We denote by $F_k(t):=\P(\eta_{1,k}\le t)$ and $F^0_k(t)=\P(\eta_{-1,k}\le t)$ the distribution function of $\eta_{j,k}$ for $j\ge1$ and for $j\le-1$, respectively. 

 Under the  i.i.d. assumption of the random functions $\{\lambda_{j,k}(\cdot)\}_{j\ge 1}$, the sequence of variables $\{\eta_{j,k}\}_{j\ge 1}$ is i.i.d. for each type $k$. 
Similarly, the sequence  $\{\eta_{j,k}\}_{j\le -1}$ is also i.i.d. for each type $k$, and independent of $\{\lambda_{j,k}(\cdot)\}_{j\ge 1}$.  We moreover define $\bar{\lambda}_k(t):=\E[\lambda_{1,k}(t)]$ and $\bar{\lambda}^0_k(t):=\E[\lambda_{-1,k}(t)]$. 


The total force of infection delivered at time $t$ by the individuals of group $k$ in patch $\ell$ is given by
\begin{equation}\label{eq:FN} 
\mathfrak{F}^{N,\ell}_{k}(t)=\sum_{j=1}^{I^{N}_k(0)}\lambda_{-j,k}(t){\bf1}_{Y_{j,k}(t)=\ell}+
\sum_{j=1}^{S^{N}_k(0)}\lambda_{j,k}(t-\tau_{j,k}^N)\sum_{\ell'} {\bf1}_{X_{j,k}(\tau_{j,k}^N)=\ell'}{\bf1}_{Y^{\tau_{j,k}^N,\ell'}_{j,k}(t) =\ell}\,.
\end{equation}
Note that since $\lambda_{j,k}(t-\tau^N_{j,k})$ is zero for $t<\tau^N_{j,k}$, we do not care about the fact that the last factor above is defined only for $t\ge\tau^N_{j,k}$.
What depends upon the patch is the contact rate. 
 We assume that a susceptible of patch $\ell$ and group $k$ has contacts with possibly infectious individuals of patch $\ell'$ and group $k'$ at rate 
$\beta^{\ell,\ell'}_{k,k'}(t)$  at time $t$, and that there exists a constant $\beta^*>0$ such that $\beta^{\ell,\ell'}_{k,k'}(t) \le \beta^*$ for all $\ell, \ell', k, k'$ and $t\ge 0$.    The functions $\beta^{\ell,\ell'}_{k,k'}(t)$ dictate the dependence upon the pairs $(k,k')$ and $(\ell,\ell')$ in the contact rates. This include the so-called ``infection at distance", where $\beta^{\ell,\ell'}_{k,k'}(t)$ can take very small values. As we have discussed in the introduction, this is an approach to model movements  between residence and working place, or during a weekend or holiday, which would be difficult to model as migrations between different patches. 
We assume also that for a given parameter $\gamma\in[0,1]$, the rate at which new infections affect the individuals
from group $k$ in patch $\ell$ at time $t$ is
\begin{align}  \label{eqn-Upsilon}
\Upsilon^{N,\ell}_k(t)&=\frac{S^{N,\ell}_k(t)}{N^{1-\gamma}(B^{N,\ell}_k(t))^\gamma}
\sum_{k'}\sum_{\ell'}\beta^{\ell,\ell'}_{k,k'}(t)\mathfrak{F}^{N,\ell'}_{k'}(t)\\
&=\Big( \frac{B^{N,\ell}_k(t)}{N}\Big)^{1-\gamma}\frac{S^{N,\ell}_k(t)}{B^{N,\ell}_k(t)}\sum_{k'}\sum_{\ell'}\beta^{\ell,\ell'}_{k,k'}(t)\mathfrak{F}^{N,\ell'}_{k'}(t) \,.\non
\end{align}
Let us explain the role of the parameter $\gamma$. In case $\gamma=1$, infected individuals from patch $\ell'$ and group $k'$ meet individuals from patch $\ell$ and group $k$ at rate $\beta^{\ell,\ell'}_{k,k'}$, and that encounter results in a new infection if the partner in the encounter is susceptible, which happens with probability $\frac{S^{N,\ell}_k(t)}{B^{N,\ell}_k(t)}$. In that case $\gamma=1$, the fact that the rate of encounters in patch $\ell$ of individuals from group $k$ is likely to depend upon the abondance of such individuals (one is likely to meet individuals at a higher rate in a densely populated area than in a desert) can be adjusted to the expected density of such individuals by the parameter $\beta^{\ell,\ell'}_{k,k'}$, while in cases $\gamma<1$ that rate of encounter is modulated by the actual (random) local density of individuals of group $k$. The effect of this modulation is highest for $\gamma=0$.
By convention, whenever $B^{N,\ell}_k(t)=0$, we set $ \Upsilon^{N,\ell}_{k}(t)=0$. 


Also, let 
\[
\bar\Gamma^{N,\ell}_k(t) := \frac{1}{N^{1-\gamma}(B^{N,\ell}_k(t))^\gamma}
\sum_{k'}\sum_{\ell'}\beta^{\ell,\ell'}_{k,k'}(t)\mathfrak{F}^{N,\ell'}_{k'}(t)\,,
\]
which is the force of infectivity to each susceptible of group $k$ in patch $\ell$. 
We define 
$\bar{B}^{N,\ell}_k(t):=N^{-1}B^{N,\ell}_k(t)$, $\bar{\mathfrak{F}}^{N,\ell}_{k}(t):=N^{-1}\mathfrak{F}^{N,\ell}_{k}(t)$. 
Then we have 
\begin{equation*} \label{eqn-bar-Gamma-n-lk}
\bar\Gamma^{N,\ell}_k(t)  = \frac{1}{(\bar{B}^{N,\ell}_k(t))^\gamma}
\sum_{k'}\sum_{\ell'}\beta^{\ell,\ell'}_{k,k'}(t) \bar{\mathfrak{F}}^{N,\ell'}_{k'}(t)\,.
\end{equation*}

Now we model the infection of each initially susceptible individual. For each $1\le k\le K$, $1\le j\le S^N_k(0)$ and $ 1 \leq \ell \leq L $, let
\begin{align} \label{eqn-A-N-jk}
A^{N,\ell}_{j,k}(t)=\int_0^t\int_0^\infty{\bf1}_{A^{N}_{j,k}(s^-)=0}{\bf1}_{X_{j,k}(s)=\ell}{\bf1}_{u\le \bar\Gamma^{N,\ell}_{k}(s^-)} Q^\ell_{j,k}(ds,du)\,,
\end{align}
where $\{Q^\ell_{j,k},\  k \in \sK, \ell \in \sL, j\ge1\}$ are mutually independent standard PRMs on $\R^2_+$ 
and $A^N_{j,k}(t)=\sum_\ell A^{N,\ell}_{j,k}(t)$. 
$A^N_{j,k}(t)=1$ if and only if the individual $j$ from group $k$ has been infected on the time interval $(0,t]$. Otherwise,
$A^N_{j,k}(t)=0$. Recall that $\tau^N_{j,k}$ denotes the time at which the initially susceptible individual $j$ from group $k$ is infected. We have $\tau^N_{j,k}=\inf\{t>0, A^N_{j,k}(t)=1\}$.
If $A^N_{j,k}(t)=1$, the unique $\ell$ such that $A^{N,\ell}_{j,k}(t)=1$ is the patch where the individual $j$ 
from group $k$ has been infected.

Besides \eqref{eq:FN}, the evolution of the epidemic is characterized by the dynamics of 
$S^{N,\ell}_k(t)$, $I^{N,\ell}_k(t)$ and $R^{N,\ell}_k(t)$: 
\begin{equation}\label{eq:SN}
\begin{split}
S^{N,\ell}_k(t)&=S^{N,\ell}_k(0)-\sum_{j=1}^{S^N_k(0)}A^{N,\ell}_{j,k}(t)
-\sum_{\substack{\ell'=1 \\ \ell'\neq \ell}}^L P^{\ell,\ell'}_{S,k}\left(\int_0^t\nu^{\ell,\ell'}_{S,k}(s)S^{N,\ell}_k(s)ds\right)\\&\quad
+\sum_{\substack{\ell'=1 \\ \ell'\neq \ell}}^L P^{\ell',\ell}_{S,k}\left(\int_0^t\nu^{\ell',\ell}_{S,k}(s)S^{N,\ell'}_k(s)ds\right),
\end{split}
\end{equation}

\begin{equation}\label{eq:IN}
\begin{split}
I^{N,\ell}_k(t) &= I^{N,\ell}_k(0)   +\sum_{j=1}^{S^N_k(0)}A^{N,\ell}_{j,k}(t)  -  \sum_{\ell'=1}^L \sum_{j=1}^{I^{N,\ell'}_k(0)} \bone_{\eta_{-j,k} \le t} \bone_{Y^{0,\ell'}_{-j,k}(\eta_{-j,k}) =\ell}   \\
& \quad -  \sum_{j=1}^{S^{N}_{k}(0)} \bone_{\tau^{N}_{j, k}+ \eta_{j,k} \le t} \sum_{\ell'}\bone_{X_{j,k}(\tau^N_{j,k}) =\ell'}
 \bone_{Y^{\tau^{N}_{j, k},\ell'}_{j,k}\!\!(\tau^{N}_{j, k}+\eta_{j,k})=\ell}      \\
& \quad 
 -\sum_{\substack{\ell'=1 \\ \ell'\neq \ell}}^L P^{\ell,\ell'}_{I,k}\left(\int_0^t\nu^{\ell,\ell'}_{I,k}(s)I^{N,\ell}_k(s)ds\right) 
+\sum_{\substack{\ell'=1 \\ \ell'\neq \ell}}^L P^{\ell',\ell}_{I,k}\left(\int_0^t\nu^{\ell',\ell}_{I,k}(s)I^{N,\ell'}_k(s)ds\right),
\end{split}
\end{equation}
\begin{equation}\label{eq:RN}
\begin{split}
R^{N,\ell}_k(t) &= R^{N,\ell}_k(0)+\sum_{\ell'=1}^L \sum_{i=1}^{I^{N,\ell'}_k(0)} \bone_{\eta_{-j,k} \le t} \bone_{Y^{0,\ell'}_{-j,k}(\eta_{-j,k}) =\ell}   \\
& \quad +  \sum_{j=1}^{S^{N}_{k}(0)} \bone_{\tau^{N}_{j, k}+ \eta_{j,k} \le t} \sum_{\ell'}\bone_{X_{j,k}(\tau^N_{j,k}) =\ell'}
 \bone_{Y^{\tau^{N}_{j, k},\ell'}_{j,k}\!\!(\tau^{N}_{j, k}+\eta_{j,k})=\ell}     \\
& \quad - \sum_{\substack{\ell'=1 \\ \ell'\neq \ell}}^L P^{\ell,\ell'}_{R,k} \left( \int_0^t \nu^{\ell,\ell'}_{R,k}(s) R^{N,\ell}_k(s)ds \right) +  \sum_{\substack{\ell'=1 \\ \ell'\neq \ell}}^L P^{\ell',\ell}_{R,k} \left( \int_0^t \nu^{\ell',\ell}_{R,k}(s) R^{N,\ell'}_k(s)ds \right), 
\end{split}
\end{equation}
where $P^{\ell,\ell'}_{S,k}, P^{\ell,\ell'}_{I,k}, P^{\ell,\ell'}_{R,k}$, $k\in\sK, \ell, \ell' \in \sL$ are mutually independent standard Poisson processes.

\begin{remark}
The  Poisson processes just introduced above describe the global movements of susceptible, infected and recovered individuals of each group $k$. They are of course not independent of the individual movements $X_{j,k}, Y_{j,k}, Z_{j,k}$. However, we shall not need to specify the dependence between those two collections of random processes.
\end{remark} 
The system of stochastic equations in \eqref{eq:FN}, \eqref{eqn-A-N-jk} and \eqref{eq:SN}--\eqref{eq:RN} uniquely determines the epidemic dynamics.

We define $\bar{S}^{N,\ell}_k(t):=N^{-1}S^{N,\ell}_k(t)$, $\bar{I}^{N,\ell}_k(t):=N^{-1}I^{N,\ell}_k(t)$  and $\bar{R}^{N,\ell}_k(t):=N^{-1}R^{N,\ell}_k(t)$. 
We want to show that under the above assumptions,
we have the following Theorem.
\begin{theorem} \label{thm-LLN}
As $N\to\infty$,
$\big(\bar{S}^{N,\ell}_k, \bar{\mathfrak{F}}_k^{N,\ell}, \bar{I}^{N,\ell}_k, \bar{R}^{N,\ell}_k,\, k \in \sK, \, \ell \in \sL\big)  \to \big(\bar{S}^{\ell}_k, \bar{\mathfrak{F}}_k^\ell, \bar{I}^{\ell}_k, \bar{R}^{\ell}_k,\, k\in \sK, \, \ell \in \sL\big) \qinq \bD^{4KL}$  in probability,  
where 
the limits are  the unique solution of the following system of integral equations: 

\begin{equation}\label{eq:limitLLN}
\begin{split}
 \bar{S}^\ell_k(t)&=\bar{S}^\ell_k(0)-\int_0^t \bar{S}^\ell_k(s)\bar{\Gamma}^\ell_k(s)ds
+\sum_{\ell'=1}^L\int_0^t\nu^{\ell',\ell}_{S,k}(s)\bar{S}^{\ell'}_k(s)ds\,,\\
\bar{\mathfrak{F}}^\ell_{k}(t)&=\bar{\lambda}^0_{k}(t)\sum_{\ell'=1}^L\bar{I}^{\ell'}_k(0)q^{\ell',\ell}_k(0,t)
+\sum_{\ell'}\int_0^t\bar{\lambda}_k(t-s)\bar{S}^{\ell'}_k(s)\bar{\Gamma}^{\ell'}_k(s)q^{\ell',\ell}_k(s,t) ds\,,\\
\bar{I}^{\ell}_k(t) &=  \bar{I}^{\ell}_k(0)  -    \sum_{\ell'=1}^L \bar{I}^{\ell'}_k(0) \int_0^t  q_k^{\ell', \ell}(0,s)  F^0_{k}(ds)   +\int_0^t \bar{S}^\ell_k(s)\bar{\Gamma}^\ell_k(s) ds \\
 & \quad -  \sum_{\ell'=1}^{L} \int_0^t  \int_0^{t-s}   q_k^{\ell', \ell}(s,s+u)  F_k(du)  \bar{S}^{\ell'}_k(s)\bar{\Gamma}^{\ell'}_k(s)  ds  +  \sum_{\ell'=1}^L   \int_0^t  \nu^{\ell',\ell}_{I,k}(s) \bar{I}^{\ell'}_k(s) ds\,, \\
\bar{R}^{\ell}_k(t) &=  \bar{R}^{\ell}_k(0)+  \sum_{\ell'=1}^L \bar{I}^{\ell'}_k(0) \int_0^t  q_k^{\ell', \ell}(0,s)  F^0_{k}(ds)  \\
& \quad +   \sum_{\ell'=1}^{L} \int_0^t  \int_0^{t-s}   q_k^{\ell', \ell}(s,s+u)  F_k(du)  \bar{S}^{\ell'}_k(s)\bar{\Gamma}^{\ell'}_k(s)  ds  +  \sum_{\ell'=1}^L   \int_0^t  \nu^{\ell',\ell}_{R,k} (s)\bar{R}^{\ell'}_k(s)   ds\,,
\end{split}
\end{equation}
where
\begin{equation} \label{eqn-barGamma}
\bar{\Gamma}^\ell_k(t)=(\bar{B}^\ell_k(t))^{-\gamma}\sum_{k'}\sum_{\ell'}\beta^{\ell,\ell'}_{k,k'}(t)\bar{\mathfrak{F}}^{\ell'}_{k'}(t)\,, \quad
\bar{B}_k^{\ell}(t) = \bar{S}^{\ell}_{k}(t) + \bar{I}^{\ell}_{k}(t) + \bar{R}^{\ell}_{k}(t)\,,
\end{equation}
and $ q^{l,l'}_k(s,t) $ is defined in \eqref{transition_function_q}.
\end{theorem}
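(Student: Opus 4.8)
The plan is to follow the same strategy as for the homogeneous model in Section \ref{sec:hom}, but with the necessary modifications to handle migration and spatial structure. The overall architecture has five stages, carried out in the order listed in the introduction. First, I would establish that the limiting system \eqref{eq:limitLLN} has a unique solution; this is a fixed-point argument for a system of Volterra-type integral equations with bounded, Lipschitz coefficients (the boundedness of $\lambda^\ast$ and $\beta^\ast$ and the strict positivity assumption $\inf_{N,\ell,k}N^{-1}B^{N,\ell}_k(0)>0$ keep $(\bar B^\ell_k)^{-\gamma}$ under control on $[0,T]$), using a Gronwall/Picard iteration on $\bD([0,T])$. Second — and this is where the new spatial ingredient enters — I would derive the Feynman--Kac representation (Lemma \ref{le:FK} and Proposition \ref{pro:FK}) for $\bar S^\ell_k(t)$. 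The point is that, in the homogeneous case, the explicit solution \eqref{eqn-barS-solution} $\bar S(t)=\bar S(0)e^{-\int_0^t\bar{\mathfrak F}}$ was the linchpin of Lemma \ref{le:MKV}; here $\bar S^\ell_k$ solves a linear system coupling patches through the migration rates $\nu^{\ell',\ell}_{S,k}$, so the analogue is \eqref{eq:FKr}: $\bar S^\ell_k(t) = \E\big[\bar S^{X_k(0)}_k(0)\,\exp(-\int_0^t \bar\Gamma^{X_k(s)}_k(s)\,ds)\,\bone_{X_k(t)=\ell}\big]$, obtained by applying the Feynman--Kac formula to the adjoint backward ODE associated with the forward Kolmogorov equation for $X_k$.

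Third, I would introduce the auxiliary McKean--Vlasov Poisson-driven SDE \eqref{eq:McKV}, which prescribes, for each $(j,k)$, the law of the infection time $\tau_{j,k}$ and infection patch of a tagged initially-susceptible individual who migrates according to $X_k$ and gets infected at rate $\bar\Gamma^{X_k(s^-)}_k(s^-)$; the self-consistency is that $\bar{\mathfrak F}^\ell_k$ (and hence $\bar\Gamma^\ell_k$) is determined by the law of that tagged pair together with the initially-infected individuals migrating via $Y_k$. Lemma \ref{le:EU} asserts this system has a unique solution, and its proof is the exact analogue of Lemma \ref{le:MKV}: reformulate the fixed point in terms of a candidate force of infection $m^\ell_k\in\bD$ with $0\le m^\ell_k\le$ (a bound), compute $\E[\int_0^t\lambda_k(t-s)\,dA^{(m),\ell}_{j,k}(s)]$ using the Feynman--Kac formula from Step 2 in place of \eqref{eqn-barS-solution}, and check that $m=\bar{\mathfrak G}^{(m)}$ iff the associated tuple solves \eqref{eq:limitLLN} — then invoke uniqueness from Step 1. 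From the solution one builds an i.i.d. sequence $\{(A_{j,k},\tau_{j,k},X_{j,k},\ldots)\}$ of the driving objects.

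Fourth, I would prove the coupling estimate analogous to Lemma \ref{le:estim}: $\frac1N\E\big[\sum_{k}\sum_{j=1}^{S^N_k(0)}\sup_{[0,T]}|A^N_{j,k}(t)-A_{j,k}(t)|\big]\le C_{T,\lambda^\ast,\beta^\ast}(\varepsilon_N+N^{-1/2})$, by the same sandwiching of the two Poisson-driven processes between their common lower envelope, taking expectations, bounding $\E|\bar{\mathfrak F}^{N,\ell}_k(t)-\bar{\mathfrak F}^\ell_k(t)|$ and $\E|\bar\Gamma^{N,\ell}_k(t)-\bar\Gamma^\ell_k(t)|$ by i.i.d. fluctuation terms ($O(N^{-1/2})$) plus the coupling error plus $\varepsilon_N$, and closing with Gronwall. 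Two extra wrinkles appear here: $\bar\Gamma$ involves the random normalization $(\bar B^{N,\ell}_k)^{-\gamma}$, so one needs a lower bound on $\bar B^{N,\ell}_k(t)$ uniform in $N$ over $[0,T]$ (which follows from the initial lower bound, since migration rates are bounded and losses to recovery are controlled — a short auxiliary estimate), and the migration processes $X_{j,k}$ must be coupled to be literally the same for $A^N_{j,k}$ and $A_{j,k}$, which is harmless since they are independent of the $Q^\ell_{j,k}$. Fifth and finally, I would assemble the LLN: for the initially-infected sums and for the i.i.d. versions with $\tau_{j,k}$ replacing $\tau^N_{j,k}$, convergence in $\bD$ is the law of large numbers for i.i.d. $\bD$-valued random elements (\cite{rao1963law}); the error terms involving $\lambda_{j,k}(t-\tau^N_{j,k})-\lambda_{j,k}(t-\tau_{j,k})$, the indicator differences $\bone_{Y^{\tau^N_{j,k},\ell'}_{j,k}(\cdot)=\ell}$ vs. $\bone_{Y^{\tau_{j,k},\ell'}_{j,k}(\cdot)=\ell}$, and the recovery indicators $\bone_{\tau^N_{j,k}+\eta_{j,k}\le t}$ vs. $\bone_{\tau_{j,k}+\eta_{j,k}\le t}$ all go to $0$ in probability locally uniformly by the Step 4 estimate (plus $\P(\tau^N_{j,k}\wedge t\neq\tau_{j,k}\wedge t)\le C(\varepsilon_N+N^{-1/2})$ and Markov's inequality); the migration Poisson processes $P^{\ell,\ell'}_{\cdot,k}$ contribute functional-LLN terms $\int_0^t\nu^{\ell',\ell}_\cdot(s)\bar{(\cdot)}^{\ell'}_k(s)\,ds$ via the standard FLLN for Poisson processes and Gronwall; and the limit one reads off is exactly \eqref{eq:limitLLN}, the key computation being the multipatch analogue of \eqref{eqn-convI-keystep}, namely $\E[\int_0^t F_k(t-s)\,q^{\ell',\ell}_k(s,s+\,\cdot\,)\,dA_{j,k}(s)]$ evaluated through the Feynman--Kac formula, yielding the double-integral recovery term.

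I expect the main obstacle to be Step 2 — the Feynman--Kac formula \eqref{eq:FKr} and its use — and, relatedly, its propagation through Steps 3 and 4. In the homogeneous model the survival probability factorizes scalarly; here the susceptible both migrates (a time-inhomogeneous Markov chain on $L$ patches) and is killed at a patch-and-time-dependent rate, so one must correctly set up the adjoint pair of forward/backward ODEs, verify the regularity needed to apply Feynman--Kac (the rates $\nu_{S,k}^{\ell,\ell'}$ and $\bar\Gamma^\ell_k$ are only assumed measurable and bounded, so one works with the integrated/mild form), and then carry the resulting $L\times L$ transition-function-weighted expression consistently through every subsequent estimate — in particular making sure the fixed-point map in Lemma \ref{le:EU} is still a contraction in $\bD([0,T])^{KL}$ despite the coupling of patches. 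The rest is, in spirit, a careful but routine bookkeeping upgrade of the homogeneous argument.
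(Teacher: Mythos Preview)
Your five--stage architecture matches the paper's proof exactly, and your identification of the Feynman--Kac formula \eqref{eq:FKr} as the replacement for \eqref{eqn-barS-solution} is on target. However, you have underestimated Step~4, and this is where the real difficulty of the multipatch proof lies --- not in Step~2 as you anticipate.

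The issue is the normalization $(\bar B^{N,\ell}_k)^{-\gamma}$ when $\gamma>0$. A uniform lower bound on $\bar B^{N,\ell}_k$ is \emph{not} sufficient: to control $|\bar\Gamma^{N,\ell}_k-\bar\Gamma^\ell_k|$ you need to bound $\big|(\bar B^{N,\ell}_k)^{-\gamma}-(\bar B^\ell_k)^{-\gamma}\big|$, which (via Lipschitzness once you have the lower bound) requires $|\bar B^{N,\ell}_k-\bar B^\ell_k|\to 0$. But $\bar B^{N,\ell}_k=\bar S^{N,\ell}_k+\bar I^{N,\ell}_k+\bar R^{N,\ell}_k$, so you must simultaneously control the $S$, $I$, $R$ differences \emph{inside} the Gronwall loop for the $A$--coupling. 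The paper handles this with three additional lemmas (Lemmas~\ref{convS}--\ref{convR}) that bound each of $\E|\bar S^{N,\ell}_k-\bar S^\ell_k|$, $\E|\bar I^{N,\ell}_k-\bar I^\ell_k|$, $\E|\bar R^{N,\ell}_k-\bar R^\ell_k|$ in terms of the $A$--coupling error \emph{plus} $\int_0^t|\bar\Gamma^{N,\ell}_k-\bar\Gamma^\ell_k|$; substituting these back into the $\bar\Gamma$ estimate and applying Gronwall twice closes the argument. Your proposal omits this feedback loop entirely. Moreover, the lower bound on $\bar B^{N,\ell}_k$ is not deterministic: it only holds on an event of probability $1-O(N^{-1/2})$ (the compensated Poisson migration terms fluctuate), so the paper introduces a stopping time $\sigma^N_T$ and carries the indicator $\mathbf{1}_{t<\sigma^N_T}$ through every estimate. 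Consequently the paper does \emph{not} obtain your claimed explicit rate $C(\varepsilon_N+N^{-1/2})$ for the coupling; it only proves qualitative convergence to zero (the $\varepsilon_N$ absorb various unquantified LLN errors from Lemmas~\ref{convS}--\ref{convR}).
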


\begin{remark}
Recall that in the homogenous model, the dynamics of the limits is essentially determined by the two processes $\bar{S}(t)$ and $\bar{\mathfrak{F}}(t)$ with the two integral equations in \eqref{limitmodel}, and then the limits $\bar{I}(t)$ and $\bar{R}(t)$ are given as integral functionals of  $\bar{S}(t)$ and $\bar{\mathfrak{F}}(t)$. This is no longer the case for the multipatch-multigroup model except when $\gamma=0$.  In that case,  $ \bar{S}^\ell_k(t)\bar{\Gamma}^\ell_k(t)= \bar{S}^\ell_k(t)\sum_{k'}\sum_{\ell'}\beta^{\ell,\ell'}_{k,k'}(t)\bar{\mathfrak{F}}^{\ell'}_{k'}(t)$, hence, $( \bar{S}^\ell_k(t), \bar{\mathfrak{F}}^\ell_{k}(t))$ for all $k\in \sK$ and $\ell \in \sL$ can be first determined by  the set of the first two equations in \eqref{eq:limitLLN}, that is,
\begin{equation}\label{eq:limitLLN-gamma0}
\begin{split}
 \bar{S}^\ell_k(t)&=\bar{S}^\ell_k(0)- \int_0^t \bar{S}^\ell_k(s)\sum_{k'}\sum_{\ell'}  \beta^{\ell,\ell'}_{k,k'}(s) \bar{\mathfrak{F}}^{\ell'}_{k'}(s) ds
+\sum_{\ell'=1}^L\int_0^t\nu^{\ell',\ell}_{S,k}(s)\bar{S}^{\ell'}_k(s)ds\,,\\
\bar{\mathfrak{F}}^\ell_{k}(t)&=\bar{\lambda}^0_{k}(t)\sum_{\ell'=1}^L\bar{I}^{\ell'}_k(0)q^{\ell',\ell}_k(0,t)
+\sum_{\ell'}\int_0^t\bar{\lambda}_k(t-s)\bar{S}^{\ell'}_k(s)\sum_{k'}\sum_{\ell''}\beta^{\ell',\ell''}_{k,k'}(s) \bar{\mathfrak{F}}^{\ell''}_{k'}(s)q^{\ell',\ell}_k(s,t) ds\,. 
\end{split}
\end{equation}

However, when $\gamma \in (0,1]$, all the limits $\big(\bar{S}^{\ell}_k, \bar{\mathfrak{F}}_k^\ell, \bar{I}^{\ell}_k, \bar{R}^{\ell}_k,\, k\in \sK, \, \ell \in \sL\big)$ are intertwined in a complicated manner through $\bar{\Gamma}^\ell_k(t)$ in \eqref{eqn-barGamma}. 
Thus, the proof of existence and uniqueness of solution to the system of equations in \eqref{eq:limitLLN} becomes more delicate, see Section \ref{sec-exist-uniq-solution}. 
Moreover, the extension of the new approach from the homogeneous model to the multipatch-multigroup model becomes substantially non-trivial. \end{remark}

\section{Proof of Theorem \ref{thm-LLN}} \label{sec-proofs}

\subsection{Existence and uniqueness of a solution to the system \eqref{eq:limitLLN}} \label{sec-exist-uniq-solution}

\begin{theorem} \label{thm-limit-unique}
The system of equations \eqref{eq:limitLLN} has a unique solution.
\end{theorem}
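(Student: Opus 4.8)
The plan is to prove existence and uniqueness by a Picard-type fixed point argument on a finite time interval $[0,T]$, followed by iteration to cover $[0,\infty)$, working in the Banach space of bounded measurable functions with the supremum norm. The natural unknowns are the vector $\big(\bar{\mathfrak{F}}^\ell_k, \bar{S}^\ell_k, \bar{I}^\ell_k, \bar{R}^\ell_k\big)_{k\in\sK,\ell\in\sL}$, but the delicate point is the factor $(\bar{B}^\ell_k(t))^{-\gamma}$ appearing in $\bar\Gamma^\ell_k$ in \eqref{eqn-barGamma}. First I would establish an a priori lower bound $\bar{B}^\ell_k(t)\ge b_0>0$ on $[0,T]$ for any putative solution: summing the equations for $\bar{S}^\ell_k,\bar{I}^\ell_k,\bar{R}^\ell_k$ in \eqref{eq:limitLLN} shows that $\bar{B}^\ell_k$ satisfies a linear system driven only by the migration rates $\nu^{\ell',\ell}_{\cdot,k}$ (the infection terms cancel between $\bar{S}$, $\bar{I}$, $\bar{R}$, and the recovery terms cancel between $\bar{I}$ and $\bar{R}$), so $\bar{B}^\ell_k(t)$ is exactly the solution of the Kolmogorov forward equations for the total group-$k$ mass, which stays bounded away from $0$ on $[0,T]$ by the assumption $\inf_{N}\inf_{\ell,k}N^{-1}B^{N,\ell}_k(0)>0$ together with boundedness of the migration rates. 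On the set where $\bar B^\ell_k\ge b_0$, the map $x\mapsto x^{-\gamma}$ is Lipschitz, which is what makes the contraction estimate go through.

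Next I would set up the fixed-point map $\Phi$ on the closed bounded convex subset $\sX_T$ of $\bD^{4KL}[0,T]$ consisting of tuples with $0\le\bar{S}^\ell_k,\bar{I}^\ell_k,\bar{R}^\ell_k\le 1$, $\bar B^\ell_k\ge b_0$, and $0\le\bar{\mathfrak{F}}^\ell_k\le\lambda^\ast$: given an input tuple, define $\bar\Gamma^\ell_k$ by \eqref{eqn-barGamma}, then read off the output tuple from the right-hand sides of \eqref{eq:limitLLN}. One checks $\Phi$ maps $\sX_T$ into itself using $\beta^{\ell,\ell'}_{k,k'}\le\beta^\ast$, $\bar\lambda_k,\bar\lambda^0_k\le\lambda^\ast$, the sub-stochasticity of the $q^{\ell',\ell}_k(s,t)$, and the fact that $F_k,F^0_k$ are distribution functions; the $\bar B^\ell_k\ge b_0$ invariance is the a priori bound just discussed. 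For the contraction estimate, take two inputs and bound the sup-norm of the difference of outputs on $[0,t]$; the key intermediate estimate is
\[
\big|\bar\Gamma^\ell_k(s)-\tilde{\bar\Gamma}^\ell_k(s)\big|\le C\Big(\big|\bar B^\ell_k(s)-\tilde{\bar B}^\ell_k(s)\big|+\sum_{k',\ell'}\big|\bar{\mathfrak{F}}^{\ell'}_{k'}(s)-\tilde{\bar{\mathfrak{F}}}^{\ell'}_{k'}(s)\big|\Big),
\]
valid because $x^{-\gamma}$ is Lipschitz on $[b_0,\infty)$ and all quantities are bounded. Feeding this into the integral equations yields $\|\Phi(\text{tuple})-\Phi(\tilde{\text{tuple}})\|_{\infty,[0,t]}\le C_T\int_0^t\|\text{tuple}-\tilde{\text{tuple}}\|_{\infty,[0,s]}\,ds$, so either by choosing $T$ small and iterating, or directly via a Gronwall / weighted-norm ($e^{-\theta t}$) argument, one gets a unique fixed point, which is the unique solution.

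The main obstacle is handling the $(\bar{B}^\ell_k)^{-\gamma}$ nonlinearity: without the a priori lower bound $\bar B^\ell_k\ge b_0$ the coefficient is not Lipschitz and the whole scheme collapses, so the crux is the observation that the infection and recovery terms telescope in the sum $\bar S^\ell_k+\bar I^\ell_k+\bar R^\ell_k$, reducing $\bar B^\ell_k$ to the solution of a benign linear migration ODE whose positivity is inherited from the initial condition. A secondary, more bookkeeping-heavy issue is that $\bar{\mathfrak{F}}^\ell_k$ and $(\bar S,\bar I,\bar R)$ are genuinely coupled when $\gamma>0$ (unlike the homogeneous case and the $\gamma=0$ case in \eqref{eq:limitLLN-gamma0}), so the fixed point must be taken in the full $4KL$-dimensional space simultaneously rather than solving for $\bar{\mathfrak{F}}$ first; but this only affects the size of the constants, not the structure of the argument. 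Continuity / càdlàg regularity of the fixed point is immediate since each right-hand side in \eqref{eq:limitLLN} is a sum of a càdlàg term (coming from the initial-condition contributions, which may have at most one jump each) and absolutely continuous integral terms.
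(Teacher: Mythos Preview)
Your overall strategy---establish an a priori lower bound on $\bar B^\ell_k$, then run a contraction/Gronwall argument using that $x\mapsto x^{-\gamma}$ is Lipschitz on $[b_0,\infty)$---is exactly the paper's approach. However, your justification of the lower bound contains a genuine error.

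You claim that after summing the $\bar S,\bar I,\bar R$ equations, ``$\bar B^\ell_k(t)$ is exactly the solution of the Kolmogorov forward equations for the total group-$k$ mass''. This would be true only if the migration rates for susceptibles, infectives and recovereds coincided, i.e.\ $\nu^{\ell',\ell}_{S,k}=\nu^{\ell',\ell}_{I,k}=\nu^{\ell',\ell}_{R,k}$. In the model they are allowed to differ, and the sum gives instead
\[
\bar B^\ell_k(t)=\bar B^\ell_k(0)+\sum_{\ell'}\int_0^t\Big[\nu^{\ell',\ell}_{S,k}(s)\bar S^{\ell'}_k(s)+\nu^{\ell',\ell}_{I,k}(s)\bar I^{\ell'}_k(s)+\nu^{\ell',\ell}_{R,k}(s)\bar R^{\ell'}_k(s)\Big]ds,
\]
which is \emph{not} a closed linear system in $\bar B$ alone. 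So you cannot invoke positivity of a Markov semigroup here. The paper handles this by a different (and short) trick: set $\bar\nu^{\ell,\ell}_k:=\sup_{s\le T}\max\{-\nu^{\ell,\ell}_{S,k}(s),-\nu^{\ell,\ell}_{I,k}(s),-\nu^{\ell,\ell}_{R,k}(s)\}$, pull out $-\bar\nu^{\ell,\ell}_k\int_0^t\bar B^\ell_k(s)\,ds$ from the diagonal ($\ell'=\ell$) contribution, and observe that everything left over is nonnegative (the off-diagonal rates are $\ge0$, and the diagonal remainders $\bar\nu^{\ell,\ell}_k+\nu^{\ell,\ell}_{\cdot,k}(s)\ge0$ by construction). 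This yields the differential inequality $\bar B^\ell_k(t)\ge\bar B^\ell_k(0)-\bar\nu^{\ell,\ell}_k\int_0^t\bar B^\ell_k(s)\,ds$, hence $\bar B^\ell_k(t)\ge\bar B^\ell_k(0)e^{-\bar\nu^{\ell,\ell}_kT}$. Once you replace your Kolmogorov-equation claim by this inequality, the rest of your argument goes through and matches the paper's.

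A smaller point: your claim that ``the $\bar B^\ell_k\ge b_0$ invariance is the a priori bound just discussed'' conflates invariance under the Picard map with the bound for actual solutions; the output $\tilde{\bar B}^\ell_k$ depends on the \emph{input} $\bar S,\bar I,\bar R$, not on itself, so the Gronwall step above does not directly give invariance of $\sX_T$. The standard fix (which the paper leaves implicit under ``standard argument'') is either to truncate, replacing $(\bar B^\ell_k)^{-\gamma}$ by $(\bar B^\ell_k\vee b_0)^{-\gamma}$, prove existence for the truncated system, and then use the a priori bound to remove the truncation, or to localize in time.
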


\begin{proof}
If $\gamma=0$, we have $ \bar{S}^\ell_k(t)\bar{\Gamma}^\ell_k(t)= \bar{S}^\ell_k(t)\sum_{k'}\sum_{\ell'}\beta^{\ell,\ell'}_{k,k'}(t)\bar{\mathfrak{F}}^{\ell'}_{k'}(t)$. The existence and uniqueness of a solution to \eqref{eq:limitLLN-gamma0} follow from a standard argument for Volterra integral equations with Lipschitz coefficients, 
see e.g. Theorems 1.2.13 in \cite{brunner_volterra_2017}.

If $0 < \gamma \le 1$, $ \bar{S}^\ell_k(t)\bar{\Gamma}^\ell_k(t)= \bar{S}^\ell_k(t)(\bar{B}^\ell_k(t))^{-\gamma} \sum_{k'}\sum_{\ell'}\beta^{\ell,\ell'}_{k,k'}(t)\bar{\mathfrak{F}}^{\ell'}_{k'}(t)$, which involves the map $(x,y,z)\mapsto\frac{x}{(x+y+z)^\gamma}$. 
For any $0<\varepsilon<1$, this map is globally Lipschitz on the subset $\{(x,y,z)\in[0,1]^3,\ x+y+z\ge\varepsilon\}$. We will show that for any $T>0$, $\inf_{0\le t\le T}\inf_{\ell,k}\bar{B}^\ell_k(t)>0$.  Then the existence and uniqueness follows from a standard argument of Volterra integral equations with Lipschitz coefficients. 

By the expressions of $ \bar{S}^\ell_k(t)$,  $\bar{I}^\ell_k(t)$ and $ \bar{R}^\ell_k(t)$ in \eqref{eq:limitLLN}, we have, for $ t \in [0,T] $,
\begin{align} \label{eqn-barB-rep-nu}
 \bar{B}^\ell_k(t) & =  \bar{B}^\ell_k(0) + \sum_{\ell'=1}^L\int_0^t\nu^{\ell',\ell}_{S,k}(s)\bar{S}^{\ell'}_k(s)ds +  \sum_{\ell'=1}^L   \int_0^t  \nu^{\ell',\ell}_{I,k}(s) \bar{I}^{\ell'}_k(s) ds +  \sum_{\ell'=1}^L   \int_0^t  \nu^{\ell',\ell}_{R,k} (s)\bar{R}^{\ell'}_k(s)   ds  \non \\
 &=  \bar{B}^\ell_k(0) + \sum_{\ell'\neq \ell}\int_0^t\nu^{\ell',\ell}_{S,k}(s)\bar{S}^{\ell'}_k(s)ds +  \sum_{\ell'\neq\ell}   \int_0^t  \nu^{\ell',\ell}_{I,k}(s) \bar{I}^{\ell'}_k(s) ds +  \sum_{\ell'\neq \ell}   \int_0^t  \nu^{\ell',\ell}_{R,k} (s)\bar{R}^{\ell'}_k(s)   ds \non \\
 & \quad + \int_0^t \nu^{\ell,\ell}_{S,k}(s) \bar{S}^{\ell}_k(s)ds + 
    \int_0^t   \nu^{\ell,\ell}_{I,k}(s)  \bar{I}^{\ell}_k(s) ds +  \int_0^t  \nu^{\ell,\ell}_{R,k} (s) \bar{R}^{\ell}_k(s)   ds \non \\
 & =  \bar{B}^\ell_k(0) +  \underline{\nu}_k^{\ell,\ell}  \int_0^t  \bar{B}^\ell_k(s)  ds +     \int_0^t  \bar V_k^\ell (s) ds\non\\
 &=e^{\underline{\nu}_k^{\ell,\ell} t}\bar{B}^\ell_k(0)+\int_0^te^{\underline{\nu}_k^{\ell,\ell} (t-s)} \bar V_k^\ell (s) ds, 
\end{align}
where $\underline{\nu}_k^{\ell,\ell} := \inf_{s \in [0,T]} \Big\{ \nu^{\ell,\ell}_{S,k}(s) \wedge  \nu^{\ell,\ell}_{I,k}(s) \wedge  \nu^{\ell,\ell}_{R,k}(s)\Big\}$, 
and 
\begin{align} \label{eqn-barV-rep}
 \bar V_k^\ell (s) &=\big( \nu^{\ell,\ell}_{S,k}(s) -\underline{\nu}_k^{\ell,\ell}  \big) \bar{S}^{\ell}_k(s) +
 \big(    \nu^{\ell,\ell}_{I,k}(s) -\underline{\nu}_k^{\ell,\ell} \big) \bar{I}^{\ell}_k(s) + \big(    \nu^{\ell,\ell}_{R,k}(s)-\underline{\nu}_k^{\ell,\ell}   \big) \bar{R}^{\ell}_k(s) \non \\
 & \quad + \sum_{\ell'\neq \ell} \nu^{\ell',\ell}_{S,k}(s)\bar{S}^{\ell'}_k(s) +  \sum_{\ell'\neq}     \nu^{\ell',\ell}_{I,k}(s) \bar{I}^{\ell'}_k(s) +  \sum_{\ell'\neq \ell}   \nu^{\ell',\ell}_{R,k} (s)\bar{R}^{\ell'}_k(s) \non  \\
 & \ge 0 \qforallq s \ge 0. 
\end{align}
Note  that $\nu^{\ell,\ell}_{S,k}(t) = - \sum_{\ell'\neq \ell} \nu^{\ell,\ell'}_{S,k}(t)$ for each $\ell \in \sL$.
We deduce from \eqref{eqn-barB-rep-nu} and \eqref{eqn-barV-rep} (recall that $\underline{\nu}_k^{\ell,\ell}<0$)
\begin{align} \label{eqn-barB-lowerbound}
 \bar{B}^\ell_k(t) \ge  \bar{B}^\ell_k(0) e^{\underline{\nu}_k^{\ell,\ell}   t } \ge \bar{B}^\ell_k(0) e^{ \underline{\nu}_k^{\ell,\ell}   T } := C_{k,T}^\ell \qforallq t \in [0,T]\,.
\end{align}
Moreover our assumption $\inf_{N\ge1}\inf_{\ell,k}N^{-1}B^{N,\ell}_k(0)>0$ a.s. implies that $\inf_{\ell,k}\bar{B}^\ell_k(0)>0$, hence $\inf_{\ell,k}C_{k,T}^\ell>0$. 
\end{proof}

\subsection{An expression of $\bar{S}_k^\ell(t)$ via a Feynman-Kac formula for the associated backward ODEs } \label{sec-barS-FK}

The following estimate will be useful below. From the facts that $\sum_{\ell, k}\bar{I}^\ell_k(t)\le1$ and 
$\lambda(t)\le \lambda^\ast$,  we deduce that for all $t\ge0$,
\begin{equation}\label{est-barF}
\sum_{\ell,k}\bar{\mathfrak{F}}^\ell_{k}(t)\le \lambda^\ast \,.
\end{equation}

From now on, for each $1\le k\le K$, we shall consider the $\{1,\dots,L\}$-valued process $X_k(t)$, which starts,  as specified for the $X_{j,k}$'s at the start of Section \ref{sec:multi},  with the initial distribution $\P(X_k(0)=\ell)=\bar{S}^\ell_k(0)/\bar{S}_k(0) $, $1 \le \ell\le L$.

 The process $X_k(t)$ is a non homogeneous Markov process, whose jumps are specified by the rates
$\nu^{\ell,\ell'}_{S,k}(t)$.

It is clear that the explicit formula $\bar{S}(t)=\bar{S}(0)\exp\left(-\int_0^t \bar{\mathfrak{F}}(s)ds\right)$ in \eqref{eqn-barS-solution} 
was crucial in the proofs of Lemma \ref{le:MKV}  and the convergence of $\bar{I}^N$ in \eqref{eqn-convI-keystep} above. We need an extension of this formula in the context
of the present multipatch-multigroup model. Such a formula will be provided by the next Proposition. 

Let us first rewrite the first line of \eqref{eq:limitLLN}. Denoting by $\mathcal{Q}_k(t)$ the infinitesimal generator of the process $X_k(t)$, i.e., the matrix whose $(\ell,\ell')$ entry
equals $\nu^{\ell,\ell'}_k(t)$, this equation can be rewritten as the following equation for the vector
$\bar{{\bf S}}_k(t):=(\bar{S}^1_k(t),\ldots,\bar{S}^L_k(t))'$ (with $'$ denoting transpose): 
\begin{equation}\label{eq:S} 
\frac{d\bar{{\bf S}}_k}{dt}(t)=-D_k(t)\bar{{\bf S}}_k(t)+\mathcal{Q}'_k(t)\bar{{\bf S}}_k(t)\, ,
\end{equation}
with $D_k(t)$ denoting the diagonal matrix whose $(\ell,\ell)$ entry equals $\bar{\Gamma}^\ell_k(t)$, and $\mathcal{Q}_k'(t)$ denoting the transpose of the matrix $\mathcal{Q}_k(t)$. 
Note that $\bar{{\bf S}}_k(t)$ need not be differentiable at any time $t$, and the right hand side of \eqref{eq:S} can be an irregular function of $t$. The reader can think of \eqref{eq:S} as a short-hand notation for the same equation written in integral form.
We first need to establish a Feynman--Kac formula for a system of backward ODEs adjoint to \eqref{eq:S}. 

\begin{lemma}\label{le:FK}
For any $t>0$, $1\le\ell\le L$, let $\{u_{k,t,\ell}(s),\ 0\le s\le t\}$ be the unique $\R^L$-valued solution of the following backward system of ODEs:
 \begin{equation}\label{eq:backward} 
\frac{d u_{k,t,\ell}}{ds}(s)-D_k(s)u_{k,t,\ell}(s)+\mathcal{Q}_k(s)u_{k,t,\ell}(s)=0\, ,\quad  0\le s\le t\,,
\end{equation}
whose final value $u_{k,t,\ell}(t)$ equals the vector whose $\ell$-th coordinate equals $1$, all others being $0$. 
Then for any  $0\le s<t$, $1\le\ell'\le L$,
\begin{equation}\label{eq:FK}
 u^{\ell'}_{k,t,\ell}(s)=\E\left[{\bf1}_{X_k(t)=\ell}\exp\left(-\int_s^t \bar{\Gamma}^{X_k(r)}_k(r)dr\right)\Big|X_k(s)=\ell'\right]\,,
 \end{equation}
 where $u^{\ell'}_{k,t,\ell}(s)$ stands for the $\ell'$-th coordinate of the vector $u_{k,t,\ell}(s)$.
\end{lemma}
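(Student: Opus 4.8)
The plan is to prove \eqref{eq:FK} by the classical probabilistic representation underlying Feynman--Kac formulas, adapted to the time-inhomogeneous finite-state jump process $X_k$. First I would record the preliminaries. On the compact interval $[0,t]$ the matrices $D_k(s)$ and $\mathcal Q_k(s)$ are bounded and measurable in $s$: indeed $\bar\Gamma^\ell_k$ is nonnegative and bounded above, thanks to $\beta^{\ell,\ell'}_{k,k'}\le\beta^\ast$, the bound \eqref{est-barF} on $\bar{\mathfrak F}^\ell_k$, and the lower bound \eqref{eqn-barB-lowerbound} on $\bar B^\ell_k$. Hence the linear backward system \eqref{eq:backward} has a unique absolutely continuous solution $s\mapsto u_{k,t,\ell}(s)$ on $[0,t]$, which is moreover bounded there by Gronwall's inequality. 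I would also note that the extra absorbing state $0$ of $X_k$ is irrelevant: conditionally on $\{X_k(s)=\ell'\}$ with $\ell'\ge 1$ the process stays in $\{1,\dots,L\}$, and in \eqref{eq:FK} we have $\ell\ge 1$, so it suffices to work with the $L$-state restriction (extending $u$ by $u^0:=0$ if convenient).

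Next, writing $f(r,m):=u^m_{k,t,\ell}(r)$ as a function on $[0,t]\times\{1,\dots,L\}$, I would introduce
\[
N_s := \exp\!\Big(-\int_0^s \bar\Gamma^{X_k(r)}_k(r)\,dr\Big)\, f(s, X_k(s)),\qquad 0\le s\le t,
\]
and show it is a martingale for the natural filtration $(\mathcal F_s)$ of $X_k$. By Dynkin's formula for time-inhomogeneous Markov chains, $f(s,X_k(s))-f(0,X_k(0))-\int_0^s\big(\partial_r f(r,\cdot)+\mathcal Q_k(r)f(r,\cdot)\big)(X_k(r))\,dr$ is a martingale $M_s$; since the exponential factor is continuous and of finite variation, the product rule gives
\[
dN_s = e^{-\int_0^s \bar\Gamma^{X_k(r)}_k(r)dr}\Big[-\bar\Gamma^{X_k(s)}_k(s)f(s,X_k(s))+\partial_s f(s,X_k(s))+\big(\mathcal Q_k(s)f(s,\cdot)\big)(X_k(s))\Big]ds + e^{-\int_0^s\bar\Gamma^{X_k(r)}_k(r)dr}\,dM_s.
\]
Because $(D_k(s)u)^m=\bar\Gamma^m_k(s)u^m$, the bracket is exactly the $X_k(s)$-coordinate of the left-hand side of \eqref{eq:backward}, hence vanishes, so $N$ is a local martingale; as $\bar\Gamma\ge 0$ and $f$ is bounded on $[0,t]$, the process $N$ is bounded, hence a genuine martingale.

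Finally I would read off the identity $\E[N_t\mid\mathcal F_s]=N_s$. Since $u_{k,t,\ell}(t)$ is the $\ell$-th unit vector, $N_t=e^{-\int_0^t\bar\Gamma^{X_k(r)}_k(r)dr}\mathbf 1_{X_k(t)=\ell}$, while $N_s=e^{-\int_0^s\bar\Gamma^{X_k(r)}_k(r)dr}\,u^{X_k(s)}_{k,t,\ell}(s)$; dividing by the $\mathcal F_s$-measurable factor $e^{-\int_0^s\bar\Gamma^{X_k(r)}_k(r)dr}$ and using the Markov property to replace conditioning on $\mathcal F_s$ by conditioning on $X_k(s)$ yields \eqref{eq:FK} on the event $\{X_k(s)=\ell'\}$. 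The only genuinely technical point is this martingale verification — making Dynkin's formula and the product rule rigorous for the time-inhomogeneous jump process, and checking the boundedness/integrability needed to upgrade the local martingale to a true martingale — but all the required estimates are already available from \eqref{est-barF} and \eqref{eqn-barB-lowerbound}, so this is routine.
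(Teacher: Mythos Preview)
Your proposal is correct and follows essentially the same approach as the paper: both arguments show that the process $\exp\!\big(-\int_s^r \bar\Gamma^{X_k(v)}_k(v)\,dv\big)\,u^{X_k(r)}_{k,t,\ell}(r)$ is a martingale on $[s,t]$ by verifying that its drift is exactly the backward equation \eqref{eq:backward} and hence vanishes, and then read off \eqref{eq:FK} from the martingale property together with the terminal condition. The only cosmetic difference is that the paper writes $X_k$ explicitly as the solution of a Poisson-random-measure driven SDE and computes the semimartingale decomposition of $W_r$ by hand, whereas you invoke Dynkin's formula and the product rule more abstractly; the content is the same.
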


\begin{proof}
In order to simplify our notation, we delete the subindices $k,t,\ell$ and $k$ in the proof. We fix $0\le s<t$. For $s<r\le t$, let $\{u(r),\ s\le r\le t\}$ be the solution of \eqref{eq:backward}. Note that $u\in\bC^1((0,t);\R^L)$.
Let 
\[ V(r) = \exp\left(-\int_s^r\bar{\Gamma}^{X(v)}(v)dv\right),\quad  W(r)=u^{X(r)}(r) V(r)\,.\]
The jump Markov process $X(s)$ has the same law as the solution of the following SDE:
\begin{align*}
X(t)&=X(s)+\sum_{\ell,\ell'}(\ell'-\ell)\int_s^t\int_0^\infty{\bf1}_{X(r^-)=\ell}{\bf1}_{v\le\nu_S^{\ell,\ell'}(r)}Q^{\ell,\ell'}(dr,dv)\\
&=X(s)+\sum_{\ell'}\int_s^r (\ell'-X(v))\nu_S^{X(v),\ell'}(v)dv+
\sum_{\ell,\ell'}(\ell'-\ell)\int_s^r\int_0^\infty{\bf1}_{X(r^-)=\ell}{\bf1}_{v\le\nu_S^{\ell,\ell'}(r)}\bar{Q}^{\ell,\ell'}(dr,dv)\,,
\end{align*}
where $\{Q^{\ell,\ell'},\ 1\le\ell,\ell'\le L\}$ are mutually independent standard PRMs on $\R^2$, and $\bar{Q}^{\ell,\ell'}(dr,dv)=Q^{\ell,\ell'}(dr,dv)-drdv$. From this it follows that
\begin{align*}
W(t)&=W(s)+\int_s^t \left[\frac{du^{X(r)}(r)}{dr}-\bar{\Gamma}^{X(r)}(r)u^{X(r)}(r)\right]V(r)dr \\
& \qquad \quad  +\sum_{\ell}\int_s^t\nu_S^{X(r),\ell}(r)\big[u^{\ell}(r)-u^{X(r)}(r)\big]V(r)dr+M(t)\,,
\end{align*}
where $M(t)$ is a martingale such that $M(s)=0$. We further note that
\begin{align*}
\sum_\ell (u^\ell-u^x)\nu^{x,\ell}(v)&=\sum_{\ell\not=x}(u^\ell-u^x)\nu^{x,\ell}(v)\\
&=(\mathcal{Q}u)^x\,.
\end{align*}
From the above formulas, we deduce that
\begin{align*}
W(t)&=W(s)+\int_s^t\left[\frac{du^{X(r)}(r)}{dr}-(Du)^{X(r)}(r)+(\mathcal{Q}u)^{X(r)}(r)\right]V(r)dr+M(t)\\
&=W(s)+M(t)\,,
\end{align*}
where we have used the fact that $u$ solves \eqref{eq:backward}.
Taking the conditional expectation given that  $X(s)=\ell'$ in the last identity yields the formula \eqref{eq:FK},
since 
\begin{align*}
W(t)&={\bf1}_{X(t)=\ell}\exp\left(-\int_s^t\bar{\Gamma}^{X(v)}(v)dv\right),\ \text{ and}\\
W(s)&=u^{X(s)}(s)\,.
\end{align*}
\end{proof}

\begin{remark}
Would the Markov process $X_k$  be a diffusion, then the system of ODEs \eqref{eq:backward} would be replaced by a parabolic PDE, and a similar Feynman--Kac formula in such a situation is well-known, see, e.g., Chapter 3.8 in \cite{PaRa}.
\end{remark}

We can now derive an explicit formula for $\bar{S}^\ell_k(t)$.
\begin{prop}\label{pro:FK}
The solution of equation \eqref{eq:S} is given by the following formula: for any $1\le k\le K$,
$1\le\ell\le L$, $t>0$,
\begin{equation}\label{eq:FKr}
\bar{S}^\ell_k(t)=\bar{S}_k(0)\E\left[{\bf1}_{X_k(t)=\ell}\exp\left(-\int_0^t \bar{\Gamma}^{X_k(s)}_k(s)ds\right)\right],
\end{equation} 
where $X_k$ is initialized as indicated above. 
\end{prop}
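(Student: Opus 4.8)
The plan is to deduce \eqref{eq:FKr} from the Feynman--Kac formula of Lemma~\ref{le:FK} by a duality (adjoint) argument relating the forward ODE \eqref{eq:S} for $\bar S_k(t)$ to the backward ODEs \eqref{eq:backward} for the family $\{u_{k,t,\ell}(s)\}$. First I would fix $t>0$ and $1\le\ell\le L$ and consider the scalar function $s\mapsto \varphi(s):=\langle u_{k,t,\ell}(s),\bar S_k(s)\rangle=\sum_{\ell'}u^{\ell'}_{k,t,\ell}(s)\,\bar S^{\ell'}_k(s)$ on $[0,t]$. Differentiating and using \eqref{eq:S} for $\tfrac{d}{ds}\bar S_k(s)$ and \eqref{eq:backward} for $\tfrac{d}{ds}u_{k,t,\ell}(s)$, the two right-hand sides are, respectively, $-D_k\bar S_k+\mathcal Q^\ast_k\bar S_k$ and $D_k u_{k,t,\ell}-\mathcal Q_k u_{k,t,\ell}$. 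Hence
\[
\frac{d\varphi}{ds}(s)=\big\langle D_k u_{k,t,\ell}-\mathcal Q_k u_{k,t,\ell},\,\bar S_k\big\rangle+\big\langle u_{k,t,\ell},\,-D_k\bar S_k+\mathcal Q^\ast_k\bar S_k\big\rangle=0,
\]
since $D_k$ is symmetric (diagonal) and $\langle \mathcal Q_k u,\bar S\rangle=\langle u,\mathcal Q^\ast_k\bar S\rangle$ by definition of the transpose, so the two pairs of terms cancel. Therefore $\varphi$ is constant on $[0,t]$, and in particular $\varphi(t)=\varphi(0)$.

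Next I would evaluate the two endpoints. At $s=t$, the final condition $u_{k,t,\ell}(t)=e_\ell$ (the $\ell$-th basis vector) gives $\varphi(t)=\bar S^\ell_k(t)$. At $s=0$, using the Feynman--Kac representation \eqref{eq:FK} for each coordinate $u^{\ell'}_{k,t,\ell}(0)$ and conditioning on $X_k(0)$,
\[
\varphi(0)=\sum_{\ell'}\bar S^{\ell'}_k(0)\,\E\!\left[{\bf1}_{X_k(t)=\ell}\exp\!\Big(-\int_0^t\bar\Gamma^{X_k(r)}_k(r)\,dr\Big)\,\Big|\,X_k(0)=\ell'\right]
=\E\!\left[{\bf1}_{X_k(t)=\ell}\,\bar S^{X_k(0)}_k(0)\exp\!\Big(-\int_0^t\bar\Gamma^{X_k(r)}_k(r)\,dr\Big)\right],
\]
where the last equality is just the tower property together with the prescribed initial law $\P(X_k(0)=\ell')=\bar S^{\ell'}_k(0)$, $\P(X_k(0)=0)=1-\sum_{\ell'}\bar S^{\ell'}_k(0)$ (the atom at $0$ contributes nothing because $\bar S^0_k(0)=0$ by convention, or equivalently the summand vanishes). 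Equating $\varphi(t)=\varphi(0)$ yields exactly \eqref{eq:FKr}. One should also note that \eqref{eq:FK} in Lemma~\ref{le:FK} is stated for $0\le s<t$; the value at $s=0$ is covered since we take $s=0<t$, and continuity of $u_{k,t,\ell}$ up to $s=0$ (it lies in $\bC^1$) makes the endpoint evaluation legitimate.

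The only genuine points requiring a little care — and hence the ``main obstacle'', though it is mild — are: (i) justifying that $\varphi$ is differentiable and that the product rule applies, which follows from $u_{k,t,\ell}\in\bC^1([0,t];\R^L)$ and $\bar S_k$ being $\bC^1$ (indeed $C^1$ since the integrands in \eqref{eq:limitLLN} are bounded and, in the relevant regularity, continuous, so $\bar S_k$ solves the ODE \eqref{eq:S} classically); and (ii) checking the algebraic cancellation, i.e. that $\langle \mathcal Q_k u,\bar S\rangle-\langle u,\mathcal Q^\ast_k\bar S\rangle=0$ and that the two $D_k$-terms cancel, which is immediate from $D_k$ diagonal and the definition $\mathcal Q^\ast_k=\mathcal Q_k^{\top}$. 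No fixed-point or uniqueness argument is needed here: existence and uniqueness of $\bar S_k$ come from Theorem~\ref{thm-limit-unique} and of $u_{k,t,\ell}$ from Lemma~\ref{le:FK}, so the identity \eqref{eq:FKr} is simply read off the conserved quantity $\varphi$.
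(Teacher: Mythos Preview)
Your proposal is correct and follows essentially the same approach as the paper: both arguments show that the inner product $\langle u_{k,t,\ell}(s),\bar S_k(s)\rangle$ is constant in $s$ by the duality between \eqref{eq:S} and \eqref{eq:backward}, then read off \eqref{eq:FKr} by evaluating at $s=t$ (using the terminal condition $u_{k,t,\ell}(t)=e_\ell$) and at $s=0$ (using the Feynman--Kac formula \eqref{eq:FK}). You have simply written out the cancellation and the regularity justifications more explicitly than the paper, which calls the constancy of the pairing ``obvious''.
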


\begin{proof}
The duality between equations \eqref{eq:S} and \eqref{eq:backward} is expressed by the obvious fact that
\[ \frac{d}{ds}(\bar{{\bf S}}_k(s),u_{k,t,\ell}(s))=0,\quad 0\le s\le t,\] 
hence $(\bar{{\bf S}}_k(t),u_{k,t,\ell}(t))=(\bar{{\bf S}}_k(0),u_{k,t,\ell}(0))$. 
Here we use $(\cdot, \cdot)$ as the scalar product in $\R^L$.
Recall that by our choice of $u_{k,t,\ell}(t)$, $(\bar{{\bf S}}_k(t),u_{k,t,\ell}(t))=\bar{S}^\ell_k(t)$.
Now we deduce from \eqref{eq:FK} with $s=0$ that
\begin{align*}
 (\bar{{\bf S}}_k(0),u_{k,t,\ell}(0))&=\sum_{\ell'}\bar{S}_k^{\ell'}(0)
\E\left[{\bf1}_{X_k(t)=\ell}\exp\left(-\int_0^t \bar{\Gamma}^{X_k(r)}_k(r)dr\right)\Big|X_k(0)=\ell'\right]\\
&=\bar{S}_k(0)\sum_{\ell'}
\E\left[{\bf1}_{X_k(t)=\ell}\exp\left(-\int_0^t \bar{\Gamma}^{X_k(r)}_k(r)dr\right)\Big|X_k(0)=\ell'\right]\P(X_k(0)=\ell')\\
&=\bar{S}_k(0)\E\left[{\bf1}_{X_k(t)=\ell}\exp\left(-\int_0^t \bar{\Gamma}^{X_k(r)}_k(r)dr\right)\right]\,.
\end{align*}
The result follows from the last three identities. 
\end{proof}

\subsection{An auxiliary system of Poisson-driven stochastic equations} \label{sec-McKV-SDE}

We want to associate to a collection of mutually independent standard PRMs $\{Q^\ell_{k},\  k \in \sK, \ell \in \sL\}$ on $\R^2_+$ and a family of mutually independent processes $ \{X_k(t), t \geq 0, k \in \sK\} $, also independent from the $ \lbrace Q^\ell_k, k \in \sK, \ell \in \sL \}$, the processes $\{A^\ell_{k}(t),\  k \in \sK, \ell \in \sL\}$, which is  the solution of the following system of stochastic equations: 

\begin{equation}\label{eq:McKV}
\begin{split}
A^\ell_{k}(t)&=\int_0^t\int_0^\infty{\bf1}_{A_{k}(s^-)=0}{\bf1}_{X_k(s)=\ell}{\bf1}_{u\le  \bar{\Gamma}^{\ell}_k(s^-)}   Q^\ell_k(ds,du)\,,\quad \text{ with } A_{k}(t)=\sum_\ell A^\ell_k(t)\,,\\
\bar{\mathfrak{G}}^{\ell}_{k}(t)&=\bar{\lambda}^0_{k}(t)\sum_{\ell'=1}^L\bar{I}^{\ell'}_k(0)q^{\ell',\ell}_k(0,t)
+\bar{S}_k(0)\E\left[{\lambda}_k(t-\tau_{k})q^{X_k(\tau_{k}),\ell}_k(\tau_{k},t)\right]
\,,\\
 \bar{S}^\ell_k(t)&=\bar{S}^\ell_k(0)-\int_0^t \bar{S}^\ell_k(s)\bar{\Gamma}^\ell_k(s)ds
+\sum_{\ell'=1}^L\int_0^t\nu^{\ell',\ell}_{S,k}(s)\bar{S}^{\ell'}_k(s)ds\,, \\
\bar{I}^{\ell}_k(t) &=  \bar{I}^{\ell}_k(0)  - \sum_{\ell'=1}^L \bar{I}^{\ell'}_k(0) 
 \int_0^t  q_k^{\ell', \ell}(0,s)  F_{k}^0(ds)   + \int_0^t \bar{S}^\ell_k(s)\bar{\Gamma}^\ell_k(s) ds \\
 & \quad -   \sum_{\ell'=1}^{L} \int_0^t  \int_0^{t-s}   q_k^{\ell', \ell}(s,s+u)  F_k(du)  \bar{S}^{\ell'}_k(s)\bar{\Gamma}^{\ell'}_k(s)  ds  +  \sum_{\ell'=1}^L   \int_0^t  \nu^{\ell',\ell}_{I,k}(s) \bar{I}^{\ell'}_k(s) ds\,, \\
\bar{R}^{\ell}_k(t) &=    \sum_{\ell'=1}^L \bar{I}^{\ell'}_k(0) \int_0^t  q_k^{\ell', \ell}(0,s)  F^0_{k}(ds) +   \sum_{\ell'=1}^{L} \int_0^t  \int_0^{t-s}   q_k^{\ell', \ell}(s,s+u)  F_k(du)  \bar{S}^{\ell'}_k(s)\bar{\Gamma}^{\ell'}_k(s)  ds \\
& \quad +  \sum_{\ell'=1}^L   \int_0^t  \nu^{\ell',\ell}_{R,k} (s)\bar{R}^{\ell'}_k(s)   ds\,,
\end{split}
\end{equation} 
where
\begin{align*}
 \bar{\Gamma}^\ell_k(t) &=(\bar{B}^\ell_k(t))^{-\gamma}\sum_{k'}\sum_{\ell'}\beta^{\ell,\ell'}_{k,k'}(t)\bar{\mathfrak{G}}^{\ell'}_{k'}(t)\,, \\
 \tau_{k}&=\inf\{t>0,\ A_{k}(t)=1\}\,,  \\
\bar{B}^\ell_k(t)&= \bar{S}^\ell_k(t)+ \bar{I}^\ell_k(t)+ \bar{R}^\ell_k(t)\,.
\end{align*}

\begin{remark} \label{rem-McKV-mpmg}
In this system of stochastic equations, the laws of the random functions $X_k$ and $\lambda_k$ as well as that of the PRMs
$Q^\ell_k$  are given. However, $\tau_k$ is an unknown of this equation, whose law enters the coefficient on the second line. 
Recall Remark \ref{rem-McKV-hom}. We can thus regard these equations as a McKean--Vlasov stochastic equations, and also the following as a propagation of chaos result for the times of infection of the various initially susceptible individuals.  
\end{remark}

\begin{remark}
In the case $\gamma=0$, one can instead define the following simpler system of Poisson-driven SDEs:
\begin{equation*}\label{eq:McKV-gamma0}
\begin{split}
A^\ell_{k}(t)&=\int_0^t\int_0^\infty{\bf1}_{A_{k}(s^-)=0}{\bf1}_{X_k(s)=\ell}{\bf1}_{u\le  \bar{\Gamma}^{\ell}_k(s^-)}   Q^\ell_k(ds,du)\,,\quad \text{ with } A_{k}(t)=\sum_\ell A^\ell_k(t)\,,\\
\bar{\mathfrak{G}}^{\ell}_{k}(t)&=\bar{\lambda}^0_{k}(t)\sum_{\ell'=1}^L\bar{I}^{\ell'}_k(0)q^{\ell',\ell}_k(0,t)
+\bar{S}_k(0)\E\left[{\lambda}_k(t-\tau_{k})q^{X_k(\tau_{k}),\ell}_k(\tau_{k},t)\right]
\,,\\
\end{split}
\end{equation*} 
where
\begin{align*}
 \bar{\Gamma}^\ell_k(t) &=\sum_{k'}\sum_{\ell'}\beta^{\ell,\ell'}_{k,k'}(t)\bar{\mathfrak{G}}^{\ell'}_{k'}(t)\,, \\
 \tau_{k}&=\inf\{t>0,\ A_{k}(t)=1\}\,. 
\end{align*}
The proofs below can be simplified in this case, see also Remark \ref{rem-proof-gamma0}. 
\end{remark}

We first need to show:
\begin{lemma}\label{le:EU}
The system of equations \eqref{eq:McKV} has a unique solution, which is such that 
$\bar{\mathfrak{G}}^{\ell}_{k}\equiv\bar{\mathfrak{F}}^{\ell}_{k}$, for all $k \in \sK, \ell \in \sL$.
\end{lemma}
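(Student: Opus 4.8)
The plan is to mimic the proof of Lemma~\ref{le:MKV} from the homogeneous case, but now carrying along the extra structure coming from the migration processes $X_k$ and the auxiliary linear equations for $\bar S^\ell_k, \bar I^\ell_k, \bar R^\ell_k$. The key observation is that, just as in the homogeneous case, the processes $\bar S^\ell_k, \bar I^\ell_k, \bar R^\ell_k$ appearing in \eqref{eq:McKV} are \emph{not} genuine unknowns driven by the SDE: they are defined by the same Volterra system \eqref{eq:limitLLN}--\eqref{eqn-barGamma} once one knows $\bar{\mathfrak{G}}^\ell_k$, because the $\bar I, \bar R$ lines of \eqref{eq:McKV} coincide verbatim with those of \eqref{eq:limitLLN} (with $\bar{\mathfrak F}$ replaced by $\bar{\mathfrak G}$ through $\bar\Gamma$). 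So the substantive content is the existence and uniqueness of the "force of infection" coordinate $\bar{\mathfrak G}^\ell_k$.

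First I would set up a fixed-point map on the space of admissible intensity vectors: given a candidate $m = (m^\ell_k)_{k\in\sK,\ell\in\sL}$ of measurable functions with $0\le m^\ell_k(t)\le \lambda^\ast \beta^\ast LK$ (using \eqref{est-barF} as the a priori bound) — or more precisely given a candidate $\bar\Gamma = (\bar\Gamma^\ell_k)$ satisfying the corresponding bound — let $A^{\ell,(m)}_{j,k}$ solve the first line of \eqref{eq:McKV} with $\bar\Gamma$ frozen to $m$, set $\tau^{(m)}_k = \inf\{t>0: A^{(m)}_k(t)=1\}$ (with $A^{(m)}_k = \sum_\ell A^{\ell,(m)}_k$), and define $\bar{\mathfrak G}^{\ell,(m)}_k$ by the right-hand side of the second line of \eqref{eq:McKV}. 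The crucial computation, the analogue of \eqref{eqn-mfG-p1}, is to rewrite $\E[\bar S^{X_k(0)}_k(0)\lambda_k(t-\tau^{(m)}_k) q^{X_k(\tau^{(m)}_k),\ell}_k(\tau^{(m)}_k,t)]$ explicitly in terms of $m$. Writing $\lambda_k(t-\tau^{(m)}_k)q^{\cdot,\ell}_k(\cdot,t) = \int_0^t \bar\lambda_k(t-s)\,q^{X_k(s),\ell}_k(s,t)\, dA^{(m)}_k(s)$, conditioning on the path of $X_k$, and using that on $\{A^{(m)}_k(s^-)=0\}$ the compensator of $dA^{\ell,(m)}_k$ is $\mathbf 1_{X_k(s)=\ell} m^\ell_k(s)\,ds$, this expectation becomes an integral against $\bar\lambda_k(t-s) q^{\cdot,\ell}_k(s,t)$ weighted by the probability that the individual has not been infected and is in patch $\ell$ at time $s$. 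By Lemma~\ref{le:FK}/Proposition~\ref{pro:FK} (applied with $\bar\Gamma$ replaced by $m$), that survival-and-location probability is exactly $\bar S^{\ell,(m)}_k(s)$, the solution of \eqref{eq:S} with $D_k$ built from $m$. Hence $\bar{\mathfrak G}^{\ell,(m)}_k(t) = \bar\lambda^0_k(t)\sum_{\ell'}\bar I^{\ell'}_k(0)q^{\ell',\ell}_k(0,t) + \sum_{\ell'}\int_0^t \bar\lambda_k(t-s)\bar S^{\ell',(m)}_k(s)\, m^{\ell'}_k(s)\, q^{\ell',\ell}_k(s,t)\,ds$, i.e. exactly the second line of \eqref{eq:limitLLN} with $\bar\Gamma=m$.

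Therefore the fixed-point equation $\bar\Gamma = \bar\Gamma^{(\bar\Gamma)}$ (equivalently $\bar{\mathfrak G} = \bar{\mathfrak G}^{(\bar{\mathfrak G})}$ once one couples in the corresponding $\bar S, \bar I, \bar R, \bar B$ via the last three lines of \eqref{eq:McKV}) is \emph{precisely} the statement that $(\bar S^\ell_k, \bar{\mathfrak G}^\ell_k, \bar I^\ell_k, \bar R^\ell_k)$ solves the system \eqref{eq:limitLLN}--\eqref{eqn-barGamma}. By Theorem~\ref{thm-limit-unique} that system has a unique solution, so there is a unique admissible $\bar\Gamma$ fixed point, and it yields $\bar{\mathfrak G}^\ell_k\equiv\bar{\mathfrak F}^\ell_k$; feeding this back into the first line of \eqref{eq:McKV} determines $(A^\ell_k)$ uniquely (the intensity being now a fixed $\bD$-function bounded by $\lambda^\ast\beta^\ast LK$, the Poisson-driven SDE has a pathwise unique solution, exactly as for $A^{(m)}$ in Lemma~\ref{le:MKV}). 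This establishes existence and uniqueness for \eqref{eq:McKV}.

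I expect the main obstacle to be the bookkeeping in the conditional-expectation computation of the second line: one must justify interchanging the expectation over $X_k$ with the $dA^{(m)}_k$-integral, identify the $X_k$-path-conditional compensator of $A^{\ell,(m)}_k$ correctly (it involves both the indicator $\mathbf 1_{X_k(s)=\ell}$ and the not-yet-infected indicator), and then recognize the resulting quantity $\P(A^{(m)}_k(s)=0,\ X_k(s)=\ell)$ weighted by the initial law $\bar S^{X_k(0)}_k(0)$ as the solution of the forward ODE \eqref{eq:S} — this is exactly where Proposition~\ref{pro:FK} is needed, and it is the step that has no counterpart in the homogeneous proof beyond the elementary formula \eqref{eqn-barS-solution}. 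Once that identification is made, the reduction to Theorem~\ref{thm-limit-unique} is immediate. A minor additional point is to check the a priori bound ensuring the fixed-point map is well-defined on a suitable bounded set, for which \eqref{est-barF} (together with $\bar B^\ell_k \ge C^\ell_{k,T} > 0$ from \eqref{eqn-barB-lowerbound}) suffices.
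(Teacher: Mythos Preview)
Your proposal is correct and follows essentially the same route as the paper's proof: set up a fixed-point map in the force-of-infection variable, compute the expectation in the second line of \eqref{eq:McKV} by passing to the compensator of $A^{(m)}_k$ conditional on $X_k$, invoke Proposition~\ref{pro:FK} to identify the resulting survival-and-location quantity as $\bar S^{\ell',(m)}_k(s)$, and conclude that the fixed-point equation coincides with \eqref{eq:limitLLN}, whence Theorem~\ref{thm-limit-unique} gives existence, uniqueness, and $\bar{\mathfrak G}^\ell_k\equiv\bar{\mathfrak F}^\ell_k$. The only cosmetic difference is that the paper parametrizes the fixed-point map by $m=\bar{\mathfrak G}$ (building $\bar\Gamma^{(m)}$ from $m$ via $\bar B^{(m)}$), whereas you alternately parametrize by $\bar\Gamma$ directly; both are equivalent.
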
 
\begin{proof}
To any $\{m^{\ell}_{k},\ k \in \sK, \ell \in \sL\}\in \bD^{LK}$ which satisfies $\inf_{\ell,k,0\leq t \leq T}m^{\ell}_{k}(t)\ge0$
and $\sup_{0\le t\le T,\ell,k}m^{\ell}_{k}(t)\le\lambda^\ast$ for all $T$, we associate $\{A^{(m)}_{k},\ 1\le k\le K\}$,
which solves
\begin{align*}
A^{(m)}_{\ell,k}(t)=\int_0^t\int_0^\infty{\bf1}_{A^{(m)}_{k}(s^-)=0}{\bf1}_{X_k(s)=\ell}{\bf1}_{u\le\bar{\Gamma}^{(m)}_{\ell,k}(s^-)}Q^\ell_{k}(ds,du),\ \text{ with }A^{(m)}_{k}(t)=\sum_\ell A^{(m)}_{\ell,k}(t),
\end{align*}
where $X_k(s)$ is as above a $\{0,1,\ldots,L\}$-valued Markov jump process
which is such that $\P(X_k(0)=\ell)=\bar{S}^\ell_k(0) $, $1\le\ell\le L$, $\P(X_k(0)=0)=1-\sum_{\ell'}\bar{S}^{\ell'}_k(0)$, and starting from $0$, $X_k(t)=0$ for all $t>0$, and 
\[\bar{\Gamma}^{(m)}_{\ell,k}(t)=\big(\bar{B}^{(m)}_{\ell,k}(t)\big)^{-\gamma}\sum_{k',\ell'}\beta^{\ell,\ell'}_{k,k'}(t)m^{\ell'}_{k'}(t)\,,\]
with $\bar{B}^{(m)}_{\ell,k}(t)=\bar{S}^{(m)}_{\ell,k}(t)+\bar{I}^{(m)}_{\ell,k}(t)+\bar{R}^{(m)}_{\ell,k}(t)$, where
$(\bar{S}^{(m)}_{\ell,k}(t),\bar{I}^{(m)}_{\ell,k}(t),\bar{R}^{(m)}_{\ell,k}(t))$ solves  the last three lines of 
\eqref{eq:McKV}, with $\bar{\Gamma}^\ell_k$ replaced by $\bar{\Gamma}^{(m)}_{\ell,k}$. 
Note that Theorem 
\ref{thm-limit-unique} applies to this system of equations. In particular $\bar{B}^{(m)}_{\ell,k}(t)$ satisfies clearly the lower bound which we have established for $\bar{B}^\ell_k(t)$. 
We moreover define $\tau^{(m)}_{k}=\inf\{t>0,\  A^{(m)}_{k}(t)=1\}$. 

The result will follow from the existence and uniqueness of $m^\ast:=\big\{m^{\ast,\ell}_{k},\ k \in \sK, \ell \in \sL\big\}$ such that $m^\ast=\bar{\mathfrak{G}}^{(m^\ast)}$, where
\begin{align*}
\bar{\mathfrak{G}}^{(m)}_{\ell,k}(t)&=\bar{\lambda}^0_{k}(t)\sum_{\ell'=1}^L\bar{I}^{\ell'}_k(0)q^{\ell',\ell}_k(0,t)
+\bar{S}_k(0)\E\left[{\lambda}_k(t-\tau^{(m)}_{k})
q^{X_k(\tau^{(m)}_{k}),\ell}_k(\tau^{(m)}_{k},t)\right]\\
&=\bar{\lambda}^0_{k}(t)\sum_{\ell'=1}^L\bar{I}^{\ell'}_k(0)q^{\ell',\ell}_k(0,t) 
+\bar{S}_k(0)\E\left[\int_0^t{\lambda}_k(t-s)q^{X_k(s),\ell}_k(s,t)dA^{(m)}_{k}(s)\right]\\
&=\bar{\lambda}^0_{k}(t)\sum_{\ell'=1}^L\bar{I}^{\ell'}_k(0)q^{\ell',\ell}_k(0,t) \\
& \quad +\bar{S}_k(0)\E\left[\int_0^t{\lambda}_k(t-s)q^{X_k(s),\ell}_k(s,t)\P(A^{(m)}_{k}(s)=0|X_k)\bar{\Gamma}^{(m)}_{X_k(s),k}(s)ds\right]\\
&=\bar{\lambda}^0_{k}(t)\sum_{\ell'=1}^L\bar{I}^{\ell'}_k(0)q^{\ell',\ell}_k(0,t)\\
&\quad
+\bar{S}_k(0)\E\left[\int_0^t\!{\lambda}_k(t-s)q^{X_k(s),\ell}_k(s,t)\bar{\Gamma}^{(m)}_{X_k(s),k}(s)
\exp\!\left(\!\!-\!\!\int_0^s\!\!\bar{\Gamma}^{(m)}_{X_k(r), k}(r)dr\!\!\right)ds\right]\\
&=\bar{\lambda}^0_{k}(t)\sum_{\ell'=1}^L\bar{I}^{\ell'}_k(0)q^{\ell',\ell}_k(0,t)\\
&\quad+\bar{S}_k(0)\sum_{\ell'=1}^L\E\left[\!\int_0^t\!{\bf1}_{X_k(s)=\ell'}{\lambda}_k(t-s)q^{\ell',\ell}_k(s,t)\bar{\Gamma}^{(m)}_{\ell',k}(s)
\exp\!\left(\!\!-\!\!\int_0^s\!\!\bar{\Gamma}^{(m)}_{X_k(r), k}(r)dr\!\!\right)ds\right]\\
&=\bar{\lambda}^0_{k}(t)\sum_{\ell'=1}^L\bar{I}^{\ell'}_k(0)q^{\ell',\ell}_k(0,t)
+\sum_{\ell'=1}^L\int_0^t \bar{S}^{(m),\ell'}_{k}(s)\bar{\lambda}_k(t-s)q^{\ell',\ell}_k(s,t)\bar{\Gamma}^{(m)}_{\ell',k}(s)ds\,,
\end{align*}
where in the last equality we have defined 
\[ \bar{S}^{(m),\ell'}_{k}(t):=\bar{S}_k(0)\E\left[{\bf1}_{X_k(t)=\ell'}\exp\left(-\int_0^t \bar{\Gamma}^{(m)}_{X_k(s), k}(s)ds\right)\right]\,.\]
It follows from \eqref{eq:limitLLN} and \eqref{eq:FKr} that $m=\bar{\mathfrak{G}}^{(m)}$ iff 
$(\bar{S}^{(m)},\bar{\mathfrak{G}}^{(m)},\bar{I}^{(m)},\bar{R}^{(m)})$ solves \eqref{eq:limitLLN}. Since that system of integral equations has a unique solution, the equation
$m=\bar{\mathfrak{G}}^{(m)}$ has a unique solution $m^\ast$, 
and moreover $m^{\ast,\ell}_k\equiv\bar{\mathfrak{F}}^\ell_k$ for all $\ell$, $k$.
\end{proof}

\subsection{Estimates using the i.i.d. processes constructed from the Poisson-driven stochastic equations \eqref{eq:McKV} } \label{sec-estimates-iid}

Recall $A_{j,k}^{N,\ell}(t)$ in \eqref{eqn-A-N-jk}. 
Let $\big\{A_{j,k}^\ell(t): j \ge 1\big\}$ be the solution of  \eqref{eq:McKV} with $(Q^\ell_{k}, X_{k}, \lambda_k)$ replaced by $(Q^\ell_{j,k}, X_{j,k}, \lambda_{j,k})$ for each $j \ge 1$ (which enter in the definition of  $A^{N,\ell}_{j,k}$). 
We need the following lemma on the approximation of $A^{N,\ell}_{j,k}$ by $A^\ell_{j,k}$ as in Lemma \ref{le:estim}. 
In the sequel, we shall use the notation
\[ \mathcal{E}^N_T:=\left\{\inf_{0\le t\le T, 1\le k\le K, 1\le \ell\le L}\bar{B}^{N,\ell}_k(t)\ge C_{T}^\ast\right\}\,,\]
where  $C^\ast_T:=\frac{1}{4}\min_{k,\ell} C^\ell_{k,T}$, and the $C^\ell_{k,T}$'s are the lower bounds which appear in formula \eqref{eqn-barB-lowerbound}. 
Lemma~\ref{le:BN>c} below establishes that $\P\left((\mathcal{E}^N_T)^c\right)\to 0$ as $N\to\infty$. 
Moreover, we define for any $T>0$ the stopping time
\begin{equation*}\label{stopB}
\tT:=\inf\Big\{t>0,\ \inf_{k,\ell}\bar{B}^{N,\ell}_k(t)< C^\ast_T\Big\}\,.
\end{equation*}
Note that on the event  $\mathcal{E}^N_T$, $\tT\ge T$.
\begin{lemma} \label{lem-An-Ai-diffbound}
For any $T>0$, $k\in\sK$, and $\ell\in\sL$, as $N\to\infty$,
\begin{equation*}
\frac{1}{N} \E \left[\sum_{j=1}^{S^N_k(0)}  \sup_{0\le t \le T\wedge\tT} \Big|A^{N,\ell}_{j,k}(t) - A^{\ell}_{j,k}(t) \Big|\right]  \to 0\,.
\end{equation*}
\end{lemma}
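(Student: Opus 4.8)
The plan is to mimic the proof of Lemma \ref{le:estim} from the homogeneous model, but working on the stopped interval $[0,T\wedge\tT]$ so that the lower bound $\bar B^{N,\ell}_k \geq C^\ast_T$ is available and the map $(x,y,z)\mapsto x/(x+y+z)^\gamma$ appearing in $\bar\Gamma^\ell_k$ is Lipschitz. First I would introduce, for fixed $k,\ell$, the auxiliary non-truncated counting processes $\tilde A^{N,\ell}_{j,k}$ and $\tilde A^\ell_{j,k}$ driven by $Q^\ell_{j,k}$ with intensities ${\bf 1}_{X_{j,k}(s)=\ell}\bar\Gamma^{N,\ell}_k(s^-)$ and ${\bf 1}_{X_{j,k}(s)=\ell}\bar\Gamma^\ell_k(s^-)$ respectively, and use that $A^{N,\ell}_{j,k}(t)=\tilde A^{N,\ell}_{j,k}(t\wedge\tau^N_{j,k})$ and similarly for the limit, so that $\sup_{t\le T\wedge\tT}|A^{N,\ell}_{j,k}(t)-A^\ell_{j,k}(t)|$ is bounded by $\int_0^{T\wedge\tT}\!\int Q^\ell_{j,k}(ds,du)$ over the region between the two intensities; taking expectations gives the bound $\E\int_0^{T\wedge\tT}{\bf 1}_{X_{j,k}(s)=\ell}\,|\bar\Gamma^{N,\ell}_k(s)-\bar\Gamma^\ell_k(s)|\,ds$, and summing over $j$ and dividing by $N$ leads to a term of the form $\E\int_0^{T\wedge\tT}\sum_{k',\ell'}|\bar{\mathfrak F}^{N,\ell'}_{k'}(s)-\bar{\mathfrak G}^{\ell'}_{k'}(s)|\,ds$ plus Lipschitz contributions from $(\bar B^{N,\ell}_k)^{-\gamma}$ that involve $|\bar S^{N,\ell'}_k-\bar S^{\ell'}_k|$, $|\bar I^{N,\ell'}_k-\bar I^{\ell'}_k|$, $|\bar R^{N,\ell'}_k-\bar R^{\ell'}_k|$.

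Next I would estimate $\E|\bar{\mathfrak F}^{N,\ell'}_{k'}(t)-\bar{\mathfrak G}^{\ell'}_{k'}(t)|$ by decomposing $\bar{\mathfrak F}^{N,\ell'}_{k'}$ as in \eqref{eq:FN}: an initially-infected term which is a fluctuation of $I^N_{k'}(0)$ i.i.d. bounded random functions and hence $O(N^{-1/2})$ after centering (using Cauchy--Schwarz as in \eqref{estim3}); a newly-infected term that I split, exactly as in \eqref{estim41}, into (i) a fluctuation of the i.i.d. sequence $\{\lambda_{j,k'}(t-\tau_{j,k'}){\bf 1}_{X_{j,k'}(\tau_{j,k'})=\ell''}{\bf 1}_{Y^{\tau_{j,k'},\ell''}_{j,k'}(t)=\ell'}\}_j$ around its mean, which is $O(N^{-1/2})$, plus (ii) a comparison term controlled by $\lambda^\ast\,\P(\tau^N_{j,k'}\wedge t\neq\tau_{j,k'}\wedge t\ \text{or the associated patch labels differ})$, which via Markov's inequality and the integer-valuedness of $A^{N,\ell}_{j,k'}$, $A^\ell_{j,k'}$ is bounded by $\E\sum_{\ell}\sup_{r\le t}|A^{N,\ell}_{j,k'}(r)-A^\ell_{j,k'}(r)|$. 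The initial-condition discrepancies $|\bar S^{N,\ell}_k(0)-\bar S^\ell_k(0)|$ etc.\ contribute an $\ep_N\to0$ term via the standing convergence assumptions. Similarly $\E|\bar S^{N,\ell'}_{k'}(t)-\bar S^{\ell'}_{k'}(t)|$, $\E|\bar I^{N,\ell'}_{k'}(t)-\bar I^{\ell'}_{k'}(t)|$ and $\E|\bar R^{N,\ell'}_{k'}(t)-\bar R^{\ell'}_{k'}(t)|$ are estimated from \eqref{eq:SN}--\eqref{eq:RN} and \eqref{eq:limitLLN}: the Poisson-process migration terms are handled by the $L^2$ bound on a compensated Poisson process (giving $O(N^{-1/2})$ after centering), the jump-chain terms $\sum_j A^{N,\ell}_{j,k}$, $\sum_j{\bf 1}_{\tau^N_{j,k}+\eta_{j,k}\le t}(\cdots)$ are compared with their i.i.d. analogues (again up to $\E\sum_\ell\sup_{r\le t}|A^{N,\ell}_{j,k}-A^\ell_{j,k}|$ and an $O(N^{-1/2})$ LLN fluctuation via \cite{rao1963law}), and the remaining differences feed back into the same quantities.

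I would then assemble everything into a single vector-valued Gronwall argument: setting $\Phi_N(t):=\sum_{k,\ell}\frac{1}{N}\E[\sum_{j=1}^{S^N_k(0)}\sup_{r\le t\wedge\tT}|A^{N,\ell}_{j,k}(r)-A^\ell_{j,k}(r)|]$ together with $\Psi_N(t):=\sum_{k,\ell}\E[\,|\bar S^{N,\ell}_k(t\wedge\tT)-\bar S^\ell_k(t\wedge\tT)|+|\bar I^{N,\ell}_k-\bar I^\ell_k|+|\bar R^{N,\ell}_k-\bar R^\ell_k|\,]$, the estimates above yield $\Phi_N(t)+\Psi_N(t)\le C_{T,\lambda^\ast,\beta^\ast}\big(\ep_N+N^{-1/2}+\int_0^t(\Phi_N(s)+\Psi_N(s))\,ds\big)$, where the constant absorbs the Lipschitz constant of $x/(x+y+z)^\gamma$ on $\{x+y+z\ge C^\ast_T\}$ and the uniform bound \eqref{est-barF}; Gronwall's lemma then gives $\Phi_N(T)\le C\,e^{CT}(\ep_N+N^{-1/2})\to 0$, which is the claim (the factor ${\bf 1}_{X_{j,k}(s)=\ell}$ only helps and does not disturb exchangeability of $\{A^{N,\ell}_{j,k}-A^\ell_{j,k}\}_{j}$). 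The main obstacle, compared with the homogeneous case, is precisely the nonlinear factor $(\bar B^{N,\ell}_k(t))^{-\gamma}$: one must stay on the event $\mathcal E^N_T$ (equivalently stop at $\tT$) to keep it Lipschitz, and one must carry the differences of $\bar S$, $\bar I$, $\bar R$ along \emph{simultaneously} with the differences of the $A$'s and of $\bar{\mathfrak F}$, since $\bar B$ couples all of them — this is why $\bar I$, $\bar R$ appear here even though in the homogeneous proof only $\bar S$ and $\bar{\mathfrak F}$ were needed; additionally the patch-label indicators ${\bf 1}_{X_{j,k}(\tau^N_{j,k})=\ell''}$ and ${\bf 1}_{Y^{\cdot}_{j,k}(\cdot)=\ell'}$ require a little care to show that the event "$A$-processes agree up to time $t$" already forces the relevant infection patch and current patch to agree in distribution, which follows since $X_{j,k}$ (resp.\ $Y$) is common to both constructions and the infection patch is the unique $\ell$ with $A^{N,\ell}_{j,k}=1$.
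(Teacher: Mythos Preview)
Your proposal is correct and essentially matches the paper's argument: the paper likewise stops at $\tT$, bounds $\sup|A^{N,\ell}_{j,k}-A^\ell_{j,k}|$ by $\int|\bar\Gamma^{N,\ell}_k-\bar\Gamma^\ell_k|$, splits the $\bar\Gamma$-difference into a $\bar{\mathfrak F}$-part and a $(\bar B)^{-\gamma}$-Lipschitz part, and closes the Gronwall loop through the $\bar S,\bar I,\bar R$ differences (organized there as three separate auxiliary lemmas rather than one combined $\Phi_N+\Psi_N$ inequality). One point you elide which the paper makes explicit: identifying the LLN limit of $\frac{1}{N}\sum_j A^\ell_{j,k}(t)$ (and of the analogous $\eta$-terms in $\bar I,\bar R$) with $\int_0^t\bar S^\ell_k(s)\bar\Gamma^\ell_k(s)\,ds$ is not automatic from the i.i.d.\ structure alone and uses the Feynman--Kac representation \eqref{eq:FKr} for $\bar S^\ell_k$.
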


Before we prove this Lemma, let us first understand how to bound $\bar{B}^{N,\ell}_k(t)$ from below.

\begin{lemma}\label{le:BN>c}
For any $T>0$,  $k\in\sK$ and $\ell\in\sL$, 
as $N\to\infty$,
\[ \P\left(\inf_{0\le t\le T}\bar{B}^{N,\ell}_k(t)< C^\ast_T \right)\to 0\,,\]
where  $C^\ast_T$ is specified above in the definition of $\mathcal{E}^N_T$. 
\end{lemma}
Note that Lemma \ref{lem-An-Ai-diffbound} and Lemma \ref{le:BN>c} imply that, as
$N\to\infty$,
\[ \frac{1}{N}\sum_{j=1}^{S^N_k(0)}  \sup_{0\le t \le T} \Big|A^{N,\ell}_{j,k}(t) - A^{\ell}_{j,k}(t) \Big|\to0\,,\]
in probability.
\begin{proof}[Proof of Lemma~\ref{le:BN>c}]
By \eqref{eq:SN}, \eqref{eq:IN} and \eqref{eq:RN}, we obtain,
with $\bar{P}^{\ell,\ell'}_{S,k}, \bar{P}^{\ell,\ell'}_{I,k}, \bar{P}^{\ell,\ell'}_{R,k}$, $k\in\sK, \ell, \ell' \in \sL$ the corresponding compensated standard Poisson processes,
\begin{align*}
\bar{B}^{N,\ell}_k(t)
&= \bar{B}^{N,\ell}_k(0)  - \sum_{\ell'=1,  \ell'\neq \ell}^L  \int_0^t\nu^{\ell,\ell'}_{S,k}(s)\bar{S}^{N,\ell}_k(s)ds + \sum_{\ell'=1, \ell'\neq \ell}^L\int_0^t\nu^{\ell',\ell}_{S,k}(s)\bar{S}^{N,\ell'}_k(s)ds  \\
& \quad  - \sum_{\ell'=1, \ell'\neq \ell}^L \int_0^t\nu^{\ell,\ell'}_{I,k}(s)I^{N,\ell}_k(s)ds +  \sum_{\ell'=1, \ell'\neq \ell}^L   \int_0^t  \nu^{\ell',\ell}_{I,k}(s) \bar{I}^{N,\ell'}_k(s) ds  \\
& \quad -  \sum_{\ell'=1, \ell'\neq \ell}^L   \int_0^t \nu^{\ell,\ell'}_{R,k}(s) R^{N,\ell}_k(s)ds  +  \sum_{\ell'=1,  \ell'\neq \ell}^L   \int_0^t  \nu^{\ell',\ell}_{R,k} (s)\bar{R}^{N,\ell'}_k(s)   ds \\
& \quad - N^{-1}\sum_{\ell'=1,  \ell'\neq \ell}^L \bar{P}^{\ell,\ell'}_{S,k}\left(\int_0^t\nu^{\ell,\ell'}_{S,k}(s)S^{N,\ell}_k(s)ds\right) 
+ N^{-1}\sum_{\ell'=1, \ell'\neq \ell}^L \bar{P}^{\ell',\ell}_{S,k}\left(\int_0^t\nu^{\ell',\ell}_{S,k}(s)S^{N,\ell'}_k(s)ds\right) \\
&\quad - N^{-1} \sum_{\ell'=1, \ell'\neq \ell}^L \bar{P}^{\ell,\ell'}_{I,k}\left(\int_0^t\nu^{\ell,\ell'}_{I,k}(s)I^{N,\ell}_k(s)ds\right) 
+ N^{-1}\sum_{\ell'=1, \ell'\neq \ell}^L \bar{P}^{\ell',\ell}_{I,k}\left(\int_0^t\nu^{\ell',\ell}_{I,k}(s)I^{N,\ell'}_k(s)ds\right) \\
& \quad - N^{-1} \sum_{\ell'=1, \ell'\neq \ell}^L \bar{P}^{\ell,\ell'}_{R,k} \left( \int_0^t \nu^{\ell,\ell'}_{R,k}(s) R^{N,\ell}_k(s)ds \right) + N^{-1} \sum_{\ell'=1,  \ell'\neq \ell}^L \bar{P}^{\ell',\ell}_{R,k} \left( \int_0^t \nu^{\ell',\ell}_{R,k}(s) R^{N,\ell'}_k(s)ds \right) \\
& =  \bar{B}^{N,\ell}_k(0) + \int_0^t\underline{\nu}^{\ell,\ell}_k\bar{B}^{N,\ell}_k(s)ds+ \int_0^t Z^{N,\ell}_k(s) ds +\xi^{N,\ell}_k(t) \\
&=e^{\underline{\nu}^{\ell,\ell}_k t}\bar{B}^{N,\ell}_k(0) + 
\int_0^t e^{\underline{\nu}^{\ell,\ell}_k(t-s)}Z^{N,\ell}_k(s)ds+
e^{\underline{\nu}^{\ell,\ell}_k t}\int_0^te^{-\underline{\nu}^{\ell,\ell}_k s}d\xi^{N,\ell}_k(s)\,,
\end{align*}
where $ \underline{\nu}^{\ell,\ell}_k$ has been defined in the proof of Theorem \ref{thm-limit-unique},   
 \begin{align*}
 Z^{N,\ell}_k(s) &=  \sum_{\ell'\neq \ell}\nu^{\ell',\ell}_{S,k}(s) \bar{S}^{N,\ell'}_k(s)+   \sum_{\ell'\neq \ell}\nu^{\ell',\ell}_{I,k}(s)\bar{I}^{N,\ell'}_k(s)+\sum_{\ell'\neq \ell}\nu^{\ell',\ell}_{R,k}(s)\bar{R}^{N,\ell'}_k(s) \\
 &\quad+(\nu^{\ell,\ell}_{S,k}(s)-\underline{\nu}^{\ell,\ell}_k)\bar{S}^{N,\ell}_k(s)+
  (\nu^{\ell,\ell}_{I,k}(s) - \underline{\nu}^{\ell,\ell}_k )  \bar{I}^{N,\ell}_k(s) + 
  (\nu^{\ell,\ell}_{R,k}(s)-  \underline{\nu}^{\ell,\ell}_k)  \bar{R}^{N,\ell}_k(s)   \\
 &\ge0\, ,
 \end{align*} 
 and $\xi^{N,\ell}_k(t)$ is the sum of the last six terms with compensated Poisson processes in the first right hand side. Observe that $\xi^{N,\ell}_k(t)$ is a square integrable martingale with respect to a filtration $\sF^N_t$ which we will specify next, with a quadratic variation which is bounded by $c_T/N$ for $0\le t\le T$. 
 For each $t\ge0$, the $\sigma$--algebra $\sF^N_t$ is generated by $S^{N,\ell}_k(s), I^{N,\ell}_k(s), R^{N,\ell}_k(s)$ for all 
 $\ell\in\sL$, $k\in\sK$ and $0\le s\le t$.
 We deduce from the above computations that 
\[ \bar{B}^{N,\ell}_k(t)\ge e^{\underline{\nu}^{\ell,\ell}_k t}\left[\bar{B}^{N,\ell}_k(0) +
\int_0^te^{-\underline{\nu}^{\ell,\ell}_k s}d\xi^{N,\ell}_k(s)\right]\,.\]
We define the martingale
\[ \zeta^{N,\ell}_k(s):=\int_0^te^{-\underline{\nu}^{\ell,\ell}_ks}d\xi^{N,\ell}_k(s)\,,\]
whose quadratic variation is again bounded by $c_T/N$ for $0\le t\le T$. We define the events 
\begin{align*}
 \mathcal{A}^{N,\ell}_k&:=\left\{\zeta^{N,\ell}_k(t)\ge-\frac{1}{2}\bar{B}^{N,\ell}_k(0) ,\ \forall 0\le t\le T\right\}\,.
\end{align*}
It is easy to verify that on the event $\mathcal{A}^{N,\ell}_k$, for $0\le t\le T$,
\[ \bar{B}^{N,\ell}_k(t)\ge\frac{1}{2}\bar{B}^{N,\ell}_k(0) \exp\left(\underline{\nu}^{\ell,\ell}_k T\right),\]
hence  $\cap_{k,\ell}\mathcal{A}^{N,\ell}_k\subset\mathcal{E}^N_T$,
while $\P(\mathcal{A}^{N,\ell}_k)\ge 1-c'_T/N$. 
Since  
$\P\big(\inf_{\ell,k}\bar{B}^{N,\ell}_k(0)\ge\frac{1}{2}\inf_{\ell,k}\bar{B}^{\ell}_k(0)\big)\to1$  as $N\to\infty$, the result follows.
\end{proof}

\begin{proof}[Proof of Lemma \ref{lem-An-Ai-diffbound}] 
In this proof, $C$ will denote an arbitrary positive constant, and $\ep_N$ an arbitrary sequence of positive numbers which converges to $0$ as $N\to\infty$. Both $C$ and $\ep_N$ may vary from one line to another.
Then,
\begin{align*}
& \Big|A^{N,\ell}_{j,k}(t)  - A^\ell_{j,k}(t) \Big| 
 \le \int_0^t \int_{\bar\Gamma^{N,\ell}_{k}(s^-)\wedge \bar\Gamma^{\ell}_{k}(s)}^ {\bar\Gamma^{N,\ell}_{k}(s^-)\vee \bar\Gamma^{\ell}_{k}(s)} Q^\ell_{j,k}(ds,du)\,, 
\end{align*}
and
\begin{align*}
	\sup_{0 \le r \le t}  \Big|A^{N,\ell}_{j,k}(r)  - A^\ell_{j,k}(r) \Big| \le  \int_0^t \int_{\bar\Gamma^{N,\ell}_{k}(s^-)\wedge \bar\Gamma^{\ell}_{k}(s)}^ {\bar\Gamma^{N,\ell}_{k}(s^-)\vee \bar\Gamma^{\ell}_{k}(s)} Q^\ell_{j,k}(ds,du)\,,
\end{align*}
from which we obtain
\begin{align} \label{eqn-A-jk-sup-bound}
 \E\bigg[ \sup_{0 \le r \le t\wedge\tT}  \Big|A^{N,\ell}_{j,k}(r)  - A^\ell_{j,k}(r) \Big| \bigg]  \le  \E \left[\int_0^{t\wedge\tT}   \big|\bar\Gamma^{N,\ell}_{k}(s) - \bar\Gamma^{\ell}_{k}(s)\big|   ds \right] \,. 
\end{align}
 We have just used the well-known fact that the expectation of the integral of a predictable process w.r.t. a PRM equals the expectation of the integral of the same process w.r.t. the mean measure of the PRM.
We now have
\begin{align} \label{eqn-Gamma-diff-bound}
\big|\bar\Gamma^{N,\ell}_{k}(t) - \bar\Gamma^{\ell}_{k}(t)\big|  
& =\Bigg|  \frac{1}{(\bar{B}^{N,\ell}_k(t))^\gamma}
\sum_{k'}\sum_{\ell'}\beta^{\ell,\ell'}_{k,k'}(t) \bar{\mathfrak{F}}^{N,\ell'}_{k'}(t) -  \frac{1}{(\bar{B}^{\ell}_k(t))^\gamma}
\sum_{k'}\sum_{\ell'}\beta^{\ell,\ell'}_{k,k'}(t) \bar{\mathfrak{F}}^{\ell'}_{k'}(t) \Bigg|   \non  \\
& \le   \frac{1}{(\bar{B}^{\ell}_k(t))^\gamma}
\sum_{k'}\sum_{\ell'}\beta^{\ell,\ell'}_{k,k'}(t)  \Big| \bar{\mathfrak{F}}^{N,\ell'}_{k'}(t)  - \bar{\mathfrak{F}}^{\ell'}_{k'}(t) \Big|   \non \\
& \quad + 
 \Bigg|  \frac{1}{(\bar{B}^{N,\ell}_k(t))^\gamma}-  \frac{1}{(\bar{B}^{\ell}_k(t))^\gamma} \Bigg|  
\sum_{k'}\sum_{\ell'}\beta^{\ell,\ell'}_{k,k'}(t) \bar{\mathfrak{F}}^{N,\ell'}_{k'}(t)  \non  \\
& \le C_T^{-\gamma}  \beta^*\sum_{\ell',k'}   \Big| \bar{\mathfrak{F}}^{N,\ell'}_{k'}(t)  - \bar{\mathfrak{F}}^{\ell'}_{k'}(t) \Big| + \lambda^* \beta^*LK  \Bigg|  \frac{1}{(\bar{B}^{N,\ell}_k(t))^\gamma}-  \frac{1}{(\bar{B}^{\ell}_k(t))^\gamma} \Bigg|\,,  
\end{align}
where we have used for the last inequality both the lower bound \eqref{eqn-barB-lowerbound}, and the fact that
$\bar{\mathfrak{F}}^{N,\ell}_{k}(t) \le \lambda^* (\bar{I}^N_k(0) + \bar{S}^N_k(0) ) \le \lambda^*$ for all $t\ge 0$ and all $k, \ell$, which follows from \eqref{eq:FN}.

By  \eqref{eq:FN} and \eqref{eq:limitLLN}, we obtain
\begin{equation}\label{conv:FN}
\begin{split}
  {\bf1}_{t<\tT}\Big| \bar{\mathfrak{F}}^{N,\ell}_{k}(t)  - \bar{\mathfrak{F}}^{\ell}_{k}(t) \Big|
  & \le \Bigg|  N^{-1}\sum_{j=1}^{I^{N}_k(0)}\lambda_{-j,k}(t){\bf1}_{Y_{j,k}(t)=\ell} - \bar{\lambda}^0_{k}(t)\sum_{\ell'=1}^L\bar{I}^{\ell'}_k(0)q^{\ell',\ell}_k(0,t)\Bigg| \\
  & \quad + {\bf1}_{t<\tT}\Bigg| N^{-1} \sum_{j=1}^{S^{N}_k(0)}\lambda_{j,k}(t-\tau_{j,k}^N)\sum_{\ell'} {\bf1}_{X_{j,k}(\tau_{j,k}^N)=\ell'}{\bf1}_{Y^{\tau_{j,k}^N,\ell'}_{j,k}(t) =\ell}  \\
  & \qquad \quad  -\sum_{\ell'}\int_0^t\bar{\lambda}_k(t-s)\bar{S}^{\ell'}_k(s)\bar{\Gamma}^{\ell'}_k(s)q^{\ell',\ell}_k(s,t) ds  \Bigg|\,. 
\end{split}
\end{equation}
The convergence to $0$ in $ L^1 $ of the first term follows from the law of large numbers.
 Concerning the second term, we first note that
 \begin{align}\label{decomp-FN}
\frac{1}{N} \sum_{j=1}^{S^{N}_k(0)}&\lambda_{j,k}(t-\tau_{j,k}^N)\sum_{\ell'} {\bf1}_{X_{j,k}(\tau_{j,k}^N)=\ell'}{\bf1}_{Y^{\tau_{j,k}^N,\ell'}_{j,k}(t) =\ell}\nonumber
\\&=
\frac{1}{N} \sum_{j=1}^{S^{N}_k(0)}\int_0^t\lambda_{j,k}(t-s)\sum_{\ell'} {\bf1}_{X_{j,k}(s)=\ell'}
{\bf1}_{Y^{s,\ell'}_{j,k}(t) =\ell}d A^{N,\ell'}_{j,k}(s)\nonumber\\
&=\frac{1}{N} \sum_{j=1}^{S^{N}_k(0)}\int_0^t\int_0^\infty\lambda_{j,k}(t-s)\sum_{\ell'} 
{\bf1}_{A^N_{j,k}(s^-)=0}{\bf1}_{X_{j,k}(s)=\ell'}
{\bf1}_{Y^{s,\ell'}_{j,k}(t) =\ell}{\bf1}_{u\le\bar{\Gamma}^{N,\ell'}_k(s^-)}Q^{\ell'}_{j,k}(ds,du)\nonumber\\
&=\frac{1}{N} \sum_{j=1}^{S^{N}_k(0)}\int_0^t\int_0^\infty\lambda_{j,k}(t-s)\sum_{\ell'} [{\bf1}_{A^N_{j,k}(s^-)=0}{\bf1}_{u\le\bar{\Gamma}^{N,\ell'}_k(s^-)}\nonumber\\ &\qquad\qquad\qquad
-{\bf1}_{A_{j,k}(s^-)=0}{\bf1}_{u\le\bar{\Gamma}^{\ell'}_k(s^-)}]{\bf1}_{X_{j,k}(s)=\ell'}{\bf1}_{Y^{s,\ell'}_{j,k}(t) =\ell}Q^{\ell'}_{j,k}(ds,du)\\
&\quad+\frac{1}{N} \sum_{j=1}^{S^{N}_k(0)}\int_0^t\int_0^\infty\lambda_{j,k}(t-s)
{\bf1}_{A_{j,k}(s^-)=0}
\sum_{\ell'} {\bf1}_{u\le\bar{\Gamma}^{\ell'}_k(s^-)}{\bf1}_{X_{j,k}(s)=\ell'}{\bf1}_{Y^{s,\ell'}_{j,k}(t) =\ell}Q^{\ell'}_{j,k}(ds,du)\,.\nonumber
\end{align}
In order to bound the first term on the right of \eqref{decomp-FN}, we first deduce from \eqref{est-barF} and \eqref{eqn-barB-lowerbound} that there exists a constant $C_T$ such that for all $0\le t\le T$, $k\in\sK$ and $\ell\in\sL$, $\bar{\Gamma}^\ell_k(t)\le C_T$. It then follows that, denoting by $\kappa^{N,\ell}_k(t)$ the first term on the right of \eqref{decomp-FN},
\begin{align}\label{estimsupFN}
\E\left[\sup_{0\le s\le t\wedge \sigma^N_T}|\kappa^{N,\ell}_k(s)|\right]
& \le\lambda^\ast\frac{C_T}{N}\E\left[\sum_{j=1}^{S^{N}_k(0)}\sup_{0\le s\le t\wedge\tT}|A_{j,k}(s)-A^N_{j,k}(s)| \right] \non \\
& \quad +\lambda^\ast\sum_{\ell'}\E \left[\int_0^{t\wedge\tT}|\bar{\Gamma}^{N,\ell'}_k(s)-\bar{\Gamma}^{\ell'}_k(s)|ds\right]\,,
\end{align}
while, thanks to the law of large numbers, the second term on the right of \eqref{decomp-FN} converges a.s., as $N\to\infty$, towards 
\begin{align*}
&  \bar{S}_k(0) \E\left[ \int_0^t  \lambda_{1,k}(t-s) {\bf1}_{A_{1,k}(s^-)=0} \sum_{\ell'}  {\bf1}_{X_{1,k}(s)=\ell'}{\bf1}_{Y^{s,\ell'}_{1,k}(t) =\ell}  \bar{\Gamma}^{\ell'}_k(s)ds   \right]  \\
&=   \bar{S}_k(0) \int_0^t\bar{\lambda}_k(t-s)  \E\left[   {\bf1}_{A_{1,k}(s^-)=0} \sum_{\ell'}  {\bf1}_{X_{1,k}(s)=\ell'}{\bf1}_{Y^{s,\ell'}_{1,k}(t) =\ell}    \right]  \bar{\Gamma}^{\ell'}_k(s)ds \\ 
&= \int_0^t\bar{\lambda}_k(t-s)\sum_{\ell'}\bar{S}^{\ell'}_k(s)\bar{\Gamma}^{\ell'}_k(s)q^{\ell',\ell}_k(s,t)ds\,, 
\end{align*}
where we have used the fact that $\P\left(A_{k}(s)=0|X_k\right)=\exp\left(-\int_0^s\bar{\Gamma}^{X_k(r)}_k(r)dr\right)$ and formula \eqref{eq:FKr}.

Combining the last estimates with \eqref{conv:FN} yields
 \begin{equation} \label{eqn-barF-diff-bound-1}
 \begin{split}
   \E\left[{\bf1}_{t<\tT}\Big| \bar{\mathfrak{F}}^{N,\ell}_{k}(t)  - \bar{\mathfrak{F}}^{\ell}_{k}(t) \Big|\right] & \le 
     \ep_N+C\,\sum_{\ell'}\E \left[\int_0^{t\wedge\tT}|\bar{\Gamma}^{N,\ell'}_k(s)-\bar{\Gamma}^{\ell'}_k(s)|ds\right]\\&\quad +
 C \sum_{\ell'}  \frac{1}{N}\E\left[\sum_{j=1}^{S^N_k(0)}\sup_{0 \le r \le t\wedge\tT}  \Big|A^{N,\ell'}_{j,k}(r)  - A^{\ell'}_{j,k}(r) \Big| \right] \,. 
  \end{split}
 \end{equation}
 
 \begin{remark} \label{rem-proof-gamma0}
 In the case $\gamma=0$, instead of \eqref{eqn-Gamma-diff-bound}, we have the simpler bound
\begin{align*} 
\big|\bar\Gamma^{N,\ell}_{k}(t) - \bar\Gamma^{\ell}_{k}(t)\big|  
& \le  \beta^*\sum_{\ell',k'}   \Big| \bar{\mathfrak{F}}^{N,\ell'}_{k'}(t)  - \bar{\mathfrak{F}}^{\ell'}_{k'}(t) \Big| \,.  
\end{align*}
Hence combining this estimate and \eqref{eqn-barF-diff-bound-1}, 
by Gronwall's Lemma, we obtain 
\begin{align*}
\sum_{\ell,k} \E\left[{\bf1}_{t<\tT}\big|\bar{\Gamma}^{N,\ell}_{k}(t) - \bar{\Gamma}^{\ell}_{k}(t)\big|\right]\le \ep_N+C
 \sum_{\ell,k}  \frac{1}{N}\E\left[\sum_{j=1}^{S^N_k(0)}\sup_{0 \le r \le t\wedge\tT}  \Big|A^{N,\ell}_{j,k}(r)  - A^{\ell}_{j,k}(r) \Big| \right]\,.
\end{align*}
In that case, we do not need the estimate on $\bar{B}^{N,\ell}_k$ in Lemma \ref{le:BN>c}, nor the stopping time $\tT$, nor the estimates in the next three Lemmas to complete the proof.
\end{remark}

\bigskip

 It remains to consider the second term on the right of \eqref{eqn-Gamma-diff-bound}. 
 Observe that 
\begin{align} \label{eqn-barB-diff}
\frac{1}{(\bar{B}^{N,\ell'}_k(t))^\gamma}  - \frac{1}{(\bar{B}^{\ell'}_k(t))^\gamma} 
  =-\gamma \bigg(\int_0^1 \Big(u\bar{B}^{N,\ell'}_k(t) + (1-u) \bar{B}^{\ell'}_k(t)\Big)^{-\gamma-1} du \bigg)   \Big( \bar{B}^{N,\ell'}_k(t) - \bar{B}^{\ell'}_k(t)\Big). 
\end{align}
It is clear that on the event $\{t<\tT\}$, the integral on the right hand side is bounded by $(C^\ast_T)^{-\gamma-1}$.

Hence it follows from \eqref{eqn-Gamma-diff-bound}, \eqref{eqn-barF-diff-bound-1}, \eqref{eqn-barB-diff}  that, for all $ t \in [0,T] $,
\begin{align}\label{estimGamma}
\sum_{\ell,k} \E\left[{\bf1}_{t<\tT}\big|\bar\Gamma^{N,\ell}_{k}(t) - \bar\Gamma^{\ell}_{k}(t)\big|\right]&\le \ep_N+C
 \sum_{\ell,k}  \frac{1}{N}\E\left[\sum_{j=1}^{S^N_k(0)}\sup_{0 \le r \le t\wedge\tT}  \Big|A^{N,\ell}_{j,k}(r)  - A^{\ell}_{j,k}(r) \Big| \right]\nonumber \\
 &\qquad+C\sum_{\ell,k}\E\left[\int_0^{t\wedge\tT}|\bar{\Gamma}^{N,\ell}_k(s)-\bar{\Gamma}^{\ell}_k(s)|ds\right]  \\
 & \qquad 
 +C \sum_{\ell,k}\E\left[{\bf1}_{t<\tT}\left|\bar{B}^{N,\ell}_k(t) - \bar{B}^{\ell}_k(t)\right|\right]   \,.\nonumber
\end{align}
It will follow from the next three Lemmas that \eqref{estimGamma} holds without the last term. Hence from Gronwall's Lemma
\begin{align*}
\sum_{\ell,k} \E\left[{\bf1}_{t<\tT}\big|\bar\Gamma^{N,\ell}_{k}(t) - \bar\Gamma^{\ell}_{k}(t)\big|\right]\le \ep_N+C
 \sum_{\ell,k}  \frac{1}{N}\E\left[\sum_{j=1}^{S^N_k(0)}\sup_{0 \le r \le t\wedge\tT}  \Big|A^{N,\ell}_{j,k}(r)  - A^{\ell}_{j,k}(r) \Big| \right].
\end{align*}
The result follows by combining
 this last estimate with \eqref{eqn-A-jk-sup-bound} and Gronwall's Lemma.
\end{proof}

In the next lemmas, the same sequence $\ep_N$ and the constant $C$ may vary from one line to another.

\begin{lemma}\label{convS}
For any $T>0$, there exists $C>0$ and a sequence $\ep_N$ of positive numbers which tends to $0$ as $N\to\infty$, and such that for any $0\le t\le T$ and $k\in\sK$,
\begin{align*}
\sum_\ell\E\left[{\bf1}_{t<\tT}\left|\bar{S}^{N,\ell}_k(t)-\bar{S}^{\ell}_k(t)\right|\right]\le\ep_N+
\frac{C}{N}\sum_\ell \E\left[ \sup_{s\le t\wedge\tT}\sum_{j=1}^{S^N_k(0)}  \Big|A^{N,\ell}_{j,k}(s) - A^{\ell}_{j,k}(s) \Big|\right]\,.
\end{align*}
\end{lemma}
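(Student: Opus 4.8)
The plan is to derive an exact (semimartingale) identity for $\bar{S}^{N,\ell}_k$ from the balance equation \eqref{eq:SN}, subtract the corresponding limit equation (the first line of \eqref{eq:limitLLN}), and then close the resulting linear system by Gronwall's lemma, with the i.i.d.\ surrogates $A^\ell_{j,k}$ playing the role of the propagation-of-chaos approximation of the infection counts. Concretely, I would first divide \eqref{eq:SN} by $N$, write each migration Poisson process as its compensator plus a compensated (square-integrable) martingale, and use $\nu^{\ell,\ell}_{S,k}(s)=-\sum_{\ell'\neq\ell}\nu^{\ell,\ell'}_{S,k}(s)$ to recombine the in- and out-migration drifts, obtaining
\[
\bar{S}^{N,\ell}_k(t)=\bar{S}^{N,\ell}_k(0)-\frac1N\sum_{j=1}^{S^N_k(0)}A^{N,\ell}_{j,k}(t)+\sum_{\ell'=1}^L\int_0^t\nu^{\ell',\ell}_{S,k}(s)\,\bar{S}^{N,\ell'}_k(s)\,ds+M^{N,\ell}_{S,k}(t),
\]
where, since the migration intensities are of order $N$, the martingale $M^{N,\ell}_{S,k}$ has predictable quadratic variation of order $1/N$, so $\E\big[\sup_{t\le T}|M^{N,\ell}_{S,k}(t)|\big]\le C N^{-1/2}$ by Doob's inequality.

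Next I would subtract the first line of \eqref{eq:limitLLN}; writing $\delta^\ell(t):=\bar{S}^{N,\ell}_k(t)-\bar{S}^{\ell}_k(t)$, this gives
\[
\delta^\ell(t)=\big(\bar{S}^{N,\ell}_k(0)-\bar{S}^{\ell}_k(0)\big)-\Big(\tfrac1N\sum_{j}A^{N,\ell}_{j,k}(t)-\int_0^t\bar{S}^\ell_k(s)\bar{\Gamma}^\ell_k(s)\,ds\Big)+\sum_{\ell'}\int_0^t\nu^{\ell',\ell}_{S,k}(s)\,\delta^{\ell'}(s)\,ds+M^{N,\ell}_{S,k}(t),
\]
and I would split the middle term as $\tfrac1N\sum_j\big(A^{N,\ell}_{j,k}(t)-A^\ell_{j,k}(t)\big)+\rho^{N,\ell}_k(t)$ with $\rho^{N,\ell}_k(t):=\tfrac1N\sum_j A^\ell_{j,k}(t)-\int_0^t\bar{S}^\ell_k(s)\bar{\Gamma}^\ell_k(s)\,ds$. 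Multiplying by $\bone_{t<\tT}$, taking absolute values, summing over $\ell$, and using $\bone_{t<\tT}\le\bone_{s<\tT}$ for $s\le t$ together with the boundedness of the migration rates on $[0,T]$ and $|\delta^{\ell'}|\le1$, the quantity $g^N(t):=\bone_{t<\tT}\sum_\ell|\delta^\ell(t)|$ satisfies $g^N(t)\le G^N(t)+C\int_0^t g^N(s)\,ds$, where $G^N(t)$ is the sum of the initial-condition term, $\sum_\ell\tfrac1N\sum_j|A^{N,\ell}_{j,k}(t)-A^\ell_{j,k}(t)|$, $\sum_\ell|\rho^{N,\ell}_k(t)|$ and $\sum_\ell\sup_{r\le T}|M^{N,\ell}_{S,k}(r)|$. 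Gronwall's lemma then yields $g^N(t)\le e^{CT}\sup_{s\le t}G^N(s)$, and, taking expectations, I would bound $\E[\sup_{s\le t}G^N(s)]$ term by term: the initial-data term is a null sequence $\ep_N$ by the assumed convergence of the initial configuration, the martingale term is $\le CN^{-1/2}$, the contribution $\sup_{s\le t}\bone_{s<\tT}\big|\sum_j(A^{N,\ell}_{j,k}(s)-A^\ell_{j,k}(s))\big|$ is bounded by $\sup_{s\le t\wedge\tT}\sum_j|A^{N,\ell}_{j,k}(s)-A^\ell_{j,k}(s)|$ (which, divided by $N$ and summed over $\ell$, is exactly the right-hand side of the lemma), and $\E[\sup_{s\le T}|\rho^{N,\ell}_k(s)|]\to0$; collecting these bounds gives the assertion.

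The \emph{main obstacle} is the last of these bounds, namely $\E\big[\sup_{s\le T}|\rho^{N,\ell}_k(s)|\big]\to0$, i.e.\ that $\tfrac1N\sum_{j}A^\ell_{j,k}(\cdot)$ converges uniformly on $[0,T]$ to $\int_0^{\cdot}\bar{S}^\ell_k(s)\bar{\Gamma}^\ell_k(s)\,ds$. I would obtain this from the law of large numbers for i.i.d.\ random elements of $\bD$ (Rao \cite{rao1963law}), applied after grouping the initially susceptible individuals of group $k$ according to their initial patch, so that within each group the processes $A^\ell_{j,k}$ are i.i.d.; since the candidate limit is continuous and nondecreasing, convergence in the Skorokhod topology upgrades to uniform convergence on $[0,T]$, and bounded convergence then yields the $L^1$ statement. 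The identification of the limit as $\int_0^t\bar{S}^\ell_k(s)\bar{\Gamma}^\ell_k(s)\,ds$ is precisely the conditioning computation carried out in the proof of Lemma~\ref{le:EU} (conditioning on the migration path, using $\P(A^\ell_k(s)=0\mid X_k)=\exp(-\int_0^s\bar{\Gamma}^{X_k(r)}_k(r)\,dr)$ and the Feynman--Kac representation of $\bar{S}^\ell_k$ of Proposition~\ref{pro:FK}). Everything else is routine; in particular the stopping time $\tT$ of \eqref{stopB} enters this lemma only for bookkeeping, since no bound on the $N$-dependent force of infection $\bar{\Gamma}^{N,\ell}_k$ is needed here (such bounds, via Lemma~\ref{le:BN>c}, are only required in the companion estimate \eqref{estimGamma}).
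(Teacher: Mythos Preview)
Your proposal is correct and follows essentially the same route as the paper: subtract the prelimit and limit equations for $\bar S^{N,\ell}_k$, split the infection-count term via the i.i.d.\ surrogates $A^\ell_{j,k}$, identify the limit of $\frac1N\sum_j A^\ell_{j,k}(t)$ as $\int_0^t\bar S^\ell_k\bar\Gamma^\ell_k\,ds$ through the conditioning computation and the Feynman--Kac representation of Proposition~\ref{pro:FK}, and close by Gronwall. The only cosmetic difference is in the migration terms: the paper compares $\frac1N P^{\ell,\ell'}_{S,k}(N\!\int\nu\,\bar S^{N,\ell}_k)$ with $\frac1N P^{\ell,\ell'}_{S,k}(N\!\int\nu\,\bar S^{\ell}_k)$ (yielding the Gronwall integrand) and then uses the Poisson LLN, whereas you go straight to the compensator/martingale decomposition---both are standard and equivalent here.
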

\begin{proof}
It follows from Gronwall's Lemma that it suffices to show 
\begin{align*}
\E\left[{\bf1}_{t<\tT}\left|\bar{S}^{N,\ell}_k(t)-\bar{S}^{\ell}_k(t)\right|\right]\le
&\frac{1}{N} \E\left[ {\bf1}_{t<\tT}\sum_{j=1}^{S^N_k(0)}  \Big|A^{N,\ell}_{j,k}(t) - A^{\ell}_{j,k}(t) \Big|\right]\\
&\quad+C\sum_{\ell}\E\left[\int_0^{t\wedge\tT}\left|\bar{S}^{N,\ell}_k(s)-\bar{S}^{\ell}_k(s)\right|ds\right] +\ep_N\,.
\end{align*}
We first note that
\begin{align*}
\bar{S}^{N,\ell}_k(t)&=\bar{S}^{N,\ell}_k(0)-\frac{1}{N}\sum_{j=1}^{S^{N,\ell}_k(0)}A^{N,\ell}_{j,k}(t)
-\sum_{\ell'\neq \ell}\frac{1}{N}P^{\ell,\ell'}_{S,k}\left(N\int_0^t\nu^{\ell,\ell'}_{S,k}(s)\bar{S}^{N,\ell}_k(s)ds\right)\\
&\quad+\sum_{\ell'\neq \ell}\frac{1}{N}P^{\ell',\ell}_{S,k}\left(N\int_0^t\nu^{\ell',\ell}_{S,k}(s)\bar{S}^{N,\ell'}_k(s)ds\right)\,,
\\
\bar{S}^{\ell}_k(t)&=\bar{S}^{\ell}_k(0)-\int_0^t\bar{S}^{\ell}_k(s)\bar{\Gamma}^\ell_k(s)ds
+\sum_{\ell'=1}^L\int_0^t\nu^{\ell',\ell}_{S,k}(s)\bar{S}^{\ell'}_k(s)ds\,. 
\end{align*}
It is clear that as $N\to\infty$, the following convergences hold in $L^1(\Omega)$:
\begin{align*}
\bar{S}^{N,\ell}_k(0)&\to\bar{S}^{\ell}_k(0)\,,\\
\sum_{\ell'=1, \ell'\neq \ell}^L\frac{1}{N}P^{\ell,\ell'}_{S,k}\left(N\int_0^t\nu^{\ell,\ell'}_{S,k}(s)\bar{S}^{\ell}_k(s)ds\right)
&\to \sum_{\ell'=1, \ell'\neq \ell}^L\int_0^t\nu^{\ell,\ell'}_{S,k}(s)\bar{S}^{\ell}_k(s)ds\,,\\
 \sum_{\ell'=1, \ell'\neq \ell}^L\frac{1}{N}P^{\ell',\ell}_{S,k}\left(N\int_0^t\nu^{\ell',\ell}_{S,k}(s)\bar{S}^{\ell'}_k(s)ds\right)
 &\to\sum_{\ell'=1, \ell'\neq \ell}^L\int_0^t\nu^{\ell',\ell}_{S,k}(s)\bar{S}^{\ell'}_k(s)ds\,,
\end{align*}
and the following identity holds:
\[ -\sum_{\ell'\neq\ell}\int_0^t\nu^{\ell,\ell'}_{S,k}(s)\bar{S}^{\ell}_k(s)ds+\sum_{\ell'\neq \ell} \int_0^t\nu^{\ell',\ell}_{S,k}(s)\bar{S}^{\ell'}_k(s)ds=\sum_{\ell'=1}^L\int_0^t\nu^{\ell',\ell}_{S,k}(s)\bar{S}^{\ell'}_k(s)ds\,.\]
Moreover, for all $ \ell'\neq \ell $,
\begin{align*}
\E\Bigg[&{\bf1}_{t<\tT} \left|\frac{1}{N}P^{\ell,\ell'}_{S,k}\left(N\int_0^t\nu^{\ell,\ell'}_{S,k}(s)\bar{S}^{N,\ell}_k(s)ds\right)
-\frac{1}{N}P^{\ell,\ell'}_{S,k}\left(N\int_0^t\nu^{\ell,\ell'}_{S,k}(s)\bar{S}^{\ell}_k(s)ds\right)\right|\\
&+{\bf1}_{t<\tT} \left|\frac{1}{N}P^{\ell',\ell}_{S,k}\left(N\int_0^t\nu^{\ell',\ell}_{S,k}(s)\bar{S}^{N,\ell'}_k(s)ds\right)
-\frac{1}{N}P^{\ell',\ell}_{S,k}\left(N\int_0^t\nu^{\ell',\ell}_{S,k}(s)\bar{S}^{\ell'}_k(s)ds\right)\right|\Bigg]\\
&\le C\E\left[\int_0^{t\wedge \tT} \left|\bar{S}^{N,\ell}_k(s)-\bar{S}^{\ell}_k(s)\right|ds+
\int_0^{t\wedge \tT} \left|\bar{S}^{N,\ell'}_k(s)-\bar{S}^{\ell'}_k(s)\right|ds\right]\,.
\end{align*}
Now consider the difference 
\[ \Delta^{N,\ell}_k(t):= -\frac{1}{N}\sum_{j=1}^{S^N_k(0)}A^{N,\ell}_{j,k}(t)+\int_0^t\bar{S}^{\ell}_k(s)\bar{\Gamma}^\ell_k(s)ds\,.\]
We rewrite it as 
\begin{align*}
\Delta^{N,\ell}_k(t)&=\frac{1}{N}\sum_{j=1}^{S^N_k(0)}[A^{\ell}_{j,k}(t)-A^{N,\ell}_{j,k}(t)]+\int_0^t\bar{S}^{\ell}_k(s)\bar{\Gamma}^\ell_k(s)ds-\frac{1}{N}\sum_{j=1}^{S^N_k(0)}A^{\ell}_{j,k}(t)\,.
\end{align*}
It remains to show that, as $N\to\infty$,
\begin{equation*}\label{convAS}
\frac{1}{N}\sum_{j=1}^{S^N_k(0)}A^{\ell}_{j,k}(t)\to\int_0^t\bar{S}^{\ell}_k(s)\bar{\Gamma}^\ell_k(s)ds
\end{equation*} 
in $L^1(\Omega)$. We note that 
\begin{align*}
\frac{1}{N}\sum_{j=1}^{S^N_k(0)}A^{\ell}_{j,k}(t)&\to  \bar{S}_k(0) \E\left[A^\ell_k(t)\right]\\
&= \bar{S}_k(0) \int_0^t\E\left[{\bf1}_{X_k(s)=\ell}{\bf1}_{A_k(s)=0}\right]\bar{\Gamma}^\ell_k(s)ds\\
&=\bar{S}_k(0)\int_0^t\E\left[{\bf1}_{X_k(s)=\ell} \exp\left(-\int_0^s\bar{\Gamma}^{X_k(r)}_k(r)dr\right)\right]\bar{\Gamma}^\ell_k(s)ds\\
&=\int_0^t\bar{S}^{\ell}_k(s)\bar{\Gamma}^\ell_k(s)ds\,,
\end{align*}
where we have used successively the strong law of large numbers, the first line of \eqref{eq:McKV}, the fact that 
$\P(A_k(s)=0|X_k(\cdot))=\exp\left(-\int_0^s\bar{\Gamma}^{X_k(r)}_k(r)dr\right)$, and formula \eqref{eq:FKr} from Proposition \ref{pro:FK}.
\end{proof}

\begin{lemma}\label{convI}
For any $T>0$, there exists a sequence $\ep_N$ of positive numbers which tends to $0$ as $N\to\infty$, and such that for any $0\le t\le T$,  $k\in\sK$,
\begin{align*}
\sum_{\ell}\E\left[{\bf1}_{t<\tT}\left|\bar{I}^{N,\ell}_k(t)-\bar{I}^{\ell}_k(t)\right|\right]&\le\ep_N+
\frac{C}{N} \sum_\ell\E\left[ \sup_{s\le t\wedge\tT}\sum_{j=1}^{S^N_k(0)}  \Big|A^{N,\ell}_{j,k}(s) - A^{\ell}_{j,k}(s) \Big|\right]\\&\quad+C\sum_\ell\E\left[\int_0^{t\wedge\tT}|\bar{\Gamma}^{N,\ell}_k(r)-\bar{\Gamma}^{\ell}_k(r)|dr\right]\,.
\end{align*}
\end{lemma}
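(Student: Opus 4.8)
The plan is to compare \eqref{eq:IN}, divided by $N$, with the equation for $\bar{I}^\ell_k$ in \eqref{eq:limitLLN} term by term, and to close the estimate by Gronwall's Lemma applied to $g_k(t):=\sum_\ell\E[\bone_{t<\tT}|\bar{I}^{N,\ell}_k(t)-\bar{I}^\ell_k(t)|]$, exploiting as in the preceding lemmas that $\{t<\tT\}\subset\{s<\tT\}$ for $s\le t$. Four of the six terms in \eqref{eq:IN} are handled by arguments already at our disposal. The initial term $\bar{I}^{N,\ell}_k(0)-\bar{I}^\ell_k(0)$ contributes a vanishing sequence $\ep_N$ by the convergence of the initial data. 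For $N^{-1}\sum_j A^{N,\ell}_{j,k}(t)$, which must be matched with $\int_0^t\bar{S}^\ell_k(s)\bar{\Gamma}^\ell_k(s)\,ds$, we write it as $N^{-1}\sum_j(A^{N,\ell}_{j,k}(t)-A^\ell_{j,k}(t))+N^{-1}\sum_j A^\ell_{j,k}(t)$: the first piece is at most $N^{-1}\sup_{s\le t\wedge\tT}\sum_j|A^{N,\ell}_{j,k}(s)-A^\ell_{j,k}(s)|$ on $\{t<\tT\}$, and the second converges in $L^1$ to $\int_0^t\bar{S}^\ell_k(s)\bar{\Gamma}^\ell_k(s)\,ds$ by the law of large numbers together with Proposition \ref{pro:FK}, exactly as in \eqref{convAS}. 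The initially infected recovery term $N^{-1}\sum_{\ell'}\sum_i\bone_{\eta^{0}_{i,k}\le t}\bone_{Y^{0,\ell'}_{i,k}(\eta^{0}_{i,k})=\ell}$ converges in $L^1$ to $\sum_{\ell'}\bar{I}^{\ell'}_k(0)\int_0^t q_k^{\ell',\ell}(0,s)\,F^0_k(ds)$ by the law of large numbers, the summands being bounded, i.i.d.\ over $i$, and of common mean $\int_0^t q_k^{\ell',\ell}(0,u)\,F^0_k(du)$ (condition on $\eta^{0}_{i,k}$ and use the independence of the migration from the infectivity and the Markov transition function of $Y_k$). Finally, the migration terms $N^{-1}\sum_{\ell'\neq\ell}[P^{\ell',\ell}_{I,k}(N\int_0^t\nu^{\ell',\ell}_{I,k}(s)\bar{I}^{N,\ell'}_k(s)\,ds)-P^{\ell,\ell'}_{I,k}(N\int_0^t\nu^{\ell,\ell'}_{I,k}(s)\bar{I}^{N,\ell}_k(s)\,ds)]$ are matched with $\sum_{\ell'}\int_0^t\nu^{\ell',\ell}_{I,k}(s)\bar{I}^{\ell'}_k(s)\,ds$ by the identity $\nu^{\ell,\ell}_{I,k}=-\sum_{\ell'\neq\ell}\nu^{\ell,\ell'}_{I,k}$: the compensated Poisson parts are martingales whose quadratic variation is $O(1/N)$, hence $O(N^{-1/2})$ in $L^1$, while the difference of the drift parts is bounded in absolute value by $C\sum_{\ell'}\int_0^t|\bar{I}^{N,\ell'}_k(s)-\bar{I}^{\ell'}_k(s)|\,ds$, which after multiplication by $\bone_{t<\tT}$, expectation, and summation over $\ell$ is at most $C\int_0^t g_k(s)\,ds$.

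The remaining, and most delicate, term is the newly infected recovery term $N^{-1}\sum_j\bone_{\tau^N_{j,k}+\eta_{j,k}\le t}\sum_{\ell'}\bone_{X_{j,k}(\tau^N_{j,k})=\ell'}\bone_{Y^{\tau^N_{j,k},\ell'}_{j,k}(\tau^N_{j,k}+\eta_{j,k})=\ell}$, to be matched with $\sum_{\ell'}\int_0^t\int_0^{t-s}q_k^{\ell',\ell}(s,s+u)\,F_k(du)\,\bar{S}^{\ell'}_k(s)\bar{\Gamma}^{\ell'}_k(s)\,ds$. Writing $\bone_{\tau^N_{j,k}+\eta_{j,k}\le t}\bone_{X_{j,k}(\tau^N_{j,k})=\ell'}=\int_0^t\bone_{\eta_{j,k}\le t-s}\,dA^{N,\ell'}_{j,k}(s)$, I would express this term as a stochastic integral against the PRMs $Q^{\ell'}_{j,k}$, exactly as in \eqref{decomp-FN}, and split the integrand $\bone_{A^N_{j,k}(s^-)=0}\bone_{u\le\bar{\Gamma}^{N,\ell'}_k(s^-)}$ as the sum of its difference with $\bone_{A_{j,k}(s^-)=0}\bone_{u\le\bar{\Gamma}^{\ell'}_k(s^-)}$ and the latter. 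After taking expectations, the contribution of the difference is bounded, using $\sup_{s\le T,\ell,k}\bar{\Gamma}^\ell_k(s)\le C_T$ and $\bar{\Gamma}^{N,\ell'}_k(s)\le C_T$ on $\{s<\tT\}$ (from \eqref{est-barF} and \eqref{eqn-barB-lowerbound}) together with $|\bone_{A^N_{j,k}(s)=0}-\bone_{A_{j,k}(s)=0}|\le\sum_\ell|A^{N,\ell}_{j,k}(s)-A^\ell_{j,k}(s)|$, by $CN^{-1}\sum_\ell\E[\sup_{s\le t\wedge\tT}\sum_j|A^{N,\ell}_{j,k}(s)-A^\ell_{j,k}(s)|]+C\sum_\ell\E[\int_0^{t\wedge\tT}|\bar{\Gamma}^{N,\ell}_k(s)-\bar{\Gamma}^\ell_k(s)|\,ds]$, which are precisely the last two terms of the statement. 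The remaining i.i.d.\ expression $N^{-1}\sum_j\int_0^t\sum_{\ell'}\bone_{\eta_{j,k}\le t-s}\bone_{Y^{s,\ell'}_{j,k}(s+\eta_{j,k})=\ell}\,dA^{\ell'}_{j,k}(s)$ converges in $L^1$, by the law of large numbers and Proposition \ref{pro:FK}, to the claimed limit; this is the multi-patch analogue of the computation \eqref{eqn-convI-keystep}, where one conditions on the infection time $s$ and patch $\ell'$, uses that $\P(A_k(s)=0\mid X_k)=\exp(-\int_0^s\bar{\Gamma}^{X_k(r)}_k(r)\,dr)$ and that $\eta_{j,k},Y^{s,\ell'}_{j,k}$ are independent of $(X_{j,k},A_{j,k})$ — so that, by the Markov property of $Y_k$, the factor $\int_0^{t-s}q_k^{\ell',\ell}(s,s+u)\,F_k(du)$ appears — and finally invokes \eqref{eq:FKr} to turn $\E[\bone_{X_k(s)=\ell'}\bar{S}^{X_k(0)}_k(0)\exp(-\int_0^s\bar{\Gamma}^{X_k(r)}_k(r)\,dr)]$ into $\bar{S}^{\ell'}_k(s)$.

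Collecting the contributions of all six terms yields $g_k(t)\le h_k(t)+C\int_0^t g_k(s)\,ds$ for $0\le t\le T$, where $h_k$ equals the right-hand side of the Lemma and is nondecreasing in $t$; Gronwall's Lemma then gives $g_k(t)\le e^{Ct}h_k(t)$, which is the asserted bound (after absorbing $e^{CT}$ into the constants and the sequence $\ep_N$). The main obstacle is the newly infected recovery term: one must encode the joint dependence on the infection time, the infectious duration, and the post-infection migration, pass to the PRM representation to separate the discrepancies in the infection processes and in $\bar{\Gamma}$, and identify the i.i.d.\ limit through the Feynman--Kac formula, which is markedly more involved than the corresponding step \eqref{eqn-convI-keystep} in the homogeneous model.
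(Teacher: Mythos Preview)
Your proposal is correct and follows essentially the same route as the paper: reduce to a Gronwall inequality in $\sum_\ell\E[\bone_{t<\tT}|\bar I^{N,\ell}_k-\bar I^\ell_k|]$, dispose of the initial, $A$-, and migration terms as in Lemma~\ref{convS}, treat the initially infected recovery term by the LLN, and concentrate on the newly infected recovery term via a PRM representation, splitting off the $(A^N,\bar\Gamma^N)$ vs.\ $(A,\bar\Gamma)$ discrepancy and identifying the i.i.d.\ limit through the Feynman--Kac formula \eqref{eq:FKr}.

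The only methodological difference is in how the recovery time is encoded. The paper introduces an auxiliary PRM $\tilde Q^{\ell}_{j,k}$ on $\R_+^3$ with mean measure $ds\,du\,F_k(dr)$, obtained by marking each point of $Q^{\ell}_{j,k}$ with an independent $F_k$-distributed third coordinate, and then writes $\sI^{N,\ell}_{j,k}(t)$ as a triple integral against $\tilde Q$ (see \eqref{sIN}); the variable $r$ then plays the role of $\eta_{j,k}$ at the unique jump. You instead keep $\eta_{j,k}$ as a fixed external random variable (independent of $Q^{\ell}_{j,k}$) inside the integrand and integrate against the original two-dimensional $Q^{\ell}_{j,k}$. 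Both representations yield the same decomposition and the same bound for the difference term, and the same LLN/Feynman--Kac computation for the i.i.d.\ part. The paper's marked-PRM device makes the ``each integral is $0$ or $1$'' structure and the independence of the duration from the infection time slightly more transparent, while your formulation avoids constructing the auxiliary point process; neither buys anything substantive over the other.
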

\begin{proof}
Again, it suffices to show that
\begin{align*}
\E\left[{\bf1}_{t<\tT}\left|\bar{I}^{N,\ell}_k(t)-\bar{I}^{\ell}_k(t)\right|\right]&\le\ep_N+
\frac{C}{N} \E\left[{\bf1}_{t<\tT} \sum_{j=1}^{S^N_k(0)}\sum_{\ell'=1}^L \Big|A^{N,\ell'}_{j,k}(t) - A^{\ell'}_{j,k}(t) \Big| \right]\\
&\qquad+C\sum_{\ell'}\E\left[ \int_0^{t\wedge\tT}\left|\bar{I}^{N,\ell'}_k(s)-\bar{I}^{\ell'}_k(s)\right|ds\right] \\
& \qquad
+C\sum_\ell\E\left[ \int_0^{t\wedge\tT}|\bar{\Gamma}^{N,\ell}_k(r)-\bar{\Gamma}^{\ell}_k(r)|dr\right]\,.
\end{align*}
Four of the terms in the equation for $\bar{I}^{N,\ell}_k(t)$ (see \eqref{eq:IN}) are treated exactly as in the previous Lemma. Moreover, 
by the strong law of large numbers,
\[\frac{1}{N}\sum_{i=1}^{I^{N,\ell'}_k(0)}{\bf1}_{\eta_{-i,k}\le t}{\bf1}_{Y^{0,\ell'}_{i,k}(\eta_{-i,k})=\ell}
\to \bar{I}^{\ell'}_k(0) \int_0^t q_k^{\ell',\ell}(0,s)F^0_k(ds)\] 
 a.s. in $\bD$. It remains to consider the term
\begin{align*}
\sI^{N,\ell}_{k}(t):=\frac{1}{N}\sum_{j=1}^{S^{N}_{k}(0)} \bone_{\tau^{N}_{j, k}+ \eta_{j,k} \le t} \sum_{\ell'}\bone_{X_{j,k}(\tau^N_{j,k}) =\ell'}
 \bone_{Y^{\tau^{N}_{j, k},\ell'}_{j,k}\!\!(\tau^{N}_{j, k}+\eta_{j,k})=\ell} \,\,.
\end{align*}
For that sake, we introduce a new collection of i.i.d. PRMs $\tilde{Q}^\ell_{j,k}$ on $\R^3_+$, for
$k\in\sK$, $\ell\in\sL$ and $j\ge1$  with mean measure $dsduF_k(dr)$. The PRM  $\tilde{Q}^\ell_{j,k}$ 
is defined as follows. Let $\{s^{\ell,j,k}_i,u^{\ell,j,k}_i,\ i\ge1\}$ be some enumeration of the points of the 
PRM $Q^\ell_{j,k}$. Let $\{\eta_i,\ i\ge1\}$ be some sequence of i.i.d. r.v.'s which are globally independent 
of the PRM $Q^\ell_{j,k}$, and all have the distribution $F_k(dr)$. The PRM $\tilde{Q}$ on $\R_+^3$ is given as
\[ \tilde{Q}=\sum_{i=1}^\infty \delta_{s^{\ell,j,k}_i,u^{\ell,j,k}_i,\eta_i}\,.\] 
With this new PRM, we have 
\[ \sI^{N,\ell}_{k}(t)=\frac{1}{N}\sum_{j=1}^{S^{N}_{k}(0)}\sum_{\ell'}\int_0^t\int_0^\infty\int_0^{t-s}
{\bf1}_{A^N_{j,k}(s^-)=0}{\bf1}_{X_{j,k}(s)=\ell'}{\bf1}_{Y^{s,\ell'}_{j,k}(s+r)=\ell}{\bf1}_{u\le\bar{\Gamma}^{N,\ell'}_k(s^-)}\tilde{Q}^{\ell'}_{j,k}(ds,du,dr)\,.\]
Note that for each $j$ and $\ell'$, the integral is either $0$ or $1$, which will allow us to simplify the difference with a similar integral. We have
\begin{equation}\label{sIN}
\begin{split}
\sI^{N,\ell}_{k}(t) &=\frac{1}{N}\sum_{j=1}^{S^{N}_{k}(0)}\sum_{\ell'}\int_0^t\int_0^\infty\int_0^{t-s}
{\bf1}_{X_{j,k}(s)=\ell'}{\bf1}_{Y^{s,\ell'}_{j,k}(s+r)=\ell}\Big[{\bf1}_{A^N_{j,k}(s^-)=0}{\bf1}_{u\le\bar{\Gamma}^{N,\ell'}_k(s^-)}\\
&\qquad\qquad\qquad\qquad \qquad \qquad \qquad  -{\bf1}_{A_{j,k}(s^-)=0}{\bf1}_{u\le\bar{\Gamma}^{\ell'}_k(s^-)}\Big]\tilde{Q}^{\ell'}_{j,k}(ds,du,dr)\\
&\quad+\frac{1}{N}\sum_{j=1}^{S^{N}_{k}(0)}\sum_{\ell'}\int_0^t\int_0^\infty\int_0^{t-s}
{\bf1}_{A_{j,k}(s^-)=0}{\bf1}_{X_{j,k}(s)=\ell'}{\bf1}_{Y^{s,\ell'}_{j,k}(s+r)=\ell}{\bf1}_{u\le\bar{\Gamma}^{\ell'}_k(s^-)}\tilde{Q}^{\ell'}_{j,k}(ds,du,dr)\,.
\end{split}
\end{equation}
The expectation of the absolute value of the first term on the right of \eqref{sIN} evaluated at time $t\wedge\tT$ is bounded by 
\[\frac{1}{N}\E\left[ \sum_{j=1}^{S^{N}_{k}(0)}\sum_{\ell'}\sup_{0\le s\le t\wedge\tT}\Big|A^{N,\ell'}_{j,k}(s)-A^{\ell'}_{j,k}(s)\Big| \right]
+\sum_{\ell'}\E\left[ \int_0^{t\wedge\tT}\big|\bar{\Gamma}^{\ell'}_k(s)-\bar{\Gamma}^{N,\ell'}_k(s)\big|ds\right].\]

From the law of large numbers, the second term in \eqref{sIN} converges as $N\to\infty$, towards
\begin{align*}
 \bar{S}_k(0) \sum_{\ell'}\E&\left[\int_0^t\int_0^{t-s}{\bf1}_{A_{k}(s)=0}{\bf1}_{X_{k}(s)=\ell'}{\bf1}_{Y^{s,\ell'}_{k}(s+r)=\ell}\bar{\Gamma}^{\ell'}_k(s)dsF_k(dr)\right]\\
&=\sum_{\ell'}\int_0^t\int_0^{t-s}\bar{S}^{\ell'}_k(s)\bar{\Gamma}^{\ell'}_k(s)q^{\ell',\ell}_k(s,s+u)F_k(du)ds,
\end{align*}
where we have used the fact that $\P\left(A_{k}(s)=0|X_k\right)=\exp\left(-\int_0^s\bar{\Gamma}^{X_k(r)}_k(r)dr\right)$ and formula \eqref{eq:FKr}.
The result follows.
\end{proof}

\begin{lemma}\label{convR}
For any $T>0$, there exists a sequence $\ep_N$ of positive numbers which tends to $0$ as $N\to\infty$, and such that for  any $0\le t\le T$, and $k\in \sK$, 
\begin{align*}
\sum_{\ell}\E\left[{\bf1}_{t<\tT}\left|\bar{R}^{N,\ell}_k(t)-\bar{R}^{\ell}_k(t)\right|\right]&\le\ep_N+
\frac{C}{N} \sum_\ell\E\left[ \sup_{s\le t\wedge\tT}\sum_{j=1}^{S^N_k(0)}  \Big|A^{N,\ell}_{j,k}(s) - A^{\ell}_{j,k}(s) \Big|\right]\\&\qquad  +C\sum_\ell\E\left[ \int_0^{t\wedge\tT}|\bar{\Gamma}^{N,\ell}_k(r)-\bar{\Gamma}^{\ell}_k(r)|dr\right]\,.
\end{align*}
\end{lemma}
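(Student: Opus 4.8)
The plan is to mirror the proof of Lemma \ref{convI} almost line for line, exploiting the fact that the term $\sI^{N,\ell}_{j,k}(t)$ enters \eqref{eq:RN} with exactly the opposite sign to its appearance in \eqref{eq:IN}. First I would reduce, by Gronwall's Lemma applied to the vector $\big(\E[\bone_{t<\tT}|\bar{R}^{N,\ell}_k(t)-\bar{R}^{\ell}_k(t)|]\big)_{\ell\in\sL}$ and summation over $\ell$, to proving for each $\ell$ the estimate
\begin{align*}
\E\left[\bone_{t<\tT}\left|\bar{R}^{N,\ell}_k(t)-\bar{R}^{\ell}_k(t)\right|\right]&\le\ep_N+
\frac{C}{N} \E\left[\bone_{t<\tT} \sum_{j=1}^{S^N_k(0)}\sum_{\ell'=1}^L \Big|A^{N,\ell'}_{j,k}(t) - A^{\ell'}_{j,k}(t) \Big| \right]\\
&\qquad+C\sum_{\ell'}\E\left[ \int_0^{t\wedge\tT}\left|\bar{R}^{N,\ell'}_k(s)-\bar{R}^{\ell'}_k(s)\right|ds\right] \\
& \qquad
+C\sum_{\ell'}\E\left[ \int_0^{t\wedge\tT}\big|\bar{\Gamma}^{N,\ell'}_k(r)-\bar{\Gamma}^{\ell'}_k(r)\big|dr\right]\,,
\end{align*}
where the second line on the right is the term eliminated by Gronwall's Lemma.

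To establish this, I would decompose $N^{-1}$ times the right-hand side of \eqref{eq:RN} into four groups. The initial term $\bar{R}^{N,\ell}_k(0)\to\bar{R}^\ell_k(0)$ in $L^1(\Omega)$ by hypothesis. The contribution of the initially infected individuals, $N^{-1}\sum_{\ell'}\sum_{i=1}^{I^{N,\ell'}_k(0)}\bone_{\eta^0_{i,k}\le t}\bone_{Y^{0,\ell'}_{i,k}(\eta^0_{i,k})=\ell}$, converges in $L^1(\Omega)$ by the strong law of large numbers for i.i.d.\ elements (cf.\ \cite{rao1963law}) to $\sum_{\ell'}\bar{I}^{\ell'}_k(0)\int_0^t q^{\ell',\ell}_k(0,s)F^0_k(ds)$, i.e.\ the matching term in \eqref{eq:limitLLN}. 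The two recovered-migration terms I would handle exactly as the susceptible-migration terms in the proof of Lemma \ref{convS}: writing $N^{-1}P(Nx)=x+N^{-1}\bar P(Nx)$ with $\bar P$ compensated, the compensated part is a square-integrable martingale of $L^1(\Omega)$-norm $O(N^{-1/2})$ on $[0,T]$, the drift parts are Lipschitz in $\bar{R}^{N,\ell}_k$ and $\bar{R}^{N,\ell'}_k$ and produce the term $C\sum_{\ell'}\E[\int_0^{t\wedge\tT}|\bar{R}^{N,\ell'}_k-\bar{R}^{\ell'}_k|]$, and the identity $-\sum_{\ell'\neq\ell}\int_0^t\nu^{\ell,\ell'}_{R,k}\bar{R}^\ell_k+\sum_{\ell'\neq\ell}\int_0^t\nu^{\ell',\ell}_{R,k}\bar{R}^{\ell'}_k=\sum_{\ell'}\int_0^t\nu^{\ell',\ell}_{R,k}\bar{R}^{\ell'}_k$ rebuilds the migration term of \eqref{eq:limitLLN}.

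The only genuinely substantive group is the term $\sI^{N,\ell}_{j,k}(t)$, which I would treat verbatim as in Lemma \ref{convI}: introduce the auxiliary i.i.d.\ PRMs $\tilde{Q}^\ell_{j,k}$ on $\R^3_+$ with mean measure $ds\,du\,F_k(dr)$ built from the infection points of $Q^\ell_{j,k}$ and the globally independent infected periods $\eta_{j,k}$; split $\sI^{N,\ell}_{j,k}(t)$ as in \eqref{sIN}; bound the expectation of the absolute value of the first piece at time $t\wedge\tT$ (using $\bar{\Gamma}^{\ell'}_k\le C_T$, $\lambda_{j,k}\le\lambda^\ast$, and the fact that each summand is $0$ or $1$) by
\[\frac{1}{N}\E\left[\sum_{j=1}^{S^N_k(0)}\sum_{\ell'}\sup_{0\le s\le t\wedge\tT}\big|A^{N,\ell'}_{j,k}(s)-A^{\ell'}_{j,k}(s)\big|\right]+\sum_{\ell'}\E\left[\int_0^{t\wedge\tT}\big|\bar{\Gamma}^{\ell'}_k(s)-\bar{\Gamma}^{N,\ell'}_k(s)\big|ds\right];\]
and identify the $L^1(\Omega)$-limit of the second piece, using the law of large numbers together with $\P(A_k(s)=0\mid X_k)=\exp(-\int_0^s\bar{\Gamma}^{X_k(r)}_k(r)dr)$ and the Feynman--Kac formula \eqref{eq:FKr} of Proposition \ref{pro:FK}, as $\sum_{\ell'}\int_0^t\int_0^{t-s}\bar{S}^{\ell'}_k(s)\bar{\Gamma}^{\ell'}_k(s)q^{\ell',\ell}_k(s,s+u)F_k(du)\,ds$, once more the matching term of \eqref{eq:limitLLN}. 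Assembling the four groups gives the displayed estimate, and Gronwall's Lemma over $\ell\in\sL$ concludes. I do not expect a real obstacle here; the proof is a direct transcription of that of Lemma \ref{convI}. The single point to keep in mind --- as there --- is to carry the indicator $\bone_{t<\tT}$ throughout, so that the lower bound $\bar{B}^{N,\ell}_k(t)\ge C^\ast_T$ valid on $\{t<\tT\}$ may be invoked when estimating $\bar{\Gamma}^{N,\ell}_k$; removing this truncation at the end costs only $O(N^{-1/2})$ by Lemma \ref{le:BN>c}.
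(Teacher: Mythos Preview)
Your proposal is correct and follows exactly the approach the paper intends: the paper's own proof consists of the single sentence ``The proof of this Lemma is very similar to that of the previous one, as the reader can easily verify,'' and your write-up is precisely that verification, transcribing the argument of Lemma~\ref{convI} with the migration terms handled as in Lemma~\ref{convS}. One small remark: your closing sentence about removing the truncation $\bone_{t<\tT}$ via Lemma~\ref{le:BN>c} is unnecessary here, since the statement of Lemma~\ref{convR} already carries that indicator; the truncation is only lifted later, in the proof of Lemma~\ref{lem-An-Ai-diffbound}.
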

The proof of this Lemma is very similar to that of the previous one, as the reader can easily verify.

\subsection{Completing the proof of Theorem \ref{thm-LLN}} \label{sec-completing}

It follows from the above Lemmas that for each $t>0$, $(\bar{S}^{N,\ell}_k(t), \bar{\mathfrak{F}}^{\ell}_k(t), \bar{I}^{N,\ell}_k(t),\bar{R}^{N,\ell}_k(t))$ converges in probability as $N\to\infty$ towards $(\bar{S}^{\ell}_k(t), \bar{\mathfrak{F}}^{N,\ell}_k(t), \bar{I}^{\ell}_k(t),\bar{R}^{\ell}_k(t))$. It remains to prove that the convergences hold in $\bD$.

We first consider $\bar{\mathfrak{F}}^{N,\ell}_k$. Consider the right hand side of \eqref{conv:FN}. The first term tends to $0$ a.s., locally uniformly in $t$, thanks to Theorem 1 in \cite{rao1963law}.  
Concerning the second term on the right of \eqref{conv:FN}, its convergence follows from the decomposition \eqref{decomp-FN}. The first term on the right 
of \eqref{decomp-FN} converges to $0$ in probability locally uniformly in $t$, thanks to \eqref{estimsupFN} and the rest of the proof of Lemma \ref{lem-An-Ai-diffbound}, while the locally uniform convergence in $t$ in probability of the second term follows again from Theorem 1 in \cite{rao1963law}.

We now establish the convergence in $\bD$ of the other quantities. 
We shall next use repeatedly Dini's theorem, which implies that a sequence of increasing functions which converges pointwise to a continuous function, converges in fact locally uniformly. This applies to random functions which converge in probability, since convergence in probability is equivalent to the fact that from any subsequence, one can extract a further subsequence which converges a.s. Also note that at most one limit term in each equation is discontinuous, so we will have no difficulty in adding convergences in $\bD$.

We next consider the process  $\bar{S}^{N,\ell}_k(t)$. 
Let us first discuss the Poisson terms, which are of the form
\[ \frac{1}{N}P^{\ell,\ell'}_{S,k}\left(\int_0^t\nu^{\ell,\ell'}_{S,k}(s) {S}^{N,\ell}_k(s)ds\right)
=\frac{1}{N}P^{\ell,\ell'}_{S,k}\left(N\int_0^t\nu^{\ell,\ell'}_{S,k}(s) \bar{S}^{N,\ell}_k(s)ds\right)\,,\]
which are non-decreasing and from the LLN for Poisson processes converge in probability,  towards the continuous function
$\int_0^t\nu^{\ell,\ell'}_{S,k}(s) \bar{S}^{\ell}_k(s)ds$. Hence, from the second Dini theorem, the convergence is locally uniform in $t$.

We finally need to consider the term from \eqref{eq:SN}:
\begin{align*}
\sum_{j=1}^{S^N_k(0)}A^{N,\ell}_{j,k}(t)&=\sum_{j=1}^{S^N_k(0)}\int_0^t\int_0^\infty{\bf1}_{A^{N}_{j,k}(s^-)=0}{\bf1}_{X_{j,k}(s)=\ell}{\bf1}_{u\le \bar\Gamma^{N,\ell}_{k}(s^-)} Q^\ell_{j,k}(ds,du)\,. 
\end{align*}
There exists a standard PRM $Q^\ell_{k}(ds,du)$ on $\R_+^2$ such that
\begin{align*}
\frac{1}{N}\sum_{j=1}^{S^N_k(0)}A^{N,\ell}_{j,k}(t)&=\frac{1}{N}\sum_{j=1}^{S^N_k(0)}\int_0^t\int_0^\infty{\bf1}_{u\le {\bf1}_{A^{N}_{j,k}(s^-)=0}{\bf1}_{X_{j,k}(s)=\ell}\bar\Gamma^{N,\ell}_{k}(s^-)} Q^\ell_{j,k}(ds,du)\\
&=\frac{1}{N}\int_0^t\int_0^\infty{\bf1}_{u\le {S}^{N,\ell}_k(s^-)\bar\Gamma^{N,\ell}_{k}(s^-)} Q^\ell_{k}(ds,du)\\
&=\int_0^t\bar{S}^{N,\ell}_k(s)\bar\Gamma^{N,\ell}_{k}(s)ds
+\frac{1}{N}\int_0^t\int_0^\infty{\bf1}_{u\le {S}^{N,\ell}_k(s^-)\bar\Gamma^{N,\ell}_{k}(s^-)} \bar{Q}^\ell_{k}(ds,du),
\end{align*}
where we have used the fact that $S^{N,\ell}_k(s)=\sum_{j=1}^{S^N_k(0)}{\bf1}_{A^{N}_{j,k}(s)=0}{\bf1}_{X_{j,k}(s)=\ell}$, and $\bar{Q}^\ell_{k}(ds,du)=Q^\ell_{k}(ds,du)-dsdu$.
It follows from Lemmas \ref{convS}, \ref{lem-An-Ai-diffbound} and \ref{le:BN>c},   that the first term on the right converges locally uniformly in $t$ in probability towards $\int_0^t\bar{S}^{\ell}_k(s)\bar\Gamma^{\ell}_{k}(s)ds$, while the second term is a martingale which converges locally uniformly in $t$ towards $0$ in probability.

We next consider the process $\bar{I}^{N,\ell}_k(t)$. There are two new terms in \eqref{eq:IN}, compared to \eqref{eq:SN}. The first one is
\[\sum_{\ell'=1}^L \sum_{i=1}^{I^{N,\ell'}_k(0)} \bone_{\eta^{0}_{i,k} \le t} \bone_{Y^{0,\ell'}_{i,k}(\eta^{0}_{i,k}) =\ell} \,\,,\]
and the second one
\[ \sum_{j=1}^{S^{N}_{k}(0)} \bone_{\tau^{N}_{j, k}+ \eta_{j,k} \le t} \sum_{\ell'}\bone_{X_{j,k}(\tau^N_{j,k}) =\ell'}
 \bone_{Y^{\tau^{N}_{j, k},\ell'}_{j,k}\!\!(\tau^{N}_{j, k}+\eta_{j,k})=\ell}\,.  \]
 It follows from the law of large numbers in $\bD$ (see, e.g., \cite{rao1963law}) that the first term
 converges in probability in $\bD$ towards
 \[\sum_{\ell'=1}^L \bar{I}^{\ell'}_k(0) \int_0^t  q_k^{\ell', \ell}(0,s)  F^0_{k}(ds)\,\,.\]
It remains to reconsider the argument used to treat $\sI^{N,\ell}_{j,k}(t)$ in the proof of Lemma \ref{convI}. 
The uniformity in $t$ of the convergence in probability to $0$ of the first term on the right hand side of 
\eqref{sIN} is rather obvious. Concerning the second term, the uniformity in $t$ of the convergence will follow from Dini's theorem, if we show that the mapping 
\[t\mapsto G(t):=\int_0^t\int_0^{t-s}\bar{S}^{\ell'}_k(s)\bar{\Gamma}^{\ell'}_k(s)q^{\ell',\ell}_k(s,s+u)F_k(du)ds\]
is continuous. But for $t'<t$, if $g(s,u):=\bar{S}^{\ell'}_k(s)\bar{\Gamma}^{\ell'}_k(s)q^{\ell',\ell}_k(s,s+u)$,
we have with $C:=KL\lambda^\ast\beta^\ast$,
\begin{align*}
G(t)-G(t')&=\int_{\R_+^2}{\bf1}_{t'<s+u\le t}g(s,u)F_k(du)ds\\
&\le C \int_{\R_+^2}{\bf1}_{t'<s+u\le t}F_k(du)ds\\
&\le C (t-t')\,,
\end{align*}
where the last inequality follows by integrating first with respect to $ s $.

\section{Appendix}
The aim of this section is to establish the following Lemma.
\begin{lemma}\label{le:continuity}
If $h\in\bD$, $g$ is measurable from $\R_+$ into itself and for some $C>0$, $0\le h(t)\le C$, $0\le g(t)\le C$ for all $t\ge0$, then 
\[ t\mapsto \int_0^t h(t-s)g(s)ds\]
is continuous.
\end{lemma}
\begin{proof}
Let $t_n\to t$ as $n\to\infty$. 
\begin{align*}
& \left|\int_0^t h(t-s)g(s)ds-\int_0^{t_n} h(t_n-s)g(s)ds\right| \\
&\le\int_{t\wedge t_n}^{t\vee t_n}h(t\vee t_n-s)g(s)ds
+\int_0^t|h(t-s)-h(t_n-s)|g(s){\bf1}_{s\le t_n}ds\,.
\end{align*}
The first term on the right is bounded by $C^2|t-t_n|$, while the integrand in the second term is bounded 
and converges to $0$ $ds$ a.e.
\end{proof}

This Lemma is used above in conjunction with the following remark. If $(f_n,g_n)\to(f,g)$ in $\bD^2$ and $g$ is continuous, then $f_n+g_n\to f+g$ in $\bD$. Indeed, for any $T>0$, let $\lambda_{n,T}$ be the time change from $[0,T]$ into itself, which is such that $\sup_{0\le t\le T}|\lambda_{n,T}(t)-t|\to0$ and 
$\sup_{0\le t\le T}|f_n\circ\lambda_{n,T}(t)-f(t)|\to0$. Since $g$ is continuous, $\sup_{0\le t\le T}|g_n(t)-g(t)|\to0$, and also $\sup_{0\le t\le T}|g_n\circ\lambda_{n,T}(t)-g(t)|\to0$, and we conclude that
$\sup_{0\le t\le T}|(f_n+g_n)\circ\lambda_{n,T}(t)-f(t)+g(t)|\to0$, which implies the result. Note that if both $f$ and $g$ are discontinuous and have a common jump, the two convergences may involve two incompatible time changes, and the sum may not converge in $\bD$. 

\bibliographystyle{plain}

\bibliography{Epidemic-Age,biblio_raphael}

\end{document}